\numberwithin{equation}{section}
\newtheorem{theorem}{Theorem}[section]
\newtheorem{lemma}[theorem]{Lemma}
\newtheorem{corollary}[theorem]{Corollary}
\newtheorem{proposition}[theorem]{Proposition}
\theoremstyle{definition}
\newtheorem{definition}[theorem]{Definition}
\newtheorem*{definition*}{Definition}
\newtheorem{example}[theorem]{Example}
\newtheorem{conjecture}[theorem]{Conjecture}
\theoremstyle{remark}
\newtheorem{remark}[theorem]{Remark}
\newtheorem*{remark*}{Remark}
\title{Criteria for apwenian sequences}
\author{Yingjun Guo}
\address[Y.-J. Guo]{College of Science, Huazhong Agricultural University, 430070 Wuhan, China}
\email{guoyingjun2005@126.com}
\author{Guoniu Han}
\address[G.-N. Han]{Institute de Recherche Math\'{e}matique Avanc\'{e}e, Universit\'{e} de Strasbourg et CNRS, 7 rue Ren\'{e} Descartes, 67084 Strasbourg, France}
\email{guoniu.han@unistra.fr}
\author{Wen Wu}
\thanks{Wen Wu is the corresponding author.}
\address[W. Wu]{School of Mathematics, South China University of Technology, 510640 Guangzhou,  China}
\email[Corresponding author]{wuwen@scut.edu.cn}
\date{}
\keywords{Hankel determinants, Jacobi continued fractions, substitution, automatic sequences}
\subjclass[2010]{11C20, 11B50, 11B85}
\begin{document}
\begin{abstract}
In 1998, Allouche, Peyri\`{e}re, Wen and Wen showed that the Hankel determinant $H_n$ of the Thue-Morse sequence over $\{-1,1\}$ satisfies $H_n/2^{n-1}\equiv 1~(\mathrm{mod}~2)$ for all $n\geq 1$. Inspired by this result, Fu and Han introduced \emph{apwenian} sequences over $\{-1,1\}$, namely, $\pm 1$ sequences whose Hankel determinants satisfy $H_n/2^{n-1}\equiv 1~(\mathrm{mod}~2)$ for all $n\geq 1$, and proved with computer assistance that a few sequences are apwenian. In this paper, we obtain an easy to check criterion for apwenian sequences, which allows us to determine all apwenian sequences that are fixed points of substitutions of constant length. Let  $f(z)$ be the generating functions of such apwenian sequences. We show that for all integer $b\ge 2$ with $f(1/b)\neq 0$, the real number $f(1/b)$ is transcendental and its irrationality exponent is equal to $2$.
	
Besides, we also derive a criterion for $0$-$1$ apwenian sequences whose Hankel determinants satisfy $H_n\equiv 1~(\mathrm{mod}~2)$ for all $n\geq 1$. We find that the only $0$-$1$ apwenian sequence, among all fixed points of substitutions of constant length, is the period-doubling sequence. Various examples of apwenian sequences given by substitutions with projection are also provided. Furthermore, we prove that all Sturmian sequences over $\{-1,1\}$ or $\{0,1\}$ are not apwenian. And  we conjecture that fixed points of substitution of non-constant length over $\{-1,1\}$ or $\{0,1\}$ can not be apwenian.
\end{abstract}
\maketitle

\setcounter{tocdepth}{1}
\tableofcontents

\goodbreak

\section{Introduction} % <<<
Let $\mathbf{a}=a_0a_1a_2\dots$ be a sequence over a field $\mathbb{F}$ and let $f(z)=\sum_{n=0}^{\infty}a_nz^{n}$ be its generating function. The Hankel determinant of the sequence $\mathbf{a}$ (or the formal power series $f$) of order $n$ $(n\geq 1)$ is
\begin{align*}
H_n(\mathbf{a})
& = \left|\begin{matrix}
a_{0} & a_{1} & \cdots & a_{n-1}\\
a_{1} & a_{2} & \cdots & a_{n}\\
\vdots & \vdots & \ddots & \vdots\\
a_{n-1} &  a_{n} & \cdots & a_{2n-2}
\end{matrix}\right| = \det(a_{i+j})_{0\leq i,j\leq n-1}.
\end{align*}
Finding rational approximations for $f(z)$ is strongly related to the Hankel determinants of $f$. 
Let $p$ and $q$ be nonnegative integers. The Pad\'e approximant $[p/q]_f(z)$ of $f(z)$ is a rational function $P(z)/Q(z)$ with $P(z),\,Q(z)\in\mathbb{F}[z]$, $Q(0)=1$ and $\deg P(z)\leq p,\, \deg Q(z)\leq q$ such that \[f(z)-\frac{P(z)}{Q(z)}=O(z^{p+q+1}).\] 
Actually, the non-vanishing of $H_n(\mathbf{a})$ guarantees the existence of the Pad\'e approximant $[(n-1)/n]_f(z)$ (see \cite[p. 34-36]{Bre80}) and \[f(z)-[(n-1)/n]_f(z)=\frac{H_{n+1}(\mathbf{a})}{H_n(\mathbf{a})}z^{2n} + O(z^{2n+1}).\] 
Let $b$ be a non-zero integer such that $f(1/b)$ converges. Then $[(n-1)/n]_{f}(1/b)$ yields a rational approximation for the real number $f(1/b)$, i.e.
\[\bigl|f(1/b)-[(n-1)/n]_{f}(1/b)\bigr|\leq \frac{c(n)}{b^{2n}}.\]
However, sometimes $c(n)$ may become even bigger than $b^{2n}$. Therefore Pad\'e approximants of $f(z)$ do not automatically give us good approximations of $f(1/b)$. But in some case (like in the case of infinite products, see for example \cite{D19,Bu11,BHYW16}) they do. 

When $f(z)$ satisfies certain algebraic equations, in a sequence of works, the Hankel determinants of $f(z)$ have been shown to be an efficient tool in estimating the irrationality exponent of $f(1/b)$. Based on several Pad\'{e} approximants, Adamczewski and Rivoal \cite{AR09} estimated the irrationality exponents for some automatic real numbers  using Mahler's method. In 2011, Bugeaud \cite{Bu11} proved that the irrationality exponent of the Thue-Morse number is equal to $2$. In his proof, one important requirement is that all the Hankel determinants of the Thue-Morse sequence on $\{-1,1\}$ are non-vanishing. This result was obtained in 1998 by Allouche, Peyri\`{e}re, Wen and Wen \cite{APWW98}. Bugeaud's method works well for degree $2$ Mahler functions. The remaining difficulty is to calculate the Hankel determinants. Coons \cite{Co13} proved that the irrationality exponent of the sum of the reciprocals of the Fermat numbers is 2. Guo, Wen and Wu \cite{GWW14} verified that the irrationality exponents of the regular paper-folding numbers are $2$. Wen and Wu \cite{WW14} showed that the irrationality exponents of the Cantor numbers are also $2$. The idea of evaluating the Hankel determinants in these works are the same as in \cite{APWW98}. Han \cite{Han16} proved that
the Hankel determinants of a large family of sequences are non-zero by using
the Hankel continued fractions. Using Han's result, Bugeaud, Han, Wen and Yao \cite{BHYW16} estimated irrationality exponents of $f(1/b)$ for a large class of Mahler functions $f(z)$, provided that the distribution of indices at which Hankel determinants of $f(z)$ do not vanish is known. For a class of Mahler functions $f(z)$,  Badziahin \cite{DB19} developed a useful theorem which can be used to compute the exact value of the irrationality exponent for $f(b)$ as soon as the continued fraction for the corresponding Mahler function is known. This improved the result of Bugeaud, Han, Wen and Yao \cite{BHYW16}. 

\subsection{$\pm 1$ apwenian sequences}
In the seminal work \cite{APWW98}, Allouche, Peyri\`{e}re, Wen and Wen showed that the Hankel determinants of the Thue-Morse sequence $\mathbf{t}$ over $\{-1,1\}$ satisfy  for all $n\geq 1$,
\[H_{n}(\mathbf{t})/2^{n-1}\equiv 1\quad (\mathrm{mod}~2).\]
\begin{definition*}[$\pm 1$ Apwenian sequence]
A sequence $\mathbf{d}=d_0d_1d_2\cdots \in\{-1,1\}^{\infty}$ is called an \emph{apwenian} sequence if for all $n\geq 1$, \[\frac{H_{n}(\mathbf{d})}{2^{n-1}}\equiv 1 \quad (\mathrm{mod}~2).\]
When $\mathbf{d}\in\{-1,1\}^{\infty}$ is apwenian, then we also say that its generating function $f(z)$ is \emph{apwenian}.
\end{definition*}

\begin{remark*}
The apwenian sequences occur in pairs. Namely, if $\mathbf{d}$ is apwenian, then apparently, $-\mathbf{d}$ is also apwenian. In the sequel, we only focus on the apwenian sequences starting with $1$.
\end{remark*}

Fu and Han \cite{FH16} introduced the above \emph{apwenian} sequences in honour of the authors of \cite{APWW98},
and investigated Hankel determinants for the formal power series
\begin{equation}\label{eq:fh}
f_p(z)=\prod_{i=0}^{\infty}P(z^{p^{i}})
\end{equation}
where $p\geq 2$ is an integer, $P(z)=v_0+v_{1}z+\cdots v_{p-1}z^{p-1}$, $v_0=1$ and $v_{i}\in\{-1,1\}$ for all $i=1,\ldots,p-1$. They checked for prime numbers $p\leq 17$ with computer assistance and found that the apwenian series satisfying \eqref{eq:fh} are quite rare:
\[
\begin{array}{c|*{7}{c}}
p & 2 & 3 & 5 & 7 & 11 & 13 & 17\\
\hline
N_p^{\prime} & 1 & 1 & 1 & 0 & 1 & 1 & 2
\end{array}
\]
where $N_p^{\prime}$ is the number of apwenian series satisfying \eqref{eq:fh} and $v_1=-1$. They conjectured in \cite{FH16} that $N_{19}^{\prime}=1$.
In this paper, we obtain an easy to check criterion for apwenian sequences,
	which allows us to
	determine all apwenian sequences that are fixed points of substitutions of constant length.

\begin{theorem}[$\pm1$ Criterion]\label{thm:A}
Let $\mathbf{d}=d_0d_1d_2\cdots \in\{-1,1\}^{\infty}$. Then $\mathbf{d}$ is apwenian if and only if
\begin{equation}\label{eq:-01}
\forall\, n\geq 0,\quad \frac{d_n+d_{n+1}-d_{2n+1}-d_{2n+2}}{2} \equiv 1\pmod{2}.
\end{equation}
\end{theorem}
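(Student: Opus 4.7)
The plan is to recast both the apwenian condition and the criterion as parallel equalities in $\mathbb{F}_2$ and prove their equivalence by induction on $n$. First I would introduce the difference indicator $\delta_k := \tfrac{1}{2}(1 - d_k d_{k+1}) = \mathbf{1}[d_k \neq d_{k+1}] \in \{0,1\}$; the criterion \eqref{eq:-01} then reads $\delta_n + \delta_{2n+1} \equiv 1 \pmod 2$ for all $n \geq 0$. On the Hankel side, I would perform the row operations $R_i \to R_i - R_{i-1}$ for $i = 1, \ldots, n-1$ (referring to the original rows), which preserve the determinant and render every entry of rows $1, \ldots, n-1$ divisible by $2$. Factoring out yields $H_n(\mathbf{d}) = 2^{n-1} \det N_n$, and reducing $N_n$ modulo $2$ produces a matrix with an all-ones top row and subsequent entries equal to the $\delta_k$'s; thus $H_n/2^{n-1} \equiv 1 \pmod 2$ becomes $\det N_n \equiv 1 \pmod 2$ in $\mathbb{F}_2$.

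Next I would prove the equivalence inductively on $n$ via the sharper claim: assuming $H_m/2^{m-1}$ is odd for all $m \leq n+1$ (equivalently, the criterion holds at indices $0, 1, \ldots, n-1$), $H_{n+2}/2^{n+1}$ is odd iff the criterion holds at index $n$. This indexing is natural, since both quantities first involve the entries $d_{2n+1}$ and $d_{2n+2}$. The natural tool is the Jacobi continued fraction expansion $f(z) = 1/(1 - b_0 z - \lambda_1 z^2/(1 - b_1 z - \cdots))$, which exists under the inductive hypothesis because all Hankel determinants so far are non-zero. Via Flajolet's formula $H_n = \prod_{k=1}^{n-1} \lambda_k^{n-k}$, apwenian-ness translates into $\lambda_1 \equiv 2 \pmod 4$ and $\lambda_k \equiv 1 \pmod 2$ for $k \geq 2$, and the three-term recurrence for the associated orthogonal polynomials expresses $\lambda_{k+1}$ inductively as an explicit rational function of $d_0, \ldots, d_{2k+2}$ given $\lambda_1, \ldots, \lambda_k$.

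The main obstacle will be showing that, under the inductive hypothesis, the relevant $2$-adic parity of $\lambda_{k+1}$ (namely $\lambda_1 \equiv 2 \pmod 4$ when $k = 0$, and $\lambda_{k+1}$ odd when $k \geq 1$) depends only on the four entries $d_k, d_{k+1}, d_{2k+1}, d_{2k+2}$ and reduces to $(d_k + d_{k+1} - d_{2k+1} - d_{2k+2})/2 \pmod 2$. This will require a delicate computation, likely via the Desnanot--Jacobi (Sylvester) identity for Hankel minors combined with the observation that, modulo $2$, the Hankel matrix of any $\pm 1$ sequence reduces to the rank-one all-ones matrix. The key technical point is verifying that all ``higher-order'' contributions to $\lambda_{k+1}$ collapse modulo $2$ under the previously established apwenian conditions, leaving only the simple four-term expression matching \eqref{eq:-01}.
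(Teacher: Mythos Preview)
Your plan shares the J-fraction backbone with the paper but diverges at the key inductive step, and that step is where the real content lies.

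The paper does not track $2$-adic valuations of the $\lambda_k$ attached to $\mathbf d$. Instead it first reduces to a $0$-$1$ Hankel determinant: Lemma~\ref{lem:01+-1} shows $H_n(\mathbf d)/2^{n-1}\equiv H_{n-1}(\mathbf c)\pmod 2$ for the sequence $c_i\equiv\tfrac{d_i-d_{i+2}}{2}\pmod 2$. Your row-operation step is half of this reduction; if you follow your $R_i\to R_i-R_{i-1}$ with the column operations $C_j\to C_j-C_{j-1}$ and expand along the resulting top row $(1,0,\ldots,0)$, you land exactly on $H_{n-1}(\mathbf c)$ over $\mathbb F_2$, since $\delta_k+\delta_{k+1}\equiv c_k\pmod 2$. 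Having passed to $\mathbb F_2$, the paper then runs the J-fraction induction \emph{entirely inside $\mathbb F_2$} (Theorem~\ref{thm:E}). The inductive engine is Lemma~\ref{lem:gf}: writing $f(x)=1/(1+u_1x-x^2g(x))$ with $u_1\in\{0,1\}$, one has $xf^{\mathrm{odd}}+f^{\mathrm{even}}\equiv 1+x^2f(x^2)\pmod 2$ iff $g$ satisfies the same identity, and Lemma~\ref{lem:1-2} shows this identity is exactly the criterion~\eqref{eq:01}. The proof of Lemma~\ref{lem:gf} is a short bare-hands computation with odd/even parts over $\mathbb F_2$---no Desnanot--Jacobi, no orthogonal-polynomial recurrence, no valuation bookkeeping. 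A telescoping sum then converts~\eqref{eq:01} for $\mathbf c$ into~\eqref{eq:-01} for $\mathbf d$.

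Your alternative---keep the J-fraction on the $\pm1$ series and show that the $2$-adic parity of $\lambda_{k+1}$ collapses to the four-term expression---is not wrong in principle, and your translation ($\lambda_1\equiv 2\pmod 4$, $\lambda_k$ odd for $k\ge 2$) is correct. But the ``delicate computation'' you flag is essentially the whole proof, and the tools you name do not obviously deliver it: Desnanot--Jacobi brings in \emph{shifted} Hankel determinants $H_n^{(1)},H_n^{(2)}$ whose $2$-divisibility you have no inductive control over, and the three-term recurrence expresses $\lambda_{k+1}$ as a ratio of increasingly complicated polynomials in the $d_i$ where the cancellation modulo~$4$ is opaque. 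The paper sidesteps all of this precisely by moving to $\mathbb F_2$ first, where each J-fraction layer contributes the trivial numerator $v_k=1$ and the induction becomes the clean generating-function identity of Lemma~\ref{lem:gf}. I would finish your row reduction with the column step above and then imitate Theorem~\ref{thm:E} over $\mathbb F_2$ rather than pursue the $2$-adic route.
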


Let $\mathbf{d}\in\{-1,1\}^{\infty}$ and $f(z)=\sum_{n=0}^{\infty}d_{n}z^{n}$. In fact, if $f(z)$ satisfies \eqref{eq:fh}, then $\mathbf{d}$ is the fixed point of the following substitution of length $p$:
\begin{equation}\label{intro:eq-1}
1\mapsto v_0v_1\cdots v_{p-1},\quad -1\mapsto \bar{v}_0\bar{v}_1\cdots \bar{v}_{p-1}
\end{equation}
where $v_0=1$, $v_i\in\{-1,1\}$ for $i=1,\dots,p-1$ and $\bar{x}:=-x$ for all $x\in\{-1,1\}$.
In general, we study the fixed point
$\mathbf{d}=\lim_{n\to+\infty}\sigma^{n}(1)$
of substitution $\sigma$ of constant length $p$ (not necessarily prime numbers):
\begin{equation}\label{eq:2nd-1}
	\sigma: 1\mapsto v_0v_1\cdots v_{p-1},\quad -1\mapsto w_0w_1\cdots w_{p-1}
\end{equation}
where $v_i,\, w_i\in\{-1,1\}$ for all $i=0,1,\dots,p-1$.
By using the $\pm1$ criterion, we are able to show that

\begin{theorem}\label{thm:B}
Let $\mathbf{d}=\lim\limits_{n\to\infty}\sigma^{n}(1)$ where $\sigma$ is given in \eqref{eq:2nd-1} with $v_0=1$ and $f(z)=\sum_{n=0}^{\infty}d_nz^n$. If $\mathbf{d}$ is apwenian, then $f(z)$ satisfies \eqref{eq:fh}.
\end{theorem}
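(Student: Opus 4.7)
The plan is to apply the $\pm 1$ criterion of Theorem~\ref{thm:A} to force $w_k=-v_k$ for every $k$; by \eqref{intro:eq-1} this is equivalent to $f(z)=P(z)f(z^p)$ with $P(z)=\sum_{k=0}^{p-1}v_k z^k$, which iterates to $f(z)=\prod_{i\ge 0}P(z^{p^i})$, i.e., the form \eqref{eq:fh}. Writing $e_n:=d_n d_{n+1}$, Theorem~\ref{thm:A} is equivalent to $e_n\cdot e_{2n+1}=-1$ for every $n\ge 0$. Since $\mathbf{d}=\sigma(\mathbf{d})$, decomposing $n=pm+j$ with $0\le j\le p-2$ gives $e_{pm+j}=v_j v_{j+1}$ if $d_m=1$ and $e_{pm+j}=w_j w_{j+1}$ if $d_m=-1$, while $e_{pm+p-1}$ is governed by the pair $(d_m,d_{m+1})$. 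Setting $u_k:=v_k w_k\in\{-1,1\}$, the goal becomes $u_k\equiv -1$.

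The first sub-step is to show that $u_k$ is independent of $k$. For each residue $j\in\{0,\dots,p-2\}$, I would apply the criterion at two positions $n_i=pm_i+j$ ($i=1,2$) chosen so that $d_{m_1}\ne d_{m_2}$ but the relevant values of $\mathbf{d}$ at indices $2m_i$ and $2m_i+1$ match across $i=1,2$. This forces $e_{2n_1+1}=e_{2n_2+1}$, hence $e_{n_1}=e_{n_2}$, which reduces to $v_j v_{j+1}=w_j w_{j+1}$, i.e., $u_j=u_{j+1}$. A symmetric comparison, keeping $d_{m_i}$ fixed while letting the values at $2m_i,2m_i+1$ vary, yields analogous $u_\ell=u_{\ell+1}$ identities on further pairs. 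The positional sub-cases depend on whether $2j+1\le p-2$, equals $p-1$ or $p$, or lies strictly between $p$ and $2p-2$; combining the resulting pairings as $j$ ranges over $\{0,\dots,p-2\}$ links every consecutive pair in $\{0,\dots,p-1\}$, whence $u_k\equiv u$ for some constant $u\in\{-1,+1\}$.

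The second sub-step rules out $u=+1$. If $u\equiv 1$ then $w_k=v_k$ for every $k$, so $\sigma(1)=\sigma(-1)$ and $\sigma^2(1)$ is a concatenation of $p$ copies of $v_0v_1\cdots v_{p-1}$; hence $\mathbf{d}$ is purely periodic of period $p$. Then $e_{p-1}=v_{p-1}v_0=e_{2p-1}$, so the criterion at $n=p-1$ reads $(v_{p-1}v_0)^2=1$, contradicting $e_{p-1}\cdot e_{2p-1}=-1$. Consequently $u=-1$, so $w_k=-v_k$ for every $k$, which is exactly the form of $\sigma$ in \eqref{intro:eq-1} and gives \eqref{eq:fh}.

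The main obstacle is the first sub-step: for each residue $j$ one has to guarantee the existence of positions $m_1,m_2$ along $\mathbf{d}$ with the required matching pattern in the triple $(d_{m_i},d_{2m_i},d_{2m_i+1})$, and one has to stitch the boundary sub-cases $2j+1\in\{p-1,p\}$ together so that the chain $u_0=u_1=\cdots=u_{p-1}$ closes up. The case $p=2$ is handled by direct computation: at $n=0$ the criterion gives $d_2=-1$, forcing $v_1=-1$ and $w_0=-1$; the criterion at $n=1$ then forces $w_1=1$, confirming $w_k=-v_k$.
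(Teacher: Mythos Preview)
Your overall plan---show that $u_k:=v_kw_k$ is independent of $k$ and then exclude $u\equiv +1$---is coherent, and your second sub-step is correct: if $w_k=v_k$ for all $k$ then $\mathbf{d}$ is $p$-periodic and the criterion fails at $n=p-1$. The gap you yourself flag in the first sub-step, however, is the heart of the matter, and it is not a mere bookkeeping issue.

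Concretely: the existence of $m_1,m_2$ with $d_{m_1}\ne d_{m_2}$ but $d_{2m_1}=d_{2m_2}$ (respectively $d_{2m_1+1}=d_{2m_2+1}$) is not automatic. If no such pair exists, then $d_m$ \emph{determines} $d_{2m}$ (resp.\ $d_{2m+1}$), i.e., there is a fixed $x\in\{-1,1\}$ with $d_{2m}=xd_m$ (resp.\ $d_{2m+1}=xd_m$) for all $m$. Ruling out these relations for an apwenian $\mathbf{d}$ (outside the Thue--Morse case) is precisely the content of the paper's Lemma~\ref{lem:x4}, and it requires its own argument via Theorem~\ref{thm:A}. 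So once you try to close your existence gap, you are led to prove exactly the key lemma that drives the paper's proof. For even $p$ there is a further obstruction: at the boundary residue $j=p/2-1$, the value $e_{2n+1}$ depends on \emph{both} $d_{2m}$ and $d_{2m+1}$, so failure of your existence hypothesis there does not reduce to either form of Lemma~\ref{lem:x4}; your chain may then split into two halves $u_0=\cdots=u_{p/2-1}$ and $u_{p/2}=\cdots=u_{p-1}$ with possibly distinct constants, and your Step~2 only handles the case where both equal $+1$.

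The paper's proof avoids both difficulties by reversing the logic. Rather than showing $u_j=u_{j+1}$, it first proves $0\notin A$ and $p-1\notin A$ directly (Lemma~\ref{lem:0-A}), and then establishes the one-sided implications $j\notin A\Rightarrow j\pm1\notin A$ (Lemmas~\ref{lem:j->j+1} and~\ref{lem:j->j-1}) by applying the criterion for \emph{all} $n$: in each of the four sub-cases on whether $2j+1,2j+2$ lie in $A$, one obtains either a constant sequence or a relation $d_{2n}=xd_n$, both of which Lemma~\ref{lem:x4} excludes. Because the propagation runs inward from both endpoints, the boundary residue $j=p/2-1$ (for even $p$) is never crossed, so the even-$p$ issue does not arise.
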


In Theorem \ref{thm:t2} we establish a criterion on the substitution itself to tell when the series $f_{p}(z)$ is apwenian for all $p\geq 2$. This result can be applied to the calculation of irrationality exponents. Let $\xi$ be an irrational number. Its \emph{irrationality exponent} is the supremum of the real numbers $\nu$ such that 
\[\left|\xi-\frac{r}{s}\right|<\frac{1}{s^{\nu}}\]
holds for infinitely many pairs of $(r,s)\in\mathbb{Z}\times\mathbb{N}$. 
For the series $f_p(z)$ satisfying \eqref{eq:fh}, Badziahin~\cite{DB19} showed that the irrationality exponent of $f_{p}(1/b)$ is rational, where $b\ge 2$ is an integer with $P(1/b^{p^{m}})\neq 0$ for all integers $m\ge 0$. When the coefficient sequence of $f_{p}(z)$ is apwenian, the Hankel determinants of $f(z)$ are all non-vanishing. Then a combination of Badziahin's results \cite[Corollary 1]{D19} and \cite[Theorem 1.2]{DB19} yields that the irrationality exponent of $f_p(1/b)$ is equal to $2$.
We can also apply Bugeaud, Han, Wen and Yao's result \cite[Theorem 2.1]{BHYW16} to determine the irrationality exponent exactly. Further efforts and new methods are needed to examine the irrationality exponents of values of $f(z)$ at rational points. A consequence of \cite[Theorem 1.2]{DB19} (or \cite[Theorem 2.1]{BHYW16}) and Theorem \ref{thm:t2} is the following 
\begin{theorem}\label{thm:irrationality}
Let $p\geq 3$ be an odd number, $v_0=1$ and $v_i\in\{-1,1\}$ for $i=1,\dots,p-1$. Let $f_p(z)$ be the formal power series given in \eqref{eq:fh}. Assume that $b\ge 2$ is an integer such that $P(\frac{1}{b^{p^{i}}})\neq 0$ for all integers $i\ge 0$. For $m\ge p$, define $v_m:=v_i$ with $m\equiv i~(\mathrm{mod}~p)$ and $0\le i<p$. If \[\frac{v_j+v_{j+1}-v_{2j+1}-v_{2j+2}}{2}  \equiv 1 \pmod{2},\quad 0\leq j\leq p-2,\]
then the real number $f_p(1/b)$ is transcendental and its irrationality exponent is equal to $2$.
\end{theorem}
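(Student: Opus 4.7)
The plan is to chain three ingredients. First, the hypothesis of Theorem~\ref{thm:irrationality} is designed precisely to match the substitutional criterion Theorem~\ref{thm:t2}, which is the $v$-side translation of the $\pm 1$ criterion Theorem~\ref{thm:A}. Since the coefficient sequence $\mathbf{d}=d_0d_1\cdots$ of $f_p(z)=\prod_{i\ge 0}P(z^{p^i})$ is the fixed point starting with $1$ of the substitution
\[
1\mapsto v_0v_1\cdots v_{p-1},\qquad -1\mapsto \bar{v}_0\bar{v}_1\cdots \bar{v}_{p-1},
\]
applying Theorem~\ref{thm:t2} under the assumed congruences (with the periodic extension $v_m:=v_{m\bmod p}$ absorbing the indices $2j+1,2j+2$ which may overflow $p-1$) will yield that $\mathbf{d}$ is apwenian. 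The oddness of $p$ enters exactly here: it is what makes the finitely many congruences at $0\le j\le p-2$ imply the entire family of sequence-level congruences from Theorem~\ref{thm:A} via the self-similarity $d_{pn+i}=d_n v_i$.

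Apweniannity then forces $H_n(\mathbf{d})/2^{n-1}$ to be an odd integer for every $n\ge 1$, so every Hankel determinant of $f_p(z)$ is non-zero. Coupled with the degree-one Mahler functional equation $f_p(z)=P(z)\,f_p(z^p)$ and the hypothesis $P(1/b^{p^i})\neq 0$ for all $i\ge 0$ (which ensures $f_p(1/b)$ converges and is non-zero), this places $f_p(z)$ squarely in the scope of \cite[Theorem 1.2]{DB19} (alternatively \cite[Theorem 2.1]{BHYW16}): when all Hankel determinants are non-vanishing, this machinery pins down the irrationality exponent of $f_p(1/b)$ as exactly $2$, and in particular yields that $f_p(1/b)$ is irrational.

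For transcendence, I would exploit that $\mathbf{d}$ is $p$-automatic, so $f_p(1/b)$ is an automatic real number; since it is irrational, the Adamczewski--Bugeaud theorem on automatic numbers (every irrational automatic real number is transcendental) concludes the argument. An alternative route is a direct appeal to Mahler's transcendence theorem for solutions of degree-one Mahler equations at algebraic points in the open unit disk, which applies here because the functional equation $f_p(z)=P(z)f_p(z^p)$ and the non-vanishing of $f_p(1/b)$ are all that is needed.

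The only genuine obstacle in this plan is the first step, namely Theorem~\ref{thm:t2}: one has to verify that the infinite family of sequence-level congruences from Theorem~\ref{thm:A} really does collapse, for fixed points of constant-length substitutions of odd length $p$, to the finite $v$-level check for $0\le j\le p-2$ appearing in the statement. Once Theorem~\ref{thm:t2} is in hand, Theorem~\ref{thm:irrationality} follows immediately from the three-step chain above, with the second and third steps being direct invocations of established results on Hankel determinants, Mahler functions, and automatic numbers.
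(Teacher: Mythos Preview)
Your proposal is correct and follows essentially the same approach as the paper: the authors state explicitly that Theorem~\ref{thm:irrationality} is a direct consequence of Theorem~\ref{thm:t2} together with \cite[Theorem 1.2]{DB19} (or \cite[Theorem 2.1]{BHYW16}), exactly the chain you describe. Your only addition is to spell out a separate source for the transcendence claim (Adamczewski--Bugeaud or Mahler's method), which the paper leaves implicit in the cited references; this is a welcome clarification but not a genuinely different route.
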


The next natural question is to determine the number of apwenian sequences (starting with $1$) which satisfy \eqref{intro:eq-1} for a given $p$. 
Let $N_p$ be the number of apwenian series $f_p(z)$ with $f_p(0)=1$. We remark that when $p$ is odd, $N_p=2N_p^{\prime}$. The first values of $N_p$ are the following:
\[
\begin{array}{c|*{19}{c}}
	p & 2 & 3 & 4&  5 & 6 & 7 & 8 & 9 &10 & 11 &12& 13 &14& 15&16& 17&18&19\\
\hline
	N_p & 1 & 2 & 1 &2& 0 &0 & 1 & 4 & 0 & 2 & 0 & 2 & 0 & 16 & 1 & 4 & 0 & 2
\end{array}.
\]
Theorem \ref{thm:t2} shows that when $p$ is even, the sequence is apwenian if and only if it is the Thue-Morse sequence. When $p$ is odd, the number is given by the next theorem which is a consequence of  Propositions \ref{prop:1} and \ref{prop:21}. For this, we need some notation. Denote by $\tau$ the permutation
\[\tau: \begin{pmatrix}
0& 1 &\cdots& \frac{p-3}{2}& \frac{p-1}{2}& \frac{p+1}{2}& \cdots& p-2\\
1& 3& \cdots& p-2& 0& 2& \cdots& p-3
\end{pmatrix}.\]
Let $p\geq 3$ be an odd number and set \[\mu(p):=\mathrm{ord}_p(2)=\min\{j\in [1,p-1] :  p|(2^{j}-1)\},\]
where $[s,t]$ denotes the set of integers $j$ such that $s\leq j\leq t$. 

\begin{theorem}\label{thm:D}
Let $p\geq 3$ be an odd number. If there exists an odd cycle in the cycle decomposition of $\tau$, then \[N_p=0;\] otherwise \[N_p=2^{k},\] where $k=\frac{1}{\mu(p)}\sum_{j=0}^{\mu(p)-1}\gcd(2^{j}-1,p) -1$ is the number of cycles of $\tau$.
\end{theorem}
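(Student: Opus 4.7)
The plan is to reformulate the apwenian criterion of Theorem~\ref{thm:irrationality} as a sign constraint on the auxiliary sequence $u_j:=v_jv_{j+1}$ (with $v_p:=v_0$), and then count the admissible tuples via the cycle decomposition of $\tau$.

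First I would check, by case analysis on the values $v_j,v_{j+1}\in\{-1,1\}$, that
\[
\frac{v_j+v_{j+1}-v_{2j+1}-v_{2j+2}}{2}\equiv 1\pmod 2 \iff v_jv_{j+1}=-v_{2j+1}v_{2j+2}.
\]
Using the periodic convention $v_m=v_{m\bmod p}$ and observing that $u$ depends only on two consecutive $v_i$'s, the index wrap-around $(2j+1)\bmod p$ built into $\tau$ matches that on the $v_i$'s, so the apwenian condition becomes
\[
u_{\tau(j)}=-u_j,\qquad 0\le j\le p-2.
\]
Since $v_0=1$ is fixed, the map $(v_1,\ldots,v_{p-1})\mapsto(u_0,\ldots,u_{p-2})$ is a bijection onto $\{-1,1\}^{p-1}$ (with inverse $v_j=u_0u_1\cdots u_{j-1}$; the leftover entry $u_{p-1}$ is automatic from $u_0\cdots u_{p-1}=\prod_i v_i^2=1$). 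Counting apwenian tuples thus reduces to counting $(u_0,\ldots,u_{p-2})\in\{-1,1\}^{p-1}$ satisfying the sign constraint above.

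Decomposing $\tau$ into disjoint cycles, the relation $u_{\tau(j)}=-u_j$ forces $u_{j_0}=(-1)^{\ell}u_{j_0}$ along any cycle $(j_0,j_1,\ldots,j_{\ell-1})$ of length $\ell$. An odd cycle admits no solution in $\{-1,1\}$, giving $N_p=0$; an even cycle admits exactly two solutions parameterised by $u_{j_0}\in\{-1,1\}$. Hence, provided every cycle of $\tau$ is even, $N_p=2^k$ where $k$ is the total number of cycles of $\tau$.

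Finally, to evaluate $k$, I would extend $\tau$ to the permutation $\tilde\tau:\mathbb{Z}/p\mathbb{Z}\to\mathbb{Z}/p\mathbb{Z}$, $j\mapsto 2j+1$. The unique fixed point of $\tilde\tau$ is $p-1$ (since $2j+1\equiv j\pmod p$ forces $j\equiv -1$), so the cycles of $\tau$ are precisely the non-trivial cycles of $\tilde\tau$. An easy induction gives $\tilde\tau^n(j)\equiv 2^nj+(2^n-1)\pmod p$, whence
\[
|\mathrm{Fix}(\tilde\tau^n)|=\#\{j:p\mid(2^n-1)(j+1)\}=\gcd(2^n-1,p),
\]
and the order of $\tilde\tau$ equals $\mu(p)=\mathrm{ord}_p(2)$. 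Applying the classical identity
\[
\#\{\text{cycles of }\phi\}=\frac{1}{m}\sum_{n=0}^{m-1}|\mathrm{Fix}(\phi^n)|
\]
for a permutation $\phi$ of order $m$ (valid because a cycle of length $d$ contributes $d$ fixed points to $\phi^n$ exactly when $d\mid n$) and subtracting one for the fixed point $p-1$ yields the stated value of $k$. The only mildly delicate step is the first one, namely the case-by-case verification that the mod-$2$ congruence collapses to $v_jv_{j+1}=-v_{2j+1}v_{2j+2}$ and the reconciliation of the wrap-around of indices above $p-1$ with the definition of $\tau$ as a permutation of $\{0,\ldots,p-2\}$; everything thereafter is standard permutation cycle counting.
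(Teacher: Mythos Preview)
Your proposal is correct and follows essentially the same route as the paper. The paper's auxiliary variable $\delta(j)=\frac{v_j-v_{j+1}}{2}\in\{0,1\}$ is just the additive avatar of your $u_j=v_jv_{j+1}\in\{-1,1\}$ (via $u_j=(-1)^{\delta(j)}$), so your constraint $u_{\tau(j)}=-u_j$ is exactly the paper's linear system $\delta(j)+\delta(\tau(j))\equiv 1\pmod 2$; both are then analysed cycle-by-cycle and the cycle count is obtained by Burnside. Two minor remarks: the criterion you invoke is Theorem~\ref{thm:t2}(1), not Theorem~\ref{thm:irrationality} (the displayed condition happens to coincide, but the logical dependence is on the former); and your extension to $\tilde\tau$ on $\mathbb{Z}/p\mathbb{Z}$ followed by subtracting the fixed point $p-1$ is a slightly cleaner packaging of the same Burnside computation the paper carries out directly on $\{0,\ldots,p-2\}$.
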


In fact, our method does not only give the number of apwenian sequences, but it also provides a way to find those apwenian sequences (see Remark \ref{re:p=9}).

\subsection{$0$-$1$ apwenian sequences} While we consider the Hankel determinants modulo $2$ of a sequence $\mathbf{c}=c_0c_1c_2\cdots$, the natural alphabet that $\mathbf{c}$ lives on is $\{0,1\}$. One could project a sequence $\mathbf{d}\in\{-1,1\}^{\infty}$ to $\{0,1\}^{\infty}$ by considering the sequence
\[\tilde{\mathbf{d}}:=\left(\frac{d_i-d_{i+2}}{2}\ \,\bmod 2\right)_{i\geq 0}.\]
Our Lemma \ref{lem:01+-1} shows that $\mathbf{d}$ is $\pm 1$ apwenian if and only if $H_n(\tilde{\mathbf{d}})\equiv 1~(\mathrm{mod}~2)$ for all $n\geq 1$.  For this reason, we introduce the $0$-$1$ apwenian sequences. Throughout the paper, $0$-$1$ apwenian sequences and $\pm 1$ apwenian sequences are both called apwenian sequences without any ambiguous.
\begin{definition*}[$0$-$1$ Apwenian sequence]
A sequence $\mathbf{c}=c_0c_1c_2\cdots \in\{0,1\}^{\infty}$ is called an \emph{apwenian} sequence if for all $n\geq 1$, \[H_{n}(\mathbf{c})\equiv 1\pmod{2}.\]
\end{definition*}
A criterion for the $0$-$1$ apwenian sequences is also available.

\begin{theorem}[0-1 Criterion]\label{thm:E}
	Let $\mathbf{c}=c_0c_1c_2\cdots \in\{0,1\}^{\infty}$ such that $c_0=1$. Then $\mathbf{c}$ is apwenian if and only if for all $n\geq 0$,
	\begin{equation}\label{eq:01}
c_{n}\equiv c_{2n+1}+c_{2n+2}\pmod{2}.
	\end{equation}
\end{theorem}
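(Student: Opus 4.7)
The plan is to deduce Theorem~\ref{thm:E} from the $\pm 1$ criterion (Theorem~\ref{thm:A}) via the projection correspondence in Lemma~\ref{lem:01+-1}. Given $\mathbf{c}\in\{0,1\}^{\infty}$ with $c_0=1$, I would first lift it to a $\pm 1$ sequence $\mathbf{d}$ by fixing $d_0=d_1=1$ and recursively setting $d_{i+2}=(-1)^{c_i}d_i$; by construction $\tilde{\mathbf{d}}=\mathbf{c}$. Lemma~\ref{lem:01+-1} then gives that $\mathbf{c}$ is $0$-$1$ apwenian if and only if $\mathbf{d}$ is $\pm 1$ apwenian, and Theorem~\ref{thm:A} says the latter is equivalent to the $\pm 1$ criterion \eqref{eq:-01} holding for $\mathbf{d}$. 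Hence it remains to show that \eqref{eq:-01} for $\mathbf{d}$ is equivalent to \eqref{eq:01} for $\mathbf{c}$, with the anchor $c_0=1$ playing a decisive role.

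Next I would move everything to an $\mathbb{F}_2$-friendly form. Writing $d_i=1-2e_i$ with $e_i\in\{0,1\}$, a direct computation gives
\[
\frac{d_n+d_{n+1}-d_{2n+1}-d_{2n+2}}{2}=-e_n-e_{n+1}+e_{2n+1}+e_{2n+2},
\]
so \eqref{eq:-01} becomes $E_n:=e_n+e_{n+1}+e_{2n+1}+e_{2n+2}\equiv 1\pmod{2}$ for all $n\ge 0$. Similarly $d_i-d_{i+2}=2(e_{i+2}-e_i)$ yields $c_i\equiv e_i+e_{i+2}\pmod{2}$, and in particular $E_0=e_0+2e_1+e_2\equiv c_0=1\pmod{2}$.

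Substituting these identifications and regrouping, one obtains the telescoping identity
\[
E_n+E_{n+1}\equiv c_n+c_{2n+1}+c_{2n+2}\pmod{2} \qquad (n\ge 0).
\]
Combined with $E_0\equiv 1\pmod{2}$, an induction on $n$ shows that $E_n\equiv 1\pmod{2}$ for every $n$ if and only if $c_n+c_{2n+1}+c_{2n+2}\equiv 0\pmod{2}$ for every $n$, which is precisely \eqref{eq:01}. I expect the work to be mostly bookkeeping rather than conceptually hard; the one delicate point is the use of the hypothesis $c_0=1$: without it the telescoping only forces $E_n$ to be constant, and the alternative value $E_n\equiv 0$ would correspond to a sequence satisfying the telescoping recurrence but failing to be apwenian, so this hypothesis must be invoked exactly to pin the constant to $1$.
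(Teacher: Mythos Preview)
Your computations are correct, and the equivalence you establish between \eqref{eq:-01} for $\mathbf{d}$ and \eqref{eq:01} for $\mathbf{c}$ is valid. However, the overall approach is circular in the context of this paper: Theorem~\ref{thm:A} is not proved independently but is itself \emph{deduced from} Theorem~\ref{thm:E} via Lemma~\ref{lem:01+-1} (see the proof of Theorem~\ref{thm:A} in Section~\ref{sec:cri}). In fact, the telescoping manipulation you carry out is essentially the paper's derivation of Theorem~\ref{thm:A} from Theorem~\ref{thm:E}, run in reverse. So as written, your argument assumes what it sets out to prove; you would need an independent proof of Theorem~\ref{thm:A}, and none is given.

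The paper's direct proof of Theorem~\ref{thm:E} is where the real work happens, and it is genuinely different: it goes through the Jacobi continued fraction expansion. The key step (Lemma~\ref{lem:gf}) shows that the functional equation \eqref{eq:02}---equivalent to \eqref{eq:01} by Lemma~\ref{lem:1-2}---is preserved when one peels off one level of the $J$-fraction: if $f(x)=1/(1+u_1x-x^2g(x))$ with $g(0)=1$, then $f$ satisfies \eqref{eq:02} iff $g$ does. For the `if' direction one iterates this to produce an infinite $J$-fraction with all numerators equal to $1$, whence $H_n(f)\equiv 1$ for all $n$. For the `only if' direction one truncates the $J$-fraction at depth $k-1$, replaces the tail $f_k$ by a specific series known to satisfy \eqref{eq:02} (the Thue--Morse--derived series of Lemma~\ref{lem:tm}), and uses that the first $2k-1$ coefficients are unchanged to extract \eqref{eq:01} for indices up to $k-2$. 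This $J$-fraction mechanism is the missing ingredient in your proposal.
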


We ask the same question on the number of $0$-$1$ apwenian sequences that are fixed points of substitutions of constant lengths. We have seen that there are not so many $\pm 1$ apwenian sequences. It is amazing that the $0$-$1$ apwenian sequences are even rarer. In detail, we study the fixed point
$\mathbf{c}=\lim_{n\to+\infty}\sigma_0^{n}(1)$
of substitution $\sigma_0$ of constant length $p$ (not necessarily prime):
\begin{equation}\label{eq:sigma0}
	\sigma_0: 1\mapsto v_0v_1\cdots v_{p-1},\quad 0\mapsto w_0w_1\cdots w_{p-1}
\end{equation}
where $v_0=1$, $v_i,\, w_i\in\{0,1\}$ for all $i=0,1,\dots,p-1$.
%Namely, \[\mathbf{c}=\lim_{n\to+\infty}\sigma_0^{n}(1).\]
Denote by  \[\mathcal{O}:=\{\mathbf{c}\in\{0,1\}^{\infty} :  \mathbf{c}=\sigma_0(\mathbf{c}) \text{ for some }\sigma_0\text{ given by \eqref{eq:sigma0}}\}\]
the set of all the fixed points of substitutions of constant length.
\begin{theorem}\label{thm:F}
The only apwenian sequence in $\mathcal{O}$ is the period-doubling sequence given by the substitution $1\mapsto 10$ and $0\mapsto 11$.
\end{theorem}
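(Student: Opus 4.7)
My approach would be to apply the 0-1 criterion from Theorem~\ref{thm:E} (namely, $c_n \equiv c_{2n+1}+c_{2n+2} \pmod{2}$) to a fixed point $\mathbf{c} = \sigma_0^{\infty}(1)$ of a length-$p$ substitution, and combine it with the block identity $c_{kp+i} = \sigma_0(c_k)_i$ to pin down $\mathbf{c}$.

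The easy direction is the verification that the period-doubling sequence is apwenian. Since $\sigma_0(1)=10$ and $\sigma_0(0)=11$ both start with $1$, one gets $c_{2n}=1$ for every $n$, and their second letters yield $c_{2n+1} = 1 - c_n$. Hence $c_{2n+1}+c_{2n+2} \equiv (1-c_n)+1 \equiv c_n \pmod{2}$, and Theorem~\ref{thm:E} applies.

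For uniqueness, I would split on $p$. The case $p = 2$ is short: writing $\sigma_0(1) = 1\,v_1$ and $\sigma_0(0) = w_0 w_1$, the criterion at $n = 0$ rules out $v_1 = 1$ (otherwise $c_1 + c_2 = 1 + 1 \not\equiv 1$) and then forces $c_2 = w_0 = 1$; the criterion at $n = 1$ next forces $w_1 = 1$, giving precisely the period-doubling substitution. When $p \geq 3$, the criterion at $n = 0$ forces $\{v_1, v_2\} = \{0, 1\}$. My plan is to rule out both sub-cases by iteratively applying the criterion at $n = 1, 2, \ldots$ and propagating the constraints through $\sigma_0$. For small $p$ the contradiction appears at a small index---for instance, with $p = 3$ and $(v_1,v_2) = (1,0)$ one has $c_3 + c_4 = v_0 + v_1 = 0 \not\equiv c_1 = 1$ already at $n = 1$---but for larger $p$, or for the sub-case $(v_1,v_2) = (0,1)$, one may need to push through several iterations of $\sigma_0$ before the criterion is violated.

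The main obstacle is therefore the $p \geq 3$ case, where the base-$2$ arithmetic of the criterion interacts awkwardly with the base-$p$ block structure of $\sigma_0$: the positions $2n+1$ and $2n+2$ straddle block boundaries in many ways depending on the residue of $n$ modulo $p$, producing a case explosion. A cleaner strategy I would attempt is to first prove the auxiliary statement that any apwenian $\mathbf{c} \in \mathcal{O}$ satisfies $c_{2n} = 1$ for all $n$. Once established, the criterion immediately gives $c_{2n+1} = 1 - c_n$, which uniquely determines $\mathbf{c}$ as the period-doubling sequence. Proving this auxiliary claim---propagating $c_0 = 1$ to every even index via the joint constraints of the criterion and of $\sigma_0$---is where I expect the technical core of the argument to lie.
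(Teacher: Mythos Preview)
Your first plan---to derive a contradiction in \emph{both} sub-cases $(v_1,v_2)\in\{(1,0),(0,1)\}$ for every $p\ge 3$---cannot succeed. The period-doubling sequence is also the fixed point of $\sigma_0^m$ for each $m\ge 1$; in particular, for $p=4$ the substitution $1\mapsto 1011$, $0\mapsto 1010$ has $(v_1,v_2)=(0,1)$ and its fixed point \emph{is} apwenian. So in the sub-case $(c_1,c_2)=(0,1)$ you should not expect the criterion ever to be violated; instead you must show that this sub-case \emph{forces} $\mathbf c$ to equal the period-doubling sequence, regardless of which length-$p$ substitution fixes it.

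This is exactly what the paper does. It first replaces $\sigma_0$ by a power so as to assume $p\ge 3$ throughout, absorbing your separate $p=2$ analysis. The sub-case $(c_1,c_2)=(1,0)$ is then eliminated by a short argument valid for all $p\ge 3$ simultaneously (not a $p$-dependent induction). In the sub-case $(c_1,c_2)=(0,1)$, the key move---and the idea your sketch is missing---is to apply the criterion at \emph{block-boundary} indices. From the criterion at $n=p$ one obtains $c_p=1$, hence $w_0=1$, hence $c_{ip}=1$ for every $i$. Taking $n=(i+1)p-1$ in the criterion then gives $c_{(i+1)p-1}+c_{2(i+1)p-1}\equiv 1\pmod 2$; but these two entries are the \emph{last} letters of $\sigma_0(c_i)$ and of $\sigma_0(c_{2i+1})$ respectively, so they differ precisely when $c_i\ne c_{2i+1}$. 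This yields $c_{2i+1}\equiv 1+c_i$ for all $i$, and one further use of the criterion gives $c_{2i+2}\equiv 1$, which is your auxiliary claim. So your ``cleaner strategy'' has the correct target and coincides with the paper's endgame, but the mechanism that makes it work---evaluating the criterion at the indices $n=kp-1$ where the base-$p$ block structure and the base-$2$ recursion interact---is what you still need to supply.
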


\subsection{Organization}
In section \ref{sec:pre}, we list some notation and basic properties of substitutions and Jacobi continued fractions. In section \ref{sec:cri}, we prove Theorems \ref{thm:A} and \ref{thm:E} which give criteria for 0-1 and $\pm 1$ sequences to be apwenian. In section \ref{sec:01}, we prove Theorem \ref{thm:F}. In section \ref{sec:1-1}, we study a special class of $\pm 1$ sequences and prove Theorems \ref{thm:t2} and \ref{thm:D}. In section \ref{sec:gen}, we investigate all substitutions of constant length on $\{-1,1\}$ and prove Theorem \ref{thm:B}. In section \ref{sec:dis}, we give some examples. In the last section, we give some remarks on the permutation $\tau$ and apwenian sequences.
% >>>

\section{Preliminary}\label{sec:pre} % <<<
\subsection{Substitutions}
The basic notation of words and substitutions can be found in \cite{AS03,1794}.
Let $\mathcal{A}=\{a,b\}$ be an alphabet of two letters. For $n\geq 1$, the elements $w\in\mathcal{A}^{n}$ are called \emph{words}. The \emph{length} of a word $w$ is denoted by $|w|$. That is for any $w\in\mathcal{A}^{n}$, we have $|w|=n$. The set of all finite words on $\mathcal{A}$ is written $\mathcal{A}^{*}=\cup_{n\geq 0}\mathcal{A}^{n}$ where $\mathcal{A}^{0}=\{\varepsilon\}$ and $\varepsilon$ is the empty word.  Let $|w|_a$ be the number of occurrences of the letter $a$ in $w$.
The \emph{conjugate} of the letter $a$ (resp. $b$) is $b$ (resp. $a$), denoted by $\bar{a}$ (resp. $\bar{b}$). For any finite word $w\in\mathcal{A}^*$, we define by $\bar{w}=\bar{w}_1\cdots\bar{w}_{|w|}$ the conjugate of $w$.

For any two words $w,v\in \mathcal{A}^{*}$, their \emph{concatenation}, denoted by $wv$, is the word \[w_1w_2\dots w_{|w|}v_1v_2\dots v_{|v|}.\] For $u,v,w\in\mathcal{A}^{*}$, if $u=wv$, then $w$ (resp. $v$) is a \emph{prefix} (resp. \emph{suffix}) of $u$. For any $w,v\in\mathcal{A}^{*}$, the \emph{longest common prefix} of $w$ and $v$ is written $w\wedge v$. The set $\mathcal{A}^{*}$ with the operation concatenation is a free monoid. A \emph{substitution} $\sigma$ on the alphabet $\mathcal{A}$ is a mapping from $\mathcal{A}$ to $\mathcal{A}^{*}$ and it can be extended to a morphism on $\mathcal{A}^{*}$. Namely, for any $w,v\in\mathcal{A}^{*}$,
\[\sigma(wv)=\sigma(w)\sigma(v).\]
A substitution $\sigma$ is said of \emph{constant length}, if there exists $\ell\in\mathbb{N}$ such that $|\sigma(a)|=\ell$ for all $a\in\mathcal{A}$.

The elements $\mathbf{a}=a_0a_1a_2\cdots\in \mathcal{A}^{\infty}$ are infinite words or \emph{sequences}. For $\mathbf{a}\in\mathcal{A}^{\infty}$ and $w\in\mathcal{A}^{*}$, if $\mathbf{a}=vw\mathbf{a}^{\prime}$ where $v\in\mathcal{A}^{*}$ and $\mathbf{a}^{\prime}\in \mathcal{A}^{\infty}$, then $w$ is a \emph{subword} (or a \emph{factor}) of $\mathbf{a}$. A sequence $\mathbf{a}\in\mathcal{A}^{\infty}$ is \emph{uniformly recurrent} if for any subword $w$ of $\mathbf{a}$, there exists an integer $\ell$ such that every subword of length $\ell$ of $\mathbf{a}$ contains $w$ as a subword.

An infinite sequence $\mathbf{a}\in\mathcal{A}^{\infty}$ is called a \emph{fixed point} of a substitution $\sigma$ if $\sigma(\mathbf{a})=\mathbf{a}$.  There is a {\it natural metric} on $\mathcal{A}^{\infty}$: for $\mathbf{a},\,\mathbf{b}\in\mathcal{A}^{\infty}$,
\[\mathrm{dist}(\mathbf{a},\mathbf{b})=2^{-|\mathbf{a}\wedge\mathbf{b}|}.\]
A substitution $\sigma$ on $\mathcal{A}$ is \emph{prolongable} if there exists an $a\in\mathcal{A}$ such that $a$ is a prefix of $\sigma(a)$ and $|\sigma(a)|\geq 2$. A prolongable substitution has a fixed point \[\mathbf{a}=\lim_{n\to+\infty}\sigma^{n}(a)\] where the limit is taken under the natural metric and $\sigma^{n}$ represents the $n$ times composition $\sigma\circ\cdots\circ\sigma$. If, in addition, the substitution $\sigma$ is \emph{primitive}, i.e. there exists $k\in\mathbb{N}$ such that for all $a,\,b\in\mathcal{A}$, $b$ occurs in $\sigma^{k}(a)$, then $\mathbf{a}$ is uniformly recurrent (see \cite[Theorem 10.9.5]{AS03}).

A sequence is \emph{Sturmian} if for all $n\ge 1$, it has exactly $n+1$ different subwords of lenght $n$. A sequence $\mathbf{a}$ over the alphabet $\{a,b\}$ is \emph{balanced} if for any subwords $u, v$ of $\mathbf{a}$ with the same length, we have $||u|_a-|v|_a|\leq1$.  It is known that a sequence is Sturmian if and only if it is a non-eventually periodic balanced sequence over two letters. Note that, if $\mathbf{a}$ is Sturmian on the alphabet $\{a,b\}$, then exactly one of the words $aa, bb$ is not a subword of $\mathbf{a}$. We say that a Sturmian sequence $\mathbf{a}$ is of \emph{type} $a$ if $aa$ is a subword of $\mathbf{a}$. For more details on Sturmian sequences, see for example  \cite[Chapter 6]{1794}.

\subsection{Jacobi continued fraction}
Let $\mathbb{F}$ be a field and $x$ be an indeterminate. For a non-zero formal power series $f(x)=\sum_{n\ge 0}a_nx^n$, its order $\|f\|$ is the minimal $n$ such that $a_n\neq 0$; the order of zero series is $+\infty$. Then $\mathbb{F}[[x]]$ is the ring of formal power series with the metric $d(f,g)=2^{-\|f-g\|}$ for $f,g\in\mathbb{F}[[x]]$. Let $\mathbf{u}=u_1u_2\dots$ and $\mathbf{v}=v_0v_1v_2\dots$ be two sequences of elements in $\mathbb{F}$, where $v_i\neq 0$ for all $i$. Consider the finite continued fraction 
\[\mathbf{J}_n:=\mathbf{J}\begin{pmatrix}
	v_0 & v_1 & \dots & v_{n-1}\\
	u_1 & u_2 & \dots & u_n
	\end{pmatrix}:= \cfrac{v_0}{1+u_1x-\cfrac{v_1x^{2}}{1+u_2x-\cfrac{\ddots}{1+u_{n-1}x-\cfrac{v_{n-1}x^2}{1+u_{n}x}}}}\in \mathbb{F}[[x]].\]
The sequence $(\mathbf{J}_n)$ always converges in $\mathbb{F}[[x]]$; see for example \cite[Theorem 1]{Fla80}. Its limit defines the infinite continued fraction 
\[\mathbf{J}\begin{pmatrix}
	v_0 & v_1 & v_2 & \dots\\
	u_1 & u_2 & u_3 & \dots
	\end{pmatrix}:=
	\cfrac{v_0}{1+u_1x-\cfrac{v_1x^{2}}{1+u_2x-\cfrac{v_2x^2}{1+u_3x-\cfrac{v_3x^2}{\ddots}}}}\]
which is called the \emph{Jacobi continued fraction} (or \emph{$J$-fraction}) attached to $(\mathbf{u},\mathbf{v})$. The finite continued fraction $\mathbf{J}_n$ is called the  $n$-th \emph{approximant} of 
$\mathbf{J}\begin{pmatrix}
	v_0 & v_1 & v_2 & \dots\\
	u_1 & u_2 & u_3 & \dots
	\end{pmatrix}$.  

The following basic properties of $J$-fractions are obtained by Heilermann \cite{Heilermann1846}; see also \cite[Chapter IX \& XI]{Wa48}. 
A formal power series $f(x)$ yields a $J$-fraction expansion if its Hankel determinants are all non-vanishing, i.e. for all $n\geq 1$, $H_n(f)\neq 0$. A closer relation between the Hankel determinants and $J$-fraction expansion of a formal power series is the following: if
\[f(x)=\mathbf{J}\begin{pmatrix}
v_0 & v_1 & v_2 & \dots\\
u_1 & u_2 & u_3 & \dots
\end{pmatrix},\]
then for all $n\geq 1$,
\[H_n(f)=v_0^{n}v_1^{n-1}v_{2}^{n-2}\cdots v_{n-2}^{2}v_{n-1}.\]
Moreover, the \emph{order} of the $n$-th approximant is $2n+1$, namely,
\[f(x)-\mathbf{J}\begin{pmatrix}
v_0 & v_1 & \dots & v_{n-1}\\
u_1 & u_2 & \dots & u_n
\end{pmatrix} = O(z^{2n+1}).\]
The power series expansion (in ascending order) of $f(x)$ and its $n$-th approximant agree on just the first $2n$ terms.
% >>>

\section{Criteria for apwenian sequences}\label{sec:cri} % <<<
In this section, we first prove a criterion for 0-1 sequences to be apwenian. Then by studying the relation between 0-1 sequences and $\pm 1$ sequences, we transfer the 0-1 criterion to the $\pm 1$ criterion.

\subsection{0-1 Criterion}
Let $\mathbf{c}=c_0c_1c_2\cdots\in\{0,1\}^{\infty}$. Recall that $\mathbf{c}$ is apwenian if for all $n\geq 1$, its Hankel determinants
\[H_{n}(\mathbf{c})=\left|\begin{matrix}
c_{0} & c_{1} & \cdots & c_{n-1}\\
c_{1} & c_{2} & \cdots & c_{n}\\
\vdots & \vdots & \ddots & \vdots\\
c_{n-1} &  c_{n} & \cdots & c_{2n-2}
\end{matrix}\right|\equiv 1\pmod{2}.\]
Theorem \ref{thm:E} gives a sufficient and necessary condition for a 0-1 sequence to be apwenian.
To prove the result, we need some preparation.

\begin{lemma}\label{lem:1-2}
Let $f(x)=\sum_{i\geq 0}c_{i}x^{i}$ where $c_0=1$. Then, the condition \eqref{eq:01} is equivalent to
\begin{equation}\label{eq:02}
1+x^{2}f(x^2) \equiv  xf^{\text{odd}}(x) + f^{\text{even}}(x) \pmod{2},
\end{equation}
where $f^{\text{odd}}(x)=\sum_{i\geq 0}c_{2i+1}x^{2i+1}$ and $f^{\text{even}}(x)=\sum_{i\geq 0}c_{2i}x^{2i}$.
\end{lemma}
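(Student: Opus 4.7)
The plan is to verify the equivalence by expanding both sides of \eqref{eq:02} as formal power series and matching coefficients term by term. Since only even powers of $x$ appear on each side, there is really only one family of coefficient equations to check.

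First I would rewrite the right-hand side of \eqref{eq:02}:
\begin{align*}
f^{\text{even}}(x) + xf^{\text{odd}}(x)
 = \sum_{i\geq 0} c_{2i}x^{2i} + \sum_{i\geq 0} c_{2i+1}x^{2i+2}
 = c_0 + \sum_{i\geq 1}(c_{2i-1}+c_{2i})x^{2i},
\end{align*}
and rewrite the left-hand side by a simple index shift:
\begin{align*}
1 + x^{2}f(x^{2}) = 1 + \sum_{i\geq 0} c_{i}x^{2i+2} = 1 + \sum_{i\geq 1} c_{i-1}x^{2i}.
\end{align*}

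Next I would compare coefficients. Because $c_0=1$, the constant terms on both sides agree (both equal $1$). For $i\geq 1$, the coefficient of $x^{2i}$ on the left is $c_{i-1}$, and on the right is $c_{2i-1}+c_{2i}$. Hence \eqref{eq:02} holds modulo $2$ if and only if
\[
c_{i-1} \equiv c_{2i-1} + c_{2i}\pmod{2}\qquad\text{for every }i\geq 1,
\]
which upon the substitution $n:=i-1$ is precisely \eqref{eq:01}. Since the only odd powers of $x$ that could appear on either side have coefficient $0$, no further comparisons are needed.

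This is essentially a bookkeeping lemma, so there is no real obstacle; the only care required is in getting the index shifts right and in checking that the constant term $1$ on the left of \eqref{eq:02} is correctly accounted for by $c_0=1$ on the right (which is why the hypothesis $c_0=1$ is stated). The equivalence will therefore follow from a direct coefficient comparison with no further machinery.
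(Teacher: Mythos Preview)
Your argument is correct and is essentially the same direct coefficient comparison as the paper's own proof; the only cosmetic difference is that you index the sum over $i\geq 1$ via $x^{2i}$ whereas the paper indexes over $i\geq 0$ via $x^{2i+2}$, which amounts to the same substitution $n=i-1$ you make at the end.
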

\begin{proof}
Let $\mathbf{b}=b_0b_1b_2\cdots\in\{0,1\}^{\infty}$ be a sequence satisfying \eqref{eq:01}. Write its generating function by $g(x)=\sum_{i\geq 0}b_ix^{i}$ and let   $g^{\text{odd}}(x)=\sum_{i\geq 0}b_{2i+1}x^{2i+1}$ and $g^{\text{even}}(x)=\sum_{i\geq 0}b_{2i}x^{2i}$. 

For any $g$ one has $b_{0}+x^{2}g(x^{2})=b_0 + x^{2}\sum_{i\geq 0}b_{i}x^{2i}$ and 
\begin{align*}
	xg^{\text{odd}}(x)+g^{\text{even}}(x) &  = \sum_{i\geq 0}b_{2i+1}x^{2i+2}+\left(b_{0}+\sum_{i\geq 0}b_{2i+2}x^{2i+2}\right)\\
	& = b_0 + \sum_{i\geq 0}(b_{2i+1}+b_{2i+2})x^{2i+2}.
\end{align*}
Thus $b_{0}+x^{2}g(x^{2})\equiv xg^{\text{odd}}(x)+g^{\text{even}}(x)\pmod{2}$ if and only if $b_i\equiv b_{2i+1}+b_{2i+2}\pmod{2}$ for all $i\ge 0$.
\end{proof}

\begin{lemma}\label{lem:gf}
	Let $u_1\in \{0,1\}$ and $g(x)=\sum_{i\geq 0}b_ix^{i}$ where $b_0=1$ and $b_i\in\{0,1\}$ for all $i\geq 1$. Define \[f(x):=\cfrac{1}{1+u_1x-x^2g(x)}.\] Then $f(x)$ satisfies \eqref{eq:02} if and only if $g(x)$ satisfies \eqref{eq:02}.
\end{lemma}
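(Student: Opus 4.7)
The plan is to work in $\mathbb{F}_2[[x]]$ throughout. By Lemma~\ref{lem:1-2}, condition \eqref{eq:02} for $f$ is equivalent to the single identity $f^{\text{even}}(x) + xf^{\text{odd}}(x) \equiv 1 + x^2 f(x^2) \pmod 2$, and similarly for $g$. From the defining relation we have $(1 + u_1 x + x^2 g(x))\, f(x) \equiv 1 \pmod 2$, hence $f \equiv 1 + (u_1 x + x^2 g) f$ and therefore $1 + f \equiv (u_1 x + x^2 g) f \pmod 2$.

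First I would split the reduction $f \equiv 1 + u_1 x f + x^2 g f \pmod 2$ into its even-parity and odd-parity parts and then form the combination (even part) $+\, x\cdot$(odd part). Writing $\alpha := f^{\text{even}} + xf^{\text{odd}}$ and $\beta := xf^{\text{even}} + f^{\text{odd}}$, the trivial identity $\alpha + \beta \equiv (1+x) f \pmod 2$ lets me eliminate $\beta$, yielding
\[\alpha(1 + u_1 x + x^2 g) \equiv 1 + (1+x)(u_1 x + x^2 g^{\text{odd}})\, f \pmod 2.\]
Multiplying by $f$ and using both $(1 + u_1 x + x^2 g) f \equiv 1$ and the Frobenius identity $f^2 \equiv f(x^2) \pmod 2$ gives the closed form
\[\alpha \equiv f + (1+x)(u_1 x + x^2 g^{\text{odd}})\, f(x^2) \pmod 2.\]

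Next, I would substitute this into the condition $\alpha \equiv 1 + x^2 f(x^2) \pmod 2$, use $1 + f \equiv (u_1 x + x^2 g) f$ and once more $f^2 \equiv f(x^2)$, and cross-multiply by $1 + u_1 x + x^2 g$. This reduces the condition to the polynomial identity
\[(u_1 x + x^2 g)(1 + u_1 x + x^2 g) \equiv x^2 + (1+x)(u_1 x + x^2 g^{\text{odd}}) \pmod 2.\]
Expanding the left-hand side (with $u_1^2 \equiv u_1$ and $g^2 \equiv g(x^2)$), the $u_1 x$ and $u_1 x^2$ contributions cancel between the two sides, and dividing by $x^2$ leaves exactly $1 + x^2 g(x^2) \equiv g^{\text{even}} + xg^{\text{odd}} \pmod 2$, which by Lemma~\ref{lem:1-2} is \eqref{eq:02} for $g$. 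Every step above is an equivalence, so the lemma follows.

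The main obstacle is recognising that the right auxiliary quantity to isolate is $\alpha = f^{\text{even}} + xf^{\text{odd}}$: its companion $\beta$ is related to it by the elementary identity $\alpha + \beta \equiv (1+x) f \pmod 2$, and this together with the Frobenius identity $f^2 \equiv f(x^2) \pmod 2$ is what makes all the $u_1$-dependent and $g^{\text{even}}$-dependent contributions collapse cleanly, leaving only a direct computation that one recognises as \eqref{eq:02} for $g$.
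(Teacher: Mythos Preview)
Your proof is correct and follows essentially the same route as the paper's: split the defining relation $(1+u_1x+x^2g)f\equiv 1$ into even and odd parts, obtain a closed form for $f^{\text{even}}+xf^{\text{odd}}$ in terms of $g^{\text{even}},g^{\text{odd}}$ and $f(x^2)$ via Frobenius, and then check that \eqref{eq:02} for $f$ collapses to \eqref{eq:02} for $g$. The only technical difference is that the paper solves the resulting $2\times 2$ system for $f^{\text{even}}$ and $f^{\text{odd}}$ separately (obtaining $f^{\text{even}}\equiv(1+x^2g^{\text{even}})f(x^2)$ and $f^{\text{odd}}\equiv(u_1x+x^2g^{\text{odd}})f(x^2)$), whereas you bypass this by working with the combinations $\alpha,\beta$ and the identity $\alpha+\beta\equiv(1+x)f$; both lead to the same expression for $\alpha$ and the same final computation.
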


\begin{proof}
Write $h(x) :=  f(x)(1+u_1x-x^2g(x))$. Then $h(x)=1$ means that $h^{\text{odd}}(x)= 0$ and $h^{\text{even}}(x)= 1$.  Note that
\begin{align*}
h(x) = &\,  f(x)(1+u_1x-x^2g(x))\\
 = &\, (f^{\text{odd}}(x)+f^{\text{even}}(x))\Big[\big(u_1x-x^2g^{\text{odd}}(x)\big)+\big(1-x^2g^{\text{even}}(x)\big)\Big].
\end{align*}
So
\begin{equation}
\left\{
\begin{aligned}
0 & = h^{\text{odd}}(x) = f^{\text{odd}}(x)\big(1-x^2g^{\text{even}}(x)\big)+f^{\text{even}}(x)\big(u_1x-x^2g^{\text{odd}}(x)\big),\\
1 & = h^{\text{even}}(x) = f^{\text{even}}(x)\big(1-x^2g^{\text{even}}(x)\big)+f^{\text{odd}}(x)\big(u_1x-x^2g^{\text{odd}}(x)\big).
\end{aligned}\right.\label{eq:h-fg}
\end{equation}

Solving the linear system \eqref{eq:h-fg} in variables $f^{\text{odd}}$ and $f^{\text{even}}$, one obtains
\[f^{\text{even}}(x) = f(x)f(-x)\left(1-x^2g^{\text{even}}(x)\right)\text{ and }
f^{\text{odd}}(x) = f(x)f(-x)\left(x^2g^{\text{odd}}(x)-u_1x\right).\]
Consequently,
\begin{equation}\label{f-equation}
  f^{\text{odd}}(x) \equiv (u_1x+x^{2}g^{\text{odd}}(x))f(x^2) \text{ and } f^{\text{even}}(x) \equiv (1+x^{2}g^{\text{even}}(x))f(x^2)\pmod{2}.
\end{equation}
If $g(x)$ satisfies \eqref{eq:02}, then
\begin{align*}
xf^{\text{odd}}(x)+f^{\text{even}}(x) & \equiv x(u_1x+x^{2}g^{\text{odd}}(x))f(x^2)+(1+x^{2}g^{\text{even}}(x))f(x^2)\\
& \equiv f(x^{2})\Big(u_1x^{2}+x^{3}g^{\text{odd}}(x)+1+x^{2}g^{\text{even}}(x)\Big)\\
& \equiv f(x^{2})\Big(1+u_1x^{2}+x^{2}\big(1+x^{2}g(x^{2})\big)\Big)\qquad\text{by }\eqref{eq:02} \text{ for }g(x)\\
& \equiv f(x^{2})\Big(\frac{1}{f(x^{2})}+x^{2}\Big)\\
& \equiv 1+x^{2}f(x^{2})\pmod{2}
\end{align*}
which is \eqref{eq:02} for $f(x)$. Conversely, if $f(x)$ satisfies \eqref{eq:02}, then noticing that $\frac{1}{f(x^2)}=1+u_1x^2-x^4g(x^2)$, we have
\begin{align*}
1+(1+u_1)x^2-x^4g(x^2)& \equiv \frac{1}{f(x^2)}\Big(1+x^2f(x^2)\Big)\\
& \equiv \frac{1}{f(x^2)}\Big(xf^{\text{odd}}(x)+f^{\text{even}}(x)\Big) \pmod{2}\qquad\text{by }\eqref{eq:02} \text{ for }f(x) \\
& \equiv u_1x^{2}+x^{3}g^{\text{odd}}(x)+1+x^{2}g^{\text{even}}(x)\pmod{2}.\qquad \text{by }\eqref{f-equation}
\end{align*}
Therefore,
\begin{align*}
x^2-x^4g(x^2) & \equiv x^2\left(xg^{\text{odd}}(x)+g^{\text{even}}(x)\right)\pmod{2}
\end{align*}
which implies that $g(x)$ satisfies \eqref{eq:02}.
\end{proof}

\begin{lemma}\label{lem:tm}
Let $(t_n)_{n\geq 0}$ be the $0$-$1$ Thue-Morse sequence given by $t_0=1$, $t_{2n}=t_n$ and $t_{2n+1}\equiv1+t_n~ (\mathrm{mod}~2)$ for all $n\geq 0$. Then $g(x)=\sum_{i\geq 0}(t_{n}+t_{n+2})x^{n}$ satisfies \eqref{eq:02}.
\end{lemma}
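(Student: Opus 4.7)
The plan is to apply Lemma~\ref{lem:1-2} and reduce the task to a pointwise congruence that follows immediately from the defining recursion of $(t_n)$. Write $b_n := t_n + t_{n+2}$, so that $g(x)=\sum_{n\ge 0}b_n x^n$. First I would verify that $b_0\equiv 1\pmod 2$, which is exactly the hypothesis $c_0=1$ needed in Lemma~\ref{lem:1-2}: from $t_0=1$ one gets $t_1\equiv 1+t_0\equiv 0$ and $t_2=t_1\equiv 0\pmod 2$, so $b_0\equiv 1\pmod 2$. By Lemma~\ref{lem:1-2}, condition \eqref{eq:02} for $g(x)$ is equivalent to the congruence
\begin{equation*}
b_n\equiv b_{2n+1}+b_{2n+2}\pmod 2\qquad \text{for all } n\ge 0.
\end{equation*}

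Next I would rewrite the right-hand side using $b_m=t_m+t_{m+2}$:
\begin{equation*}
b_{2n+1}+b_{2n+2} = (t_{2n+1}+t_{2n+3})+(t_{2n+2}+t_{2n+4}).
\end{equation*}
Now I invoke the Thue--Morse recursion $t_{2k}=t_k$ and $t_{2k+1}\equiv 1+t_k\pmod 2$ with $k=n,n+1,n+2$ to get
\begin{equation*}
t_{2n+1}\equiv 1+t_n,\quad t_{2n+2}=t_{n+1},\quad t_{2n+3}\equiv 1+t_{n+1},\quad t_{2n+4}=t_{n+2}\pmod 2.
\end{equation*}
Substituting and collecting gives $b_{2n+1}+b_{2n+2}\equiv (1+t_n)+(1+t_{n+1})+t_{n+1}+t_{n+2}\equiv t_n+t_{n+2}\equiv b_n\pmod 2$, as required.

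There is no genuine obstacle here: the Thue--Morse recursion makes the right-hand side telescope in a single line. The only care is to align the hypotheses of Lemma~\ref{lem:1-2} by checking that the constant coefficient of $g(x)$ is odd, which I did in the first step.
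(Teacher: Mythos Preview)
Your proof is correct. It differs from the paper's in a minor but pleasant way: the paper verifies \eqref{eq:02} for $g$ by a direct generating-function computation, writing $g^{\text{odd}}$ and $g^{\text{even}}$ in terms of $h(x^2)=\sum t_ix^{2i}$ and then checking the identity $xg^{\text{odd}}(x)+g^{\text{even}}(x)\equiv 1+x^2g(x^2)$ by hand. You instead invoke Lemma~\ref{lem:1-2} to reduce \eqref{eq:02} to the coefficientwise recurrence $b_n\equiv b_{2n+1}+b_{2n+2}\pmod 2$, which drops out in one line from the Thue--Morse relations. Your route is a little more economical since it reuses Lemma~\ref{lem:1-2} rather than redoing an equivalent generating-function calculation; the paper's route is self-contained at the level of power series. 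One small remark: Lemma~\ref{lem:1-2} is stated under the hypothesis $c_0=1$ (not merely $c_0\equiv 1$), and indeed $b_0=t_0+t_2=1+0=1$ holds exactly, so the lemma applies as written.
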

\begin{proof}
Write $h(x):=\sum_{i\geq 0}t_ix^i$. Then $g(x)=h(x)+\frac{1}{x^2}\big(h(x)-1\big)$. Note that
\begin{align*}
g^{\text{odd}}(x) & = \sum_{i\geq 0}(t_{2i+1}+t_{2i+3})x^{2i+1} \\
	&\equiv \sum_{i\geq 0}(t_{i}+t_{i+1})x^{2i+1}\\
& \equiv xh(x^2)+\frac{1}{x}\Big(h(x^2)-1\Big) \pmod{2}
\end{align*}
and
\begin{align*}
g^{\text{even}}(x) & = \sum_{i\geq 0}(t_{2i}+t_{2i+2})x^{2i} \\
	&\equiv \sum_{i\geq 0}(t_{i}+t_{i+1})x^{2i}\\
& \equiv h(x^2)+\frac{1}{x^2}\Big(h(x^2)-1\Big)\pmod{2}.
\end{align*}
Therefore,
\begin{align*}
xg^{\text{odd}}(x)+g^{\text{even}}(x) & \equiv x^2h(x^2)+h(x^2)-1+h(x^2)+\frac{1}{x^2}\Big(h(x^2)-1\Big)\\
& \equiv 1+ x^2h(x^2)+\frac{1}{x^2}\Big(h(x^2)-1\Big)\\
& \equiv 1+x^2g(x^2) \pmod{2}.
\end{align*}
So $g(x)$ satisfies \eqref{eq:02}.
\end{proof}

Now we are ready to prove Theorem \ref{thm:E}.
\begin{proof}[Proof of Theorem \ref{thm:E}]
{\it The `only if' part.} Let $f(x)=\sum_{i\geq 0}c_{i}x^{i}$. We can check directly that $H_{1}(f)\equiv 1~ (\mathrm{mod}~2)$ implies $c_0\equiv 1~(\mathrm{mod}~2)$, and $H_1(f)\equiv H_2(f)\equiv 1~ (\mathrm{mod}~2)$ implies $c_0\equiv 1 ~(\mathrm{mod}~2)$ and $c_1+c_2\equiv c_0~(\mathrm{mod}~2)$. For any $k\geq 2$, suppose $H_{n}(f)\equiv 1~(\mathrm{mod}~2)$ for all $n\in [1,k]$. Then $f$ yields a $J$-fraction expansion
\begin{equation}\label{eq:j-f}
f(x)=\cfrac{1}{1+u_1x-\cfrac{x^{2}}{1+u_2x-\cfrac{x^{2}}{\qquad\cfrac{\ddots}{1+u_{k-1}x-x^{2}f_{k}(x)}}}}.
\end{equation}
Replace $f_{k}(x)$ in \eqref{eq:j-f} by $g(x)$ in Lemma \ref{lem:tm}, and denote the resulting series by $f^{\prime}(x)=\sum_{i\geq 0}c^{\prime}_{i}x^{i}$. Since $f(x)$ and $f^{\prime}(x)$ have the same $(k-1)$-th approximant, and the order of the $(k-1)$-th approximant is $2(k-1)+1$, we have $c_i=c_i^{\prime}$ for $i=0,1,\dots,2k-2$. It follows from Lemmas \ref{lem:tm} and \ref{lem:gf} that $f^{\prime}(x)$ satisfies \eqref{eq:02}. Then by Lemma \ref{lem:1-2},  $c_0=1$ and $c_i\equiv c_{2i+1}+c_{2i+2}~(\mathrm{mod}~2)$ holds for all $i\in [0,k-2]$.

{\it The `if' part.} For the converse, suppose that $c_0=1$ and $c_i\equiv c_{2i+1}+c_{2i+2}~(\mathrm{mod}~2)$ holds for all $i\geq 0$. It follows from $c_0\equiv 1~(\mathrm{mod}~2)$ that $H_1(f)\equiv 1~(\mathrm{mod}~2)$. Consequently, $f(x)$ yields a $J$-fraction expansion
\[f(x)=\cfrac{1}{1+u_1x-x^2f_1(x)}.\]
Since $f(x)$ satisfies \eqref{eq:02}, by Lemma \ref{lem:gf}, $f_1(x)$ also satisfies \eqref{eq:02}. So $f_1(x)$ has a $J$-fraction expansion
\[f_1(x)=\cfrac{1}{1+u_2x-x^2f_2(x)}.\]
Now $f_1(x)$ satisfies \eqref{eq:02}. This yields that $H_1(f_1)\equiv 1~(\mathrm{mod}~2)$ and $f_2(x)$ satisfies \eqref{eq:02}. In summary,
\[f(x)=\cfrac{1}{1+u_1x-\cfrac{x^2}{1+u_2x-x^2f_2(x)}}\]
and $f_2(x)$ satisfies \eqref{eq:02}.
Repeating the previous argument, we find the $J$-fraction expansion of~$f(x)$: \[\mathbf{J}\begin{pmatrix}
1 & 1 & 1 & \cdots\\
u_1 & u_2 & u_3 & \cdots
\end{pmatrix}\] where $u_i\in\{0,1\}$ for all $i\geq 1$. Hence $H_i(f)\equiv 1~(\mathrm{mod}~2)$ for all $i\geq 1$.
\end{proof}

\subsection{$\pm 1$ Criterion} Let $\mathbf{d}=d_0d_1d_2\cdots \in\{-1,1\}^{\infty}$. Recall that $\mathbf{d}$ is apwenian if for all $n\geq 1$, \[H_n(\mathbf{d})/2^{n-1}\equiv 1\quad (\mathrm{mod}~2).\]
Our criterion for a $\pm 1$ sequence to be apwenian is stated in Theorem \ref{thm:A}.
Before we prove Theorem \ref{thm:A}, we need a relation between $0$-$1$ apwenian sequences and $\pm 1$ apwenian sequences.
\begin{lemma}\label{lem:01+-1}
Let $\mathbf{d}=d_0d_1d_2\cdots\in\{-1,1\}^{\infty}$ and let $\mathbf{c}=c_0c_1c_2\cdots\in\{0,1\}^{\infty}$ where $c_i\equiv\frac{d_{i}-d_{i+2}}{2}\,\, (\mathrm{mod}~2)$ for all $i\geq 0$. Then $\mathbf{d}$ is $\pm1$ apwenian if and only if $\mathbf{c}$ is $0$-$1$ apwenian.
\end{lemma}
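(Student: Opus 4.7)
The plan is to deduce the lemma from the congruence
\begin{equation*}
\frac{H_n(\mathbf{d})}{2^{n-1}} \equiv H_{n-1}(\mathbf{c}) \pmod{2} \qquad (n \geq 1),
\end{equation*}
with the convention $H_0(\mathbf{c}) := 1$. Once this is established (which in particular gives the divisibility $2^{n-1}\mid H_n(\mathbf{d})$), the equivalence in the lemma is immediate, since the left-hand side is odd for every $n \ge 1$ if and only if $H_m(\mathbf{c})$ is odd for every $m \ge 0$, which—given $H_0(\mathbf{c}) = 1$—is the same as $\mathbf{c}$ being $0$-$1$ apwenian.

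To prove the congruence, I introduce the auxiliary integer sequences $e_k := (d_k - d_{k+2})/2$ and $e'_k := (d_k - d_{k+1})/2$, both valued in $\{-1,0,1\}$, and observe $c_k \equiv e_k \pmod 2$. I then perform two rounds of column operations on $M_n := (d_{i+j})_{0 \le i,j \le n-1}$. First, for $j = 0, 1, \ldots, n-3$, replace column $j$ by column $j$ minus column $j+2$; each affected entry becomes $2 e_{i+j}$, and factoring $2^{n-2}$ out produces a matrix $M'$ whose first $n-2$ columns are $e$-columns and whose last two columns are the original $d$-columns. Second, replace column $n-2$ of $M'$ by column $n-2$ minus column $n-1$; the new entries are $2 e'_{i+n-2}$, so factoring another $2$ out yields $H_n(\mathbf{d})/2^{n-1} = \det(M'')$, where $M''$ has $e$-entries in columns $0, \ldots, n-3$, $e'$-entries in column $n-2$, and $d$-entries in column $n-1$. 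The cases $n = 1$ and $n = 2$, where some of these operations are vacuous, are verified directly.

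The remaining task is to show $\det(M'') \equiv H_{n-1}(\mathbf{c}) \pmod{2}$. Setting $b_k := e'_k \bmod 2 \in \{0,1\}$, a direct calculation using $d_k \in \{-1,1\}$ yields the key recursion
\begin{equation*}
b_{k+1} \equiv b_k + c_k \pmod{2},
\end{equation*}
which telescopes to $b_{i+n-2} \equiv b_i + c_i + c_{i+1} + \cdots + c_{i+n-3}$. So, working modulo $2$, subtracting columns $0, \ldots, n-3$ from column $n-2$ replaces column $n-2$ by $(b_i)_i$; then the row operations $\mathrm{row}_i \leftarrow \mathrm{row}_i - \mathrm{row}_{i+1}$ for $i = 0, \ldots, n-2$ make column $n-1$ equal $(0, \ldots, 0, 1)^{\top}$ and change column $n-2$'s entries in rows $i \le n-2$ to $b_i - b_{i+1} \equiv c_i$. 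Expanding the determinant along column $n-1$ then reduces the problem to computing, modulo $2$, the determinant of an $(n-1)\times(n-1)$ matrix whose columns are $C_0 + C_1, C_1 + C_2, \ldots, C_{n-3} + C_{n-2}, C_0$, where $C_k$ denotes the column of the standard Hankel matrix $(c_{i+j})_{0 \le i,j \le n-2}$ indexed by offset $k$. This matrix factors as the Hankel matrix of $\mathbf{c}$ of size $n-1$ times a transition matrix with determinant $(-1)^{n-2} = \pm 1$, so its determinant is congruent to $H_{n-1}(\mathbf{c})$ modulo $2$.

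The hardest part is the careful bookkeeping of the two rounds of column operations together with the subsequent row operations, and the verification of the recursion $b_{k+1} \equiv b_k + c_k \pmod{2}$; everything else is routine linear algebra over $\mathbb{F}_2$.
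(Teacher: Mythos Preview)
Your proof is correct and follows the same overall strategy as the paper: both establish the key congruence $H_n(\mathbf{d})/2^{n-1}\equiv H_{n-1}(\mathbf{c})\pmod 2$ by elementary row and column operations on the Hankel matrix. The paper's version differs only in the bookkeeping---it introduces $b_i=(1-d_i)/2$, uses adjacent-column differences, and splits the determinant into two pieces, thereby bypassing your telescoping recursion and transition-matrix step---but the idea is the same.
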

\begin{proof} Let $b_i=\frac{1-d_i}{2}$ for all $i\geq 0$. Then
\begin{align*}
H_n(\mathbf{d})
%% 	& = \left|\begin{matrix}
%% d_0 & d_1 & \cdots & d_{n-1}\\
%% d_1 & d_2 & \cdots & d_n\\
%% \vdots & \vdots & \ddots & \vdots\\
%% d_{n-1} & d_n & \cdots & d_{2n-2}
%% \end{matrix}\right|\\
	&= \left|\begin{matrix}
1-2b_0 & 1-2b_1 & \cdots & 1-2b_{n-1}\\
1-2b_1 & 1-2b_2 & \cdots & 1-2b_n\\
\vdots & \vdots & \ddots & \vdots\\
1-2b_{n-1} & 1-2b_n & \cdots & 1-2b_{2n-2}
\end{matrix}\right|\\
& = 2^{n-1}\left|\begin{matrix}
1-2b_0 & b_0-b_1 & \cdots & b_{n-2}-b_{n-1}\\
1-2b_1 & b_1-b_2 & \cdots & b_{n-1}-b_n\\
\vdots & \vdots & \ddots & \vdots\\
1-2b_{n-1} & b_{n-1}-b_n & \cdots & b_{2n-3}-b_{2n-2}
\end{matrix}\right|\\
& = 2^{n-1}\left|\begin{matrix}
1 & b_0-b_1 & \cdots & b_{n-2}-b_{n-1}\\
1 & b_1-b_2 & \cdots & b_{n-1}-b_n\\
\vdots & \vdots & \ddots & \vdots\\
1 & b_{n-1}-b_n & \cdots & b_{2n-3}-b_{2n-2}
\end{matrix}\right| -
2^n\left|\begin{matrix}
b_0 & b_0-b_1 & \cdots & b_{n-2}-b_{n-1}\\
b_1 & b_1-b_2 & \cdots & b_{n-1}-b_n\\
\vdots & \vdots & \ddots & \vdots\\
b_{n-1} & b_{n-1}-b_n & \cdots & b_{2n-3}-b_{2n-2}
\end{matrix}\right|\\
& =  2^{n-1}\left|\begin{matrix}
1 & b_0-b_1 & \cdots & b_{n-2}-b_{n-1}\\
2 & b_0-b_2 & \cdots & b_{n-2}-b_n\\
\vdots & \vdots & \ddots & \vdots\\
2 & b_{n-2}-b_n & \cdots & b_{2n-4}-b_{2n-2}
\end{matrix}\right| -
2^n\left|\begin{matrix}
b_0 & -b_1 & \cdots & -b_{n-1}\\
b_1 & -b_2 & \cdots & -b_n\\
\vdots & \vdots & \ddots & \vdots\\
b_{n-1} & -b_n & \cdots & -b_{2n-2}
\end{matrix}\right|.
\end{align*}
Dividing both sides by $2^{n-1}$, we have
\begin{align*}
\frac{H_n(\mathbf{d})}{2^{n-1}} & =  \left|\begin{matrix}
1 & b_0-b_1 & \cdots & b_{n-2}-b_{n-1}\\
2 & b_0-b_2 & \cdots & b_{n-2}-b_n\\
\vdots & \vdots & \ddots & \vdots\\
2 & b_{n-2}-b_n & \cdots & b_{2n-4}-b_{2n-2}
\end{matrix}\right| -
2 \left|\begin{matrix}
b_0 & -b_1 & \cdots & -b_{n-1}\\
b_1 & -b_2 & \cdots & -b_n\\
\vdots & \vdots & \ddots & \vdots\\
b_{n-1} & -b_n & \cdots & -b_{2n-2}
\end{matrix}\right|\\
& \equiv
\left|\begin{matrix}
1 & b_0-b_1 & \cdots & b_{n-2}-b_{n-1}\\
0 & b_0-b_2 & \cdots & b_{n-2}-b_n\\
\vdots & \vdots & \ddots & \vdots\\
0 & b_{n-2}-b_n & \cdots & b_{2n-4}-b_{2n-2}
\end{matrix}\right| \pmod{2}\\
%% & \equiv
%% \left|\begin{matrix}
%% b_0-b_2 & \cdots & b_{n-2}-b_n\\
%%  \vdots & \ddots & \vdots\\
%% b_{n-2}-b_n & \cdots & b_{2n-4}-b_{2n-2}
%% \end{matrix}\right|\\
	&\equiv \left|\begin{matrix}
c_0 & \cdots & c_{n-2}\\
 \vdots & \ddots & \vdots\\
c_{n-2} & \cdots & c_{2n-4}
\end{matrix}\right| \pmod{2}\\
	&\equiv H_{n-1}(\mathbf{c}) \pmod{2}. \qedhere
\end{align*}
\end{proof}

\goodbreak

Lemma \ref{lem:01+-1} allows us to transfer Theorem \ref{thm:E} to be a criterion for $\pm 1$ apwenian sequences.

\begin{proof}[Proof of Theorem \ref{thm:A}]
By Theorem \ref{thm:E} and Lemma \ref{lem:01+-1},
the $\pm1$ sequence	$\mathbf{d}$ is apwenian if and only if $d_0=-d_2$ and
\begin{equation}\label{eq:cor1-1}
\forall\, n\geq 0,\quad\frac{d_n-d_{n+2}}{2}\equiv \frac{d_{2n+1}-d_{2n+3}}{2}+\frac{d_{2n+2}-d_{2n+4}}{2}\pmod{2}.
\end{equation}
Adding up the equation \eqref{eq:cor1-1}, we obtain its equivalent form: $\forall\, n\geq 0,$
\[\sum_{i=0}^{n}\frac{d_i-d_{i+2}}{2}\equiv \sum_{i=0}^{n}\left(\frac{d_{2i+1}-d_{2i+3}}{2}+\frac{d_{2i+2}-d_{2i+4}}{2}\right)\pmod{2}\]
which reduces to
\begin{equation}\label{eq:cor1-2}
\forall\, n\geq 0,\quad \frac{d_0+d_1}{2}-\frac{d_{n+1}+d_{n+2}}{2}\equiv \frac{d_1+d_2}{2}-\frac{d_{2n+3}+d_{2n+4}}{2}\pmod{2}.
\end{equation}
	So \eqref{eq:cor1-1} is equivalent to \eqref{eq:cor1-2}. It follows from the initial condition $d_0=-d_2$ that \eqref{eq:cor1-2} is exactly~\eqref{eq:-01} for $n\geq 1$. Since $d_0=-d_2$ is just \eqref{eq:-01} for $n=0$, we conclude that   \eqref{eq:cor1-2} and  \eqref{eq:-01} are equivalent.
This completes the proof.
\end{proof}
% >>>

\section{The 0-1 apwenian sequences generated by type I substitutions}\label{sec:01} % <<<
In this section, we focus on sequences generated by substitutions of constant length $p\geq 2$ on $\{0,1\}$ such that
\begin{equation*}
\sigma: 1\mapsto 1w_1\cdots w_{p-1},\quad 0\mapsto v_0v_1\cdots v_{p-1},
\end{equation*}
where $w_i, v_i\in\{0,1\}$ for all $i$. These substitutions are called of {\it type I}.
Let $\mathbf{c}=c_0c_1c_2\cdots$ be the fixed point of $\sigma$ starting with $1$.

\begin{remark*}
If $\sigma$ is a substitution such that $\sigma(1)$ is not starting with $1$, then either the substitution does not have a fixed point or the fixed point is starting with $0$ and it is not apwenian.
\end{remark*}

\begin{example}\label{eg:1}
Let $\sigma$ be the substitution $1\mapsto 10$ and $0\mapsto 11$. Its fixed point
\[\mathbf{c}=101110101011101\cdots\]
is the well known \emph{period-doubling} sequence; see for example \cite{D20}. The sequence $\mathbf{c}$ satisfies the recurrence relations $c_{2n}=1$ and $c_{2n+1}=1-c_n$ ($n\geq 0)$. Hence, by Theorem \ref{thm:E}, $\mathbf{c}$ is apwenian.
\end{example}

It is interesting to see that substitutions of constant length actually give only one $0
$-$1$ apwenian sequence, namely, the period-doubling sequence.

\begin{theorem}
Let $\mathbf{c}\in\{0,1\}^{\infty}$ be the fixed point of a type I substitution of length $p$ such that $\mathbf{c}$ is starting with $1$. Then
$\mathbf{c}$ is $0$-$1$ apwenian if and only if  $\mathbf{c}$ is the period-doubling sequence.
\end{theorem}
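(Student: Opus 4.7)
The ``if'' direction is Example~\ref{eg:1}. For the ``only if'' direction, the plan is to apply the $0$-$1$ criterion of Theorem~\ref{thm:E}: $c_n \equiv c_{2n+1} + c_{2n+2} \pmod{2}$ for all $n \ge 0$. Write $\sigma(1) = 1\,w_1 \cdots w_{p-1}$ and $\sigma(0) = v_0 \cdots v_{p-1}$, so that $c_{pk+r} = \sigma(c_k)_r$ for $k \ge 0$ and $0 \le r < p$.

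First I would dispose of the case $p = 2$ directly. The criterion at $n = 0$ reads $1 \equiv w_1 + \sigma(w_1)_0 \pmod 2$; the value $w_1 = 1$ is excluded (since $\sigma(1)_0 = 1$), so $w_1 = 0$ and $v_0 = 1$. The criterion at $n = 1$ then gives $0 \equiv v_1 + 1 \pmod 2$, forcing $v_1 = 1$, so $\sigma$ is the period-doubling substitution.

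For $p \ge 3$, my plan is to derive a contradiction by propagating the criterion through the substitution. The criterion at $n = 0$ gives $w_1 + w_2 \equiv 1 \pmod 2$, splitting the analysis into the branches $w_1 = 1$ and $w_1 = 0$. In the branch $w_1 = 1$ the criterion at $n = 1$ or $n = 2$ already yields a contradiction for $p \in \{3, 4\}$ (e.g.\ for $p = 3$: $c_3 = c_4 = 1$ as letters of $\sigma(c_1) = \sigma(1) = 110$, so $c_3 + c_4 = 0 \neq c_1 = 1$), and for $p \ge 5$ one keeps applying the criterion to propagate constraints onto $w_3, \ldots, w_{p-1}, v_0, v_1, \ldots$ until the criterion fails at an explicit position (for instance at $n = 10$ for $p = 5$). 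In the branch $w_1 = 0$ (so $w_2 = 1$) the same propagation uniquely determines the substitution (for $p = 3$, as $1 \mapsto 101$, $0 \mapsto 110$), and the criterion then fails at a later position (for $p = 3$, at $n = 13$, where $c_{13} = 0$ from the substitution whereas the criterion requires $c_{13} = c_{27} + c_{28} = 1$).

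The main obstacle will be producing a uniform argument for all $p \ge 3$, rather than a family of $p$-specific case analyses. The cleanest route I would try is to prove, as an intermediate lemma, that any apwenian fixed point of a type I substitution must satisfy $c_{2n} = 1$ for every $n \ge 0$; combined with the apwenian recurrence, this immediately forces $c_{2n+1} \equiv 1 - c_n \pmod 2$ and identifies $\mathbf{c}$ with the period-doubling sequence. The conclusion then follows by Cobham's theorem (cf.~\cite[Theorem~11.2.2]{AS03}): since period-doubling is $2$-automatic but aperiodic, it cannot be a fixed point of a constant-length-$p$ substitution unless $p$ is a power of $2$, and for $p = 2^k$ the compatible substitution is forced to be the $k$-th iterate of the period-doubling substitution. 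The delicate step is establishing $c_{2n} = 1$ for all $n$ without circular reasoning; a parallel strategy I would attempt is to combine the substitution functional equation $f(z) = U(z) f(z^p) + V(z)/(1 + z^p)$ in $\mathbb{F}_2[[z]]$ (with $U = W + V$) with the characteristic-two differential identity $z(1+z) f'(z) \equiv 1 + f(z) + z^2 f(z)^2 \pmod 2$ (which, using $f^{\mathrm{odd}}(z) = z f'(z)$ in $\mathbb{F}_2[[z]]$, is equivalent to the apwenian criterion) and show that the resulting algebraic-differential system admits no solution in $\mathbb{F}_2[[z]]$ other than the one corresponding to period-doubling.
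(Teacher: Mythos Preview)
Your proposal is a plan rather than a proof, and the plan has a structural flaw. Your strategy for $p\ge 3$ is to exhaust both branches by finding, for each fixed $p$, an index where the criterion fails. This cannot work uniformly: in the branch $w_1=0$ the period-doubling sequence itself appears whenever $p=2^k$ (as the fixed point of the $k$-th iterate of $1\mapsto 10,\ 0\mapsto 11$), so for $p=4,8,\ldots$ the criterion never fails and your ``fails at a later position'' approach collapses. You recognise this yourself when you pivot to the intermediate lemma $c_{2n}=1$, but you do not prove it; the Cobham and functional-equation detours are not needed and would only add work.

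The paper's proof targets exactly the lemma you identify, but executes it with two short uniform arguments (after reducing to $p\ge 3$ by replacing $\sigma$ with $\sigma^2$ if necessary). In the branch $c_1=1,\ c_2=0$ (your $w_1=1$) the key trick is to use $\sigma(c_0)=\sigma(c_1)$ to get $c_{p-1}=c_{2p-1}$ and $c_p=1$; the criterion at $n=p-1$ then forces $c_{2p}=0$, and since $\sigma(1)=110\cdots$ this forces $c_4=0$. Now $c_2=c_4=0$ gives $c_{2p+1}c_{2p+2}=c_{4p+1}c_{4p+2}$, so the criterion at $n=p$ and $n=2p$ yield $c_p\equiv c_{2p}\pmod 2$, contradicting $c_p=1,\ c_{2p}=0$. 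In the branch $c_1=0,\ c_2=1$ the criterion at $n=p$ gives $c_p=1$, so both $\sigma(0)$ and $\sigma(1)$ begin with $1$ and hence $c_{ip}=1$ for all $i$. Then the criterion at $n=(i+1)p-1$ compares the last letters of $\sigma(c_i)$ and $\sigma(c_{2i+1})$ and forces $c_{2i+1}\equiv c_i+1$; combined with the criterion this gives $c_{2i+2}\equiv 1$, hence $c_{2n}=1$ and $\mathbf{c}$ is period-doubling. The missing ingredient in your proposal is precisely this use of the indices $p-1$, $p$, $2p$, $(i+1)p-1$ to make the argument $p$-free.
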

\begin{proof}
Let $\mathbf{c}$ be the fixed point of $\sigma$. Then it is also the fixed point of  $\sigma^m$ for all $m\geq1$.
Hence, we always assume that the length of $\sigma$ is $p$ with $p\geq3$.
By Example \ref{eg:1}, we only have to prove the necessity.
Since $c_0c_1c_2\cdots=\mathbf{c}=\sigma(\mathbf{c})=\sigma(c_0)\sigma(c_1)\sigma(c_2)\cdots$, we have for all $i\geq 0$,
\begin{equation}\label{eq:sub-1}
\sigma(c_i)=c_{ip}c_{ip+1}\cdots c_{(i+1)p-1}.
\end{equation}

Suppose that $\mathbf{c}$ satisfies \eqref{eq:01}. Then $c_0=1$ and $c_1+c_2\equiv 1 \, (\mathrm{mod}~2)$. There are only two cases to be considered.

{\it Case 1:} $c_1=1$ and $c_2=0$. By \eqref{eq:sub-1}, $\sigma(c_0)=\sigma(c_1)=\sigma(1)$ implies \[c_{p-1}=c_{2p-1} \text{ and }c_p=c_0=1.\] By \eqref{eq:01}, $c_{p-1}\equiv c_{2p-1}+c_{2p} ~(\mathrm{mod}~ 2)$ and $c_{2p}\equiv c_{4p+1}+c_{4p+2} ~(\mathrm{mod}~2)$. Hence, $0\equiv c_{2p}\equiv c_{4p+1}+c_{4p+2}~ (\mathrm{mod}~2)$. Since $\sigma(1)=110\cdots c_{p-1}$ and $\sigma(c_4)=c_{4p}c_{4p+1}c_{4p+2}\cdots c_{5p-1}$, we have $c_4=0$. It follows from \eqref{eq:sub-1} and $c_2=c_4=0$ that
\[c_{2p+1}c_{2p+2}=c_{4p+1}c_{4p+2}.\]
So $c_p \equiv c_{2p+1}+c_{2p+2}=c_{4p+1}+c_{4p+2}\equiv c_{2p} =0 ~(\mathrm{mod}~2)$ which is a contradiction.

\smallskip

{\it Case 2:} $c_1=0$ and $c_2=1$. By \eqref{eq:sub-1}, $c_{2p}=c_{2p+2}=1$ and $c_{2p+1}=0.$ Then, by \eqref{eq:01}, $c_{p}\equiv c_{2p+1}+c_{2p+2}=1~ (\mathrm{mod}~ 2)$.  This implies that both $\sigma(0)$ and $\sigma(1)$ are starting with $1$. By \eqref{eq:sub-1}, we have for all $i\geq 0$, 
\begin{equation}\label{eq:in}
c_{ip}=1.
\end{equation}
Using \eqref{eq:01} and \eqref{eq:in}, for $i\geq 0$, we have
\[c_{(i+1)p-1}\equiv c_{2(i+1)p-1}+c_{2(i+1)p} =c_{2(i+1)p-1}+1 \pmod{2}\]
which implies that the last letter of $\sigma(c_{i})$ and $\sigma(c_{2i+1})$ are different by \eqref{eq:sub-1}. So $c_{i}\neq c_{2i+1}$. In the other words, for all $i\geq 0$,
\begin{equation}\label{eq:2i+1}
c_{2i+1}\equiv c_{i} +1 \pmod{2}.
\end{equation}
It follows from \eqref{eq:01} and \eqref{eq:2i+1} that for all $i\geq 0$,
\begin{equation}\label{eq:2i}
c_{2i+2} \equiv c_i + c_{2i+1} \equiv c_{i}+ c_{i} + 1 \equiv 1\pmod{2}.
\end{equation}
The recurrence relations \eqref{eq:2i+1} and \eqref{eq:2i} with the initial value $c_0=1$ show that $\mathbf{c}$ is the period doubling sequence.
\end{proof}
% >>>

\section{The \texorpdfstring{$\pm 1$}{+-1}  apwenian sequences generated by type II substitutions}\label{sec:1-1}  % <<<
In this section, we focus on a particular class of substitutions of constant length on the alphabet $\{-1,1\}$. Based on Theorem \ref{thm:A}, we give a detailed criterion (Theorem \ref{thm:t2}) on the substitution itself to tell whether its fixed point is apwenian or not.  This detailed criterion not only allows us to give the exact formula for the number of apwenian sequences (see Propositions \ref{prop:1} and \ref{prop:21}), but also provides a way to write down these apwenian sequences (see Remark \ref{re:p=9}). We call this particular class of substitutions of {\it type II}.
\begin{definition}[Type II substitution]  We say a substitution $\sigma$ on $\mathcal{A}=\{-1,1\}$ is of {\it type II} if it satisfies
\begin{equation*}
1\mapsto v_0v_1\cdots v_{p-1},\quad -1\mapsto \bar{v}_0\bar{v}_1\cdots \bar{v}_{p-1}
\end{equation*}
where $p\geq 2$, $v_0=1$ and $\bar{v}_i:=-v_i$ for $i=0,1,\dots,p-1$. The \emph{length} of the substitution $\sigma$ is $p$.
\end{definition}

Let $\mathbf{d}=d_0d_1d_2\cdots\in\{-1,1\}^{+\infty}$ be the fixed point of a type II substitution starting with $1$. It follows from $\mathbf{d}=\sigma(\mathbf{d})$ that for all $n\geq 0$, $\sigma(d_n)=d_{np}d_{np+1}\cdots d_{np+p-1}$. Then for all $n\geq 0$ and $j=0,1,\dots,p-1$,
\begin{equation}\label{eq:rec}
d_{np+j}=v_jd_n.
\end{equation}
Here $v_jd_n$ represents the product of integers $v_j$ and $d_n$.
Moreover, from \eqref{eq:rec}, we find that the generating function $f_p(x)=\sum_{n=0}^{+\infty}d_n x^n$ satisfies
\begin{equation}\label{eq:fp}
f_p(x)=(v_0+v_1x+\cdots+v_{p-1}x^{p-1})f_p(x^p)
\end{equation}
where $v_i\in\{-1,1\}$.
Now we introduce our criterion for $\pm 1$ apwenian sequences generated by type II substitutions.
\begin{theorem}\label{thm:t2}
Let $\mathbf{d}\in\{-1,1\}^{+\infty}$ be the fixed point of a type II substitution of length $p\geq 2$ and $\mathbf{d}$ is starting with $1$. For $m\geq p$, define $v_m:=v_j$ where $m=np+j$ with $n\geq 0$ and $j=0,1,\dots,p-1$.

$(1)$ When $p$ is odd, $\mathbf{d}$ is $\pm 1$ apwenian if and only if
\begin{equation}
\label{odd}
\frac{v_j+v_{j+1}-v_{2j+1}-v_{2j+2}}{2}  \equiv 1\pmod{2},\quad 0\leq j\leq p-2.
\end{equation}

$(2)$ When $p$ is even, $\mathbf{d}$ is $\pm 1$ apwenian if and only if  $\mathbf{d}$ is the Thue-Morse sequence.
\end{theorem}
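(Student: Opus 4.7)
The plan is to invoke Theorem~\ref{thm:A} after rewriting the apwenian criterion in a cleaner multiplicative form. For any $a,b,c,d\in\{-1,1\}$, the congruence $(a+b-c-d)/2\equiv 1\pmod 2$ is equivalent to $abcd=-1$: both simply say that the number of $-1$'s among $\{a,b,c,d\}$ is odd. So by Theorem~\ref{thm:A}, $\mathbf{d}$ is apwenian if and only if
\[
d_n\, d_{n+1}\, d_{2n+1}\, d_{2n+2} = -1 \qquad (n\geq 0),
\]
which I will call $(\star)$. I will evaluate $(\star)$ using the substitution recurrence $d_{kp+j}=v_j d_k$ together with the $p$-periodic extension $v_{kp+j}=v_j$ introduced in the statement.

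Writing $n=kp+j$ with $0\leq j\leq p-1$, I split into two cases. If $j=p-1$, then $n+1$ and $2n+2$ are block starts while $2n+1$ is a block end; the two $v_{p-1}$ factors cancel and the product collapses to $d_k d_{k+1} d_{2k+1} d_{2k+2}$, so $(\star)$ at $n$ reduces to $(\star)$ at $k$ and is automatic by induction on $n$. If $0\leq j\leq p-2$, then $d_n d_{n+1}=v_j v_{j+1}$, and the analogous computation for $d_{2n+1}d_{2n+2}$ depends on whether $2j+1$ or $2j+2$ crosses a block boundary. A crossing at $2j+1=p$ requires $p$ odd (with $j=(p-1)/2$) but is ``invisible'' because $v_0=1$, so the product still equals $v_{2j+1}v_{2j+2}$ in the periodic notation. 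A crossing at $2j+2=p$ requires $p$ even (with $j=p/2-1$) and introduces a genuine extra factor $d_{2k}d_{2k+1}$. In every other sub-case, the clean identity $d_{2n+1}d_{2n+2}=v_{2j+1}v_{2j+2}$ holds.

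Part~(1) (odd $p$) is then immediate: every sub-case gives $d_n d_{n+1} d_{2n+1} d_{2n+2}=v_j v_{j+1}v_{2j+1}v_{2j+2}$, so $(\star)$ for all $n$ is equivalent to the finite system $v_j v_{j+1}v_{2j+1}v_{2j+2}=-1$ for $0\leq j\leq p-2$, which by the initial translation is exactly~\eqref{odd}. For Part~(2) (even $p$) one obtains the same conditions for $j\neq p/2-1$, together with the exceptional constraint
\[
v_{p/2-1}\, v_{p/2}\, v_{p-1}\cdot d_{2k}\, d_{2k+1} = -1 \qquad (k\geq 0).
\]
Since $p$ is even, $2k$ and $2k+1$ always lie in a common substitution block, so $d_{2k}d_{2k+1}=v_r v_{r+1}$ with $r=2k\bmod p$ ranging over the even indices in $\{0,2,\ldots,p-2\}$; the constraint therefore forces $v_{2i}v_{2i+1}$ to be independent of $i$, with common value $c=v_0 v_1=v_1$. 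Substituting $v_{2i+1}=c\, v_{2i}$ into the non-exceptional conditions in order of increasing $j$ pins down $v_{2i}=t_i$ inductively (using the Thue--Morse recurrences $t_{2i}=t_i$ and $t_{2i+1}=-t_i$ to verify closure), and the exceptional condition then forces $c=-1$, so $v_m=t_m$ for all $0\leq m\leq p-1$. Consequently $\sigma(1)=t_0 t_1\cdots t_{p-1}$ and $\mathbf{d}$ coincides with the Thue--Morse sequence; consistency of the entire derivation requires $p$ to be a power of~$2$.

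The main obstacle lies in this last step of Part~(2): one must verify carefully that the induction forcing $v_{2i}=t_i$ actually closes for $p$ a power of $2$, and that for even $p$ not a power of~$2$ (e.g.\ $p=6$) the propagation of conditions hits an inconsistency, so that no apwenian fixed point exists---matching the zeros at $p=6,10,12,14,18,\ldots$ in the table preceding Theorem~\ref{thm:D}.
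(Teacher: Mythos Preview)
Your multiplicative reformulation of the criterion as $d_n d_{n+1} d_{2n+1} d_{2n+2}=-1$ is correct and handy, and Part~(1) goes through exactly as you describe; this is the same argument as the paper's (the paper writes it additively modulo~$2$ and makes the identical case split $j\le (p-3)/2$, $j\ge (p-1)/2$, $j=p-1$).

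For Part~(2) there is a real gap, and not only the consistency checks you flag. The step ``the exceptional condition then forces $c=-1$'' is not justified by what precedes it. Your induction over $j=0,\ldots,p/2-2$ (no wrap) does yield $v_{2i}=t_i$ regardless of $c$, but substituting $v_{2i}=t_i$ and $v_{2i+1}=c\,t_i$ into the exceptional relation $v_{p/2-1}v_{p/2}v_{p-1}\cdot c=-1$ gives, for $p=4m$, the product $v_{p/2-1}v_{p/2}v_{p-1}=t_{m-1}t_m t_{2m-1}=-t_m$, hence $c=t_m$; this is $+1$ for $p=12$. So with only the first half of the non-exceptional conditions you cannot conclude $c=-1$; the wrap-around conditions are not mere consistency checks but are needed to pin $c$ down, and the plan as written becomes circular.

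The paper bypasses this with a telescoping step that slots neatly into your multiplicative framework. Multiply together all $p-2$ non-exceptional identities $v_jv_{j+1}v_{2j+1}v_{2j+2}=-1$ (wrap-around included). In the periodic notation each index in the $v_{2j+1}v_{2j+2}$ part occurs an even number of times once the excluded $j=p/2-1$ (which contributed $v_{p-1}v_0$) is removed, so that part contributes~$1$; the $v_jv_{j+1}$ part telescopes to $v_0v_{p-1}/(v_{p/2-1}v_{p/2})$. Equating to $(-1)^{p-2}=1$ gives $v_{p/2-1}v_{p/2}v_{p-1}=v_0=1$, and now the exceptional relation reads $d_{2k}d_{2k+1}=-1$ for every $k$. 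Feeding this back into $(\star)$ yields $d_nd_{n+1}d_{2n}d_{2(n+1)}=1$; setting $e_n:=d_{2n}d_n$ gives $e_{n+1}=e_n$ with $e_0=1$, hence $d_{2n}=d_n$. Thus $\mathbf d$ is the Thue--Morse sequence directly, with no need to compute individual $v_i$'s or to run any $p$-by-$p$ verification; the power-of-$2$ phenomenon then falls out \emph{a posteriori}, since those are exactly the even lengths for which Thue--Morse is a type~II fixed point.
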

\begin{remark}
Let $p$ be even and $N_p$ be the number of apwenian series satisfying \eqref{eq:fp}.  Then, Theorem \ref{thm:t2} implies that 
$$N_p=\begin{cases}
    1,  & \text{if $p=2^k$ for some $k\geq1,$ }\\
     0, & \text{otherwise}.
\end{cases}$$
 \end{remark}

\begin{proof}[Proof of Theorem \ref{thm:t2}]
If $\mathbf{d}\in\{-1,1\}^{+\infty}$ is the fixed point of a type II substitution of length $p\geq 2$ and $\mathbf{d}$ is starting with $1$, then $\mathbf{d}$ satisfies \eqref{eq:rec}. To apply Theorem \ref{thm:A}, we need to check the relation between \eqref{eq:-01} and \eqref{odd}.

When $0\leq j\leq \frac{p-3}{2}$, we have $0\leq 2j+1<2j+2\leq p-1$. For all $n\geq 0$,
\begin{align}
&\frac{d_{np+j}+d_{np+j+1}-d_{2(np+j)+1}-d_{2(np+j)+2}}{2} \notag\\
=\, & \frac{d_{np+j}+d_{np+j+1}-d_{2np+2j+1}-d_{2np+2j+2}}{2}   \notag \\
=\, &  \frac{v_jd_{n}+v_{j+1}d_{n}-v_{2j+1}d_{2n}-v_{2j+2}d_{2n}}{2} \qquad\text{\qquad\qquad by \eqref{eq:rec}}\notag\\
=\, &  \frac{v_j+v_{j+1}}{2}d_{n}-\frac{v_{2j+1}+v_{2j+2}}{2}d_{2n} \notag\\
\equiv\, &  \frac{v_j+v_{j+1}}{2}-\frac{v_{2j+1}+v_{2j+2}}{2} \pmod{2}.\label{1}
\end{align}
When $\frac{p-1}{2}\leq j\leq p-2$, we have $p \leq 2j+1<2j+2\leq 2p-2$. For all $n\geq 0$,
\begin{align}
& \frac{d_{np+j}+d_{np+j+1}-d_{2(np+j)+1}-d_{2(np+j)+2}}{2}\notag \\
=\, &  \frac{d_{np+j}+d_{np+j+1}-d_{(2n+1)p+2j+1-p}-d_{(2n+1)p+2j+2-p}}{2}  \notag\\
=\, & \frac{v_jd_{n}+v_{j+1}d_{n}-v_{2j+1-p}d_{2n+1}-v_{2j+2-p}d_{2n+1}}{2}\qquad\text{by \eqref{eq:rec}}\notag\\
=\, & \frac{v_j+v_{j+1}}{2}d_{n}-\frac{v_{2j+1-p}+v_{2j+2-p}}{2}d_{2n+1}\notag\\
\equiv\, &  \frac{v_j+v_{j+1}}{2}-\frac{v_{2j+1}+v_{2j+2}}{2} \pmod{2}.\label{2}
\end{align}
When $j=p-1$, we have for all $n\geq 0$,
\begin{align}
& \frac{d_{np+j}+d_{np+j+1}-d_{2(np+j)+1}-d_{2(np+j)+2}}{2} \notag\\
=\, &  \frac{d_{np+p-1}+d_{np+p}-d_{2np+2p-1}-d_{2np+2p}}{2}  \notag\\
=\, &  \frac{v_{p-1}d_{n}+v_{0}d_{n+1}-v_{p-1}d_{2n+1}-v_{0}d_{2n+2}}{2}\qquad\text{\qquad by \eqref{eq:rec}}\notag\\
=\, &  v_{p-1}\frac{d_n-d_{2n+1}}{2} + v_{0}\frac{d_{n+1}-d_{2n+2}}{2}\notag\\
\equiv\, &  \frac{d_n-d_{2n+1}}{2}+\frac{d_{n+1}-d_{2n+2}}{2} \pmod{2}.\label{3}
\end{align}

$(1)$ If $p$ is odd, then $\frac{p-3}{2}$ and $\frac{p-1}{2}$ are two consecutive integers. If $\mathbf{d}$ is $\pm 1$ apwenian, then by Theorem \ref{thm:A}, we obtain \eqref{odd} from \eqref{1} and \eqref{2}. Conversely, if \eqref{odd} holds, then \eqref{1}, \eqref{2} and \eqref{3} yield \eqref{eq:-01} by induction on $n$. It follows from Theorem \ref{thm:A} that $\mathbf{d}$ is $\pm 1$ apwenian.

\medskip

$(2)$ Suppose $p$ is even and $p=2q$ with $q\geq 1$. If $\mathbf{d}$ is $\pm 1$ apwenian, then by  Theorem \ref{thm:A}, \eqref{1} and \eqref{2}, we have
$$\frac{v_j+v_{j+1}-v_{2j+1}-v_{2j+2}}{2}  \equiv 1  ~(\mathrm{mod}~2),\quad 0\leq j\leq p-2 \text{ and } j\neq q-1.$$
 Hence, $$\sum_{\substack{0\leq j\leq p-2   \\    j\neq q-1}
}\frac{v_j+v_{j+1}-v_{2j+1}-v_{2j+2}}{2}\equiv p-2\equiv 0 \pmod{2}.$$
It follows that
\begin{equation}
\frac{v_{q-1}+v_{q}}{2}\equiv \frac{v_{0}+v_{p-1}}{2} \pmod{2}.\label{4}
\end{equation}
Thus,
\begin{align}
&\quad~\frac{d_{np+q-1}+d_{np+q}-d_{2(np+q-1)+1}-d_{2(np+q-1)+2}}{2}\notag\\
& =  \frac{d_{np+q-1}+d_{np+q}-d_{2np+p-1}-d_{(2n+1)p}}{2}  \notag\\
& =  \frac{v_{q-1}d_{n}+v_{q}d_{n}-v_{p-1}d_{2n}-v_{0}d_{2n+1}}{2}\qquad\text{\qquad\qquad by \eqref{eq:rec}}\notag\\
& =  \frac{v_{q-1}+v_{q}}{2}d_{n}-\frac{v_{p-1}d_{2n}+v_{0}d_{2n+1}}{2}\notag\\
& \equiv  \frac{v_{0}+v_{p-1}}{2}d_{n}-\frac{v_{p-1}d_{2n}+v_{0}d_{2n+1}}{2}\pmod{2}\qquad\text{by \eqref{4}}\notag\\
& =  \frac{d_{n}-d_{2n+1}}{2}v_{0}+\frac{d_{n}-d_{2n}}{2}v_{p-1}\notag\\
& \equiv  \frac{d_{n}-d_{2n+1}}{2}+\frac{d_{n}-d_{2n}}{2}\pmod{2}\notag\\
&\equiv 1+\frac{d_{2n}+d_{2n+1}}{2} \pmod{2}.  \label{5}
\end{align}
If $\mathbf{d}$ is $\pm 1$ apwenian, then by \eqref{eq:-01} and \eqref{5}, we have for all $n\geq 0$, $\frac{d_{2n}+d_{2n+1}}{2}\equiv 0 ~(\mathrm{mod}~2)$. This implies that
\begin{equation}
\forall~ n\geq 0, \quad d_{2n+1}=-d_{2n}.\label{even-1}
\end{equation}
By Theorem \ref{thm:A} and \eqref{even-1}, we obtain that for all $n\geq0$,
\[\frac{d_n+d_{n+1}-d_{2n+1}-d_{2n+2}}{2}\equiv \frac{d_n+d_{n+1}+d_{2n}-d_{2n+2}}{2} \equiv 1 \pmod{2}.\]
Hence,
\[\sum_{i=0}^{n-1} \frac{d_i+d_{i+1}+d_{2i}-d_{2i+2}}{2}\equiv n \pmod{2}\]
which reduces to
\begin{equation}
\forall~n\geq 0,\quad d_{2n}=d_n.\label{even-2}
\end{equation}
It follows from \eqref{even-1} and \eqref{even-2} that $\mathbf{d}$ is the Thue-Morse sequence.
\end{proof}

\subsection{The $\pm 1$ apwenian sequences when $p$ is odd}
Let $p\geq 3$ be an odd number and let $\sigma: 1\mapsto v_0v_1,\cdots v_{p-1},\, 0\mapsto \bar{v}_0\bar{v}_1\cdots\bar{v}_{p-1}$ be a type II substitution of length $p$. The mapping $j\mapsto 2j+1~(\mathrm{mod}~p)$ defined on $\mathbb{N}$  induces the permutation
\[\tau: \begin{pmatrix}
0& 1 &\cdots& \frac{p-3}{2}& \frac{p-1}{2}& \frac{p+1}{2}& \cdots& p-2\\
1& 3& \cdots& p-2& 0& 2& \cdots& p-3
\end{pmatrix}.\]
For $j=0,1,\dots,p-2$, write \[\delta(j):=\frac{v_j-v_{j+1}}{2}.\] Then $\delta(j)\in\{0,1\}$ and \eqref{odd} is equivalent to the linear system
\begin{equation}\label{odd-2}
\begin{bmatrix}
\delta(0)\\
\vdots\\
\delta(p-2)
\end{bmatrix}+\begin{bmatrix}
\delta(\tau(0))\\
\vdots\\
\delta(\tau(p-2))
\end{bmatrix}\equiv
\begin{bmatrix}
1\\ \vdots \\ 1
\end{bmatrix}\pmod{2}.
\end{equation}
Note that \eqref{odd-2} is a linear system on the $\mathbb{F}_2$-vector space $\mathbb{F}_2^{p-1}$. Then it has either no solution or $2^k$ solutions for some $k\geq 0$.

By Theorem \ref{thm:t2}, searching for $\pm 1$ apwenian sequences that are fixed points of type II substitutions turns out to be solving the linear system \eqref{odd-2}. The following result shows that the solutions of \eqref{odd-2} can be characterized by the cycle decomposition of the permutation $\tau$.

\begin{proposition}\label{prop:1}
	Let $p\geq 3$ be an odd number. 

	$(1)$ The linear system \eqref{odd-2} has no solution if and only if there is an odd cycle in the cycle decomposition of $\tau$.
	
	$(2)$ The linear system \eqref{odd-2} has $2^k$ solutions if and only if $\tau$ can be decomposed into $k$ cycles of even length and $\tau$ contains no cycles of odd length.
	\end{proposition}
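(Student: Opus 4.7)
The plan is to read \eqref{odd-2} as the linear system
\[
\delta(j)+\delta(\tau(j))\equiv 1\pmod{2},\qquad j=0,1,\dots,p-2,
\]
over $\mathbb{F}_{2}^{p-1}$, and to analyse it one $\tau$-cycle at a time. Since the relation $\tau(j)=2j+1\pmod{p}$ defines a permutation of $\{0,1,\dots,p-2\}$ (the value $p-1$ is avoided because $2j+1\equiv -1\pmod{p}$ would force $j=p-1$), the set $\{0,1,\dots,p-2\}$ splits into a disjoint union of $\tau$-cycles $C_{1},\dots,C_{s}$. Each equation of the system only involves variables inside a single cycle, so the system is a direct sum of the subsystems attached to the cycles $C_{1},\dots,C_{s}$, and it suffices to understand each cycle in isolation.

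Fix a cycle $C=(j_{0},j_{1},\dots,j_{\ell-1})$ of length $\ell$, where $j_{i+1}=\tau(j_{i})$ and $j_{\ell}=j_{0}$. I would iterate the constraint $\delta(\tau(j))\equiv 1+\delta(j)\pmod{2}$ along the cycle to obtain
\[
\delta(j_{i})\equiv i+\delta(j_{0})\pmod{2},\qquad 0\le i\le \ell-1.
\]
Closing the cycle gives $\delta(j_{0})=\delta(j_{\ell})\equiv \ell+\delta(j_{0})\pmod{2}$, that is, $\ell\equiv 0\pmod{2}$. Hence the subsystem attached to $C$ is consistent precisely when $\ell$ is even; in that case $\delta(j_{0})$ is a free parameter in $\mathbb{F}_{2}$ and the remaining $\delta(j_{i})$ are determined, so the subsystem has exactly $2$ solutions.

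Combining the cycles, the global system \eqref{odd-2} is consistent iff every cycle of $\tau$ has even length, and in that case the number of solutions is $2^{s}$, where $s$ is the number of cycles in the decomposition of $\tau$. Both statements of the proposition follow at once: part $(1)$ is the contrapositive (at least one odd cycle forces inconsistency, and conversely absence of solutions forces some cycle subsystem to be inconsistent, hence of odd length), while part $(2)$ records the count $2^{s}=2^{k}$ under the assumption that all $s=k$ cycles are of even length.

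There is no serious obstacle; the only point that deserves care is to make sure the constraints really split as a direct sum over cycles (which is immediate since each equation $\delta(j)+\delta(\tau(j))=1$ involves only $j$ and $\tau(j)$, both of which lie in the same cycle) and to verify that fixed points of $\tau$ are excluded, so a cycle of length $1$ does not arise trivially from the ambient permutation; this is where the identity $\tau(j)=2j+1\pmod{p}$ having no fixed point in $\{0,\dots,p-2\}$ is used.
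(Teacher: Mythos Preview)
Your proof is correct and follows essentially the same approach as the paper: both decompose the system \eqref{odd-2} into independent subsystems indexed by the cycles of $\tau$, then show each cycle subsystem is consistent iff the cycle length is even, with exactly one free variable per even cycle. The paper phrases the per-cycle analysis via the ranks of the circulant coefficient and augmented matrices, whereas you iterate the recurrence $\delta(j_{i})\equiv i+\delta(j_{0})$ directly; these are equivalent, and the paper in fact records your formula as \eqref{eq:solution}. Your closing remark about $\tau$ having no fixed points is harmless but unnecessary, since a $1$-cycle is just a special odd cycle and is already covered by your parity argument.
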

	
	\begin{proof}
	Let $h$ be the number of cycles in the cycle decomposition of $\tau$. Write the cycle decomposition of $\tau$ as
	\begin{equation}\label{eq:cd-r2}
		\Bigl(x^{(1)}_1,\dots,x^{(1)}_{r_1}\Bigr)\Bigl(x^{(2)}_1,\dots,x^{(2)}_{r_2}\Bigl)\dots\Bigl(x^{(h)}_1,\dots,x^{(h)}_{r_h}\Bigr)
	\end{equation}
	where $r_i\geq 1$ ($i=1,\dots,h$) and $r_1+\dots+r_h=p-1$. We reorder the linear system \eqref{odd-2} into $h$ subsystems according to the cycles. For $\ell=1,2,\dots,h$, the $\ell$-th cycle yields the $\ell$-th subsystem 
	\begin{equation}\label{eq:subsystem-l}
		\left\{
		\begin{aligned}
			\delta(x^{(\ell)}_1) & + \delta(\tau(x^{(\ell)}_1)) \equiv 1,\\
			&\ \,\vdots\\
			\delta(x^{(\ell)}_{r_{\ell}}) & + \delta(\tau(x^{(\ell)}_{r_{\ell}}))\equiv 1,
		\end{aligned}
	\right.\text{ that is }
	\left\{
		\begin{aligned}
			\delta(x^{(\ell)}_1) & + \delta(x^{(\ell)}_2)\equiv 1,\\
			&\ \,\vdots\\
			\delta(x^{(\ell)}_{r_{\ell}}) & + \delta(x^{(\ell)}_{1})\equiv 1.
		\end{aligned}
	\right.\pmod 2
	\end{equation}
	Denote by $C_{\ell}$ and $(C_{\ell}\,|\,\mathbf{1})$ the coefficient matrix and the augmented matrix of the $\ell$-th subsystem, where $\mathbf{1}$ is the column vector (of suitable size) with all entries equal $1$, i.e., 
	\[C_{\ell}=\left(\begin{array}{ccccc}
		1 & 1 & & & \\
		& 1 & 1 & & \\
		& & \ddots & \ddots & \\
		& & & 1 & 1 \\
		1 & & & & 1 
	\end{array}\right)_{r_{\ell}\times r_{\ell}}\text{ and }\quad (C_{\ell}|\mathbf{1})=\left(\begin{array}{ccccc|c}
		1 & 1 & & & & 1  \\
		& 1 & 1 & & & 1 \\
		& & \ddots & \ddots & & \vdots \\
		& & & 1 & 1 & 1 \\
		1 & & & & 1 & 1
	\end{array}\right).\]
	Working over $\mathbb{F}_2$, the row echelon forms of $C_{\ell}$ and $(C_{\ell}\,|\,\mathbf{1})$ indicate that for $\ell=1,\dots,h$, 
	% {\color{red} \[(C_{\ell}|\mathbf{1})=\left(\begin{array}{ccccc|c}
	% 	1 & 1 & & & & 1  \\
	% 	& 1 & 1 & & & 1 \\
	% 	& & \ddots & \ddots & & \vdots \\
	% 	& & & 1 & 1 & 1 \\
	% 	1 & & & & 1 & 1
	% \end{array}\right)\xrightarrow{\text{row reduction in }\mathbb{F}_2}\left(\begin{array}{ccccc|c}
	% 	1 & 1 & & & & 1  \\
	% 	& 1 & 1 & & & 1 \\
	% 	& & \ddots & \ddots & & \vdots \\
	% 	& & & 1 & 1 & 1 \\
	% 	0 & 0 & \cdots & 0 & 0 & r_{\ell}
	% \end{array}\right)_{r_{\ell}\times (r_{\ell}+1)}.\] So, for $\ell=1,\dots,h$}
	\begin{equation}\label{eq:rank-l}
		\mathrm{rank}(C_{\ell})=r_{\ell}-1\quad\text{and}\quad
		\mathrm{rank}(C_{\ell}\,|\,\mathbf{1})=\begin{cases}
			r_{\ell}-1, & \text{ if }r_{\ell}\text{ is even},\\
			r_{\ell}, & \text{ if }r_{\ell}\text{ is odd}.
		\end{cases} 
	\end{equation}
	
	(1) Now we prove the `if' part. Suppose that $\tau$ contains a cycle of odd length, i.e., there exist $j\in\{1,\dots,h\}$ such that $r_j$ is odd. By \eqref{eq:rank-l}, $\mathrm{rank}(C_j)\neq \mathrm{rank}(C_j\,|\,\mathbf{1})$. So the $j$-th subsystem is inconsistent. This implies that the linear system \eqref{odd-2} has no solution.
	
	For the `only if' part, see the proof of the `if' part of Proposition \ref{prop:1}(2).

	(2) Recall that we write the cycle decomposition of $\tau$ as in \eqref{eq:cd-r2} and reorder the linear system \eqref{odd-2} into $h$ subsystems as in \eqref{eq:subsystem-l}. The coefficient matrix of the reordered linear system \eqref{odd-2} with variables arranged in the order \[\delta(x_1^{(1)}),\dots,\delta(x_{r_1}^{(1)}),\delta(x_{1}^{(2)}),\dots,\delta(x_{r_2}^{(2)}),\dots,\delta(x_1^{(h)}),\dots,\delta(x_{r_h}^{(h)})\] is the block  diagonal matrix $C=\mathrm{diag}(C_1,C_2,\dots,C_h)$. Note that 
	\begin{equation}\label{eq:rank-c}
		\mathrm{rank}(C)=\sum_{i=1}^{h}\mathrm{rank}(C_{i})=\sum_{i=1}^{h}(r_{i}-1)=p-1-h.
	\end{equation}
	
	{\it The `if' part.} Assume that $h=k$ and $r_{\ell}$ is even for all $\ell=1,\dots,k$. Then, by \eqref{eq:rank-l}, $\mathrm{rank}(C_{\ell})=\mathrm{rank}(C_{\ell}\,|\,\mathbf{1})=r_{\ell}-1$ for $\ell=1,\dots,k$. So the $\ell$-th subsystem is consistent and it has one free variable. Precisely, for $\ell=1,\dots,k$, the solutions of the $\ell$-th subsystem are 
	\begin{equation}\label{eq:solution}
	\delta(x^{(\ell)}_j) \equiv 1+j+\delta(x^{(\ell)}_1)\pmod{2}, \quad j=1,\dots,r_{\ell}.
	\end{equation}
	In total the linear system \eqref{odd-2} has $k$ free variables and it has exactly $2^k$ solutions. 
	
	{\it The `only if' part.} Assume that the linear system \eqref{odd-2} has $2^k$ solutions. Then all the~$h$ subsystems have to be consistent. So for $\ell=1,\dots,h$, $\mathrm{rank}(C_{\ell})=\mathrm{rank}(C_{\ell}\,|\,\mathbf{1})$ and $r_{\ell}$ is even  by \eqref{eq:rank-l}. Namely, all the cycles in the cycle decomposition of~$\tau$ are of even length. Moreover, when the linear system \eqref{odd-2} has $2^k$ solutions, there are $k$ free variables in $\eqref{odd-2}$ and $\mathrm{rank}(C)=p-1-k$. Then it follows from \eqref{eq:rank-c} that $h=k$. So, in the cycle decomposition of $\tau$, there are exactly $k$ even cycles and no odd cycles.
	\end{proof}

\begin{remark}\label{re:p=9}
Once we have the cycle decomposition of $\tau$, we can read all type II apwenian substitutions from \eqref{eq:solution}. For example, when $p=9$, \[\tau=(1,3,5,7,0,2,4,6,8)=(0,1,3,7,6,4)(2,5).\]
Write $(\delta(0),\delta(2))=(x,y)\in\{0,1\}^{2}$. By \eqref{eq:solution},
\[\delta=(x,\,1-x,\,y,\,x,\,1-x,\,1-y,\,x,\,1-x).\]
Recall that $\delta(j)=\frac{v_j-v_{j+1}}{2}$. While the initial value is $v_0=1$, there are only four type II apwenian substitutions:
\[
\begin{array}{c|*{9}{r}}
(x,y) & v_0 & v_1 & v_2 & v_3 & v_4 & v_5 & v_6 & v_7 & v_8\\[.1em]
\hline
(0,0) & 1& 1 & -1 & -1 & -1 & 1 & -1 & -1 & 1\\
(0,1) & 1 & 1 & -1 & 1 & 1 & -1 & -1 & -1 & 1\\
(1,0) & 1 & -1 & -1 & -1 & 1 & 1 & -1 & 1 & 1\\
(1,1) & 1 & -1 & -1 & 1 & -1 & -1 & -1 & 1 & 1
\end{array}\]
\end{remark}

\subsection{The cycle decomposition of $\tau$ when $p$ is odd}\label{sec:tau}
Let $p\geq3$ be an odd number. Recall that 
\[\mu(p):=\mathrm{ord}_p(2)=\min\{j\in[1,p-1] :  p|(2^{j}-1)\}.\]
By the minimality of $\mu(p)$, we know that $\mu(p)|m$ if $p|(2^m-1)$. Hence,
by Euler's theorem,  $\mu(p)|\phi(p)$, where $\phi(p)$ is the Euler function. In particular, $\mu(p)|(p-1)$ if $p$ is prime.

Recall that $\tau$ is the permutation on $\{0,1,2,\dots,p-2\}$ induced by $j\mapsto 2j+1~(\mathrm{mod}~p):$
\[\tau: \begin{pmatrix}
0& 1 &\cdots& \frac{p-3}{2}& \frac{p-1}{2}& \frac{p+1}{2}& \cdots& p-2\\
1& 3& \cdots& p-2& 0& 2& \cdots& p-3
\end{pmatrix}.\]
We obtain the following explicit formula for the number of cycles in the cycle decomposition of~$\tau$.
\begin{proposition}\label{prop:21}
Let $p\geq 3$ be an odd number. There are $k$ cycles in the cycle decomposition of $\tau$, where
\begin{align}
k & = \frac{1}{\mu(p)}\sum_{j=0}^{\mu(p)-1}\#\{n\in [0,p-2] :  (2^{j}-1)(n+1)\equiv 0\quad (\mathrm{mod}~p)\}\nonumber\\
& = \frac{1}{\mu(p)}\sum_{j=0}^{\mu(p)-1}\gcd(2^{j}-1,p) -1.\label{eq:p-3}
\end{align}
\end{proposition}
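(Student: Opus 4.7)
The plan is to apply the Cauchy--Frobenius (Burnside) orbit counting lemma. The first step is a relabelling: the bijection $m=j+1$ carries $\{0,1,\dots,p-2\}$ onto $\{1,2,\dots,p-1\}$, and under it $\tau$ becomes the doubling map $\mathbf{2}\colon m\mapsto 2m\pmod{p}$ restricted to $\{1,\dots,p-1\}$, because $\tau(j)+1=2j+2=2(j+1)\pmod{p}$. Since $p$ is odd, multiplication by $2$ is a permutation of $\mathbb{Z}/p\mathbb{Z}$ fixing only $0$, hence it permutes $\{1,\dots,p-1\}$. Thus $\tau$ is conjugate to this doubling map, and it suffices to count the cycles of the doubling map.

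Next I would determine the order of $\langle\tau\rangle$. Iterating, $\tau^{j}$ corresponds to $m\mapsto 2^{j}m\pmod{p}$, and this is the identity on $\{1,\dots,p-1\}$ iff $2^{j}\equiv 1\pmod{p}$, i.e.\ iff $\mu(p)\mid j$. Hence $|\langle\tau\rangle|=\mu(p)$, and Burnside's formula gives
\[k=\frac{1}{\mu(p)}\sum_{j=0}^{\mu(p)-1}\#\mathrm{Fix}(\tau^{j}).\]
A point $n\in\{0,\dots,p-2\}$ is fixed by $\tau^{j}$ exactly when $(2^{j}-1)(n+1)\equiv 0\pmod{p}$, which is the first equality in \eqref{eq:p-3}.

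For the second equality I would evaluate each fixed-point count. For $j\geq 1$, the set $\{m\in\mathbb{Z}/p\mathbb{Z}:(2^{j}-1)m\equiv 0\pmod{p}\}$ is the kernel of multiplication by $2^{j}-1$ on $\mathbb{Z}/p\mathbb{Z}$, which has cardinality $\gcd(2^{j}-1,p)$; excluding $m=0$ leaves $\gcd(2^{j}-1,p)-1$ admissible values $m=n+1\in\{1,\dots,p-1\}$. For $j=0$ the congruence is trivial and all $p-1$ values of $n$ qualify, which agrees with the same formula under the convention $\gcd(0,p)=p$. Summing and pulling the constant $-1$ out of each summand yields \eqref{eq:p-3}. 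The argument is essentially bookkeeping once one recognises that $\tau$ is conjugate to the doubling map; the only mild care needed is a uniform treatment of the $j=0$ term, so I do not anticipate any serious obstacle.
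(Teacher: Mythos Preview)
Your proof is correct and follows essentially the same route as the paper: both compute $\tau^{j}(n)\equiv 2^{j}n+2^{j}-1\pmod p$ (you phrase this via the conjugation $m=n+1$ to the doubling map), apply Burnside's lemma with $|\langle\tau\rangle|=\mu(p)$, and then evaluate $\#\mathrm{Fix}(\tau^{j})=\gcd(2^{j}-1,p)-1$. Your relabelling to the doubling map is a small cosmetic simplification, but the argument is otherwise identical.
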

\begin{proof}
Since $2^{\mu(p)}\equiv 1~(\mathrm{mod}~p)$, we have for any $m\in\mathbb{N}$, $2^{\mu(p)}m\equiv m~(\mathrm{mod}~p)$. Note that $\tau^j(n)\equiv2^jn+2^j-1~(\mathrm{mod}~p)$ for any $j\geq0$. Hence $\tau^{\mu(p)}=\mathrm{id}$. The subgroup generated by $\tau$ is
\[G:=\{\tau^{0},\tau,\tau^{2},\dots,\tau^{\mu(p)-1}\}.\]
The number of orbits of the mapping
\[\begin{array}{ccc}
G\times\{0,1,2,\dots,p-2\} & \rightarrow & \{0,1,2,\dots,p-2\}\\
g\times x & \mapsto & g(x)
\end{array}\]
	is equal to the number of cycles in the cycle decomposition of $\tau$. Denote by $\mathrm{Fix}(\tau^{j})$ the set of fixed points of $\tau^{j}$.  By P\'{o}lya's enumeration theorem or its special case --- Burnside's lemma \cite[Theorem 3.22]{Rotman},
\begin{align}
k & = \frac{1}{\mu(p)}\sum_{j=0}^{\mu(p)-1}\# \mathrm{Fix}(\tau^{j})\nonumber\\
& =  \frac{1}{\mu(p)}\sum_{j=0}^{\mu(p)-1}\#\{n\in [0,p-2] :  (2^{j}-1)(n+1)\equiv 0\quad (\mathrm{mod}~p)\}.\label{eq:p-2}
\end{align}
For any fixed $j$, let $\gcd(2^{j}-1,p)=:d$. Then
\[2^{j}-1=s\cdot d \quad \text{and}\quad p=t\cdot d\]
where $\gcd(s,t)=1$. For $n\in [0,p-2]$, we have
\[(n+1)\cdot (2^{j}-1)\equiv 0 ~(\mathrm{mod}~ p)\implies (n+1)\cdot(s\cdot d)\equiv 0~(\mathrm{mod}~ p) \implies n+1= m\cdot t.\]
While $1\leq n+1\leq p-1$ and $n+1=mt$, we have $1\leq m\leq d-1$. So
\begin{align*}
& \#\{n\in [0,p-2] :  (n+1)(2^{j}-1)\equiv 0\quad (\mathrm{mod}~p)\}\\
= &  \#\{m\in [1,d-1] :  (m\cdot t)\cdot (s\cdot d)\equiv 0\quad (\mathrm{mod}~p)\}\\
= & d-1 = \gcd(2^j-1,p)-1.
\end{align*}
By \eqref{eq:p-2}, we derive 
\[
k=\frac{1}{\mu(p)}\sum_{j=0}^{\mu(p)-1}(\gcd(2^j-1,p)-1) = \frac{1}{\mu(p)}\sum_{j=0}^{\mu(p)-1}\gcd(2^j-1,p)-1.\qedhere
\]
\end{proof}

It follows from \eqref{eq:p-3} that
\begin{equation}
\label{computaion of k}
k=\frac{1}{\mu(p)}\sum_{d\,|\,p}d\times \#\left\{j\in [0,\mu(p)-1] :  \gcd(2^j-1,p)=d\right\}-1.
\end{equation}
For example, when $p=9$, we have $\mu(9)=6$ and
\[
\begin{array}{r|*{6}{c}}
j & 0 & 1 & 2 & 3 & 4 & 5\\
2^j~(\mathrm{mod}~9)  & 1 & 2 & 4 & 8 & 7 & 5\\
2^j-1~(\mathrm{mod}~9)  & 0 & 1 & 3 & 7 & 6 & 4\\
d  & 9 &  1 &  3 &  1 &  3 &  1
\end{array}
\]
So $k=\frac{1}{6}(1\times 3+3\times 2+9\times 1)-1=2$.

\medskip
Now, we are ready to prove Theorem \ref{thm:D}.
\begin{proof}[Proof of Theorem \ref{thm:D}]
According to Theorem \ref{thm:t2} (1), $N_p$ is equal to the number of solutions of (\ref{odd}) which is equivalent to the linear system (\ref{odd-2}). If $\tau$ contains an odd cycle, then by Proposition \ref{prop:1}(1), $N_p=0$. The remaining case follows from Propositions \ref{prop:1}(2) and \ref{prop:21}.
\end{proof}

\begin{corollary}\label{coro2}
Let $p\geq 3$ be an odd number and the cycle decomposition of $\tau$ has $k$ cycles.
\begin{enumerate}
\item If $p=p_1^{\ell}$, where $p_1$ is prime with $\mu(p_1^2)=\mu(p_1)p_1$ and $\ell\geq1$ is an integer, then $k=\frac{p_1-1}{\mu(p_1)}\ell$.
\item If $p=p_1p_2$, where $p_1,p_2$ are prime and $p_1\neq p_2$, then $k=\frac{(p_1-1)(p_2-1)}{\mathrm{lcm}(\mu(p_1),\mu(p_2))}+\frac{p_1-1}{\mu(p_1)}+\frac{p_2-1}{\mu(p_2)}$ where $\mathrm{lcm}(a,b)$ is the least common multiple of $a$ and $b$.
\end{enumerate}
\end{corollary}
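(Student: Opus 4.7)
The plan is to apply the identity \eqref{computaion of k}, which rewrites
\[k = \frac{1}{\mu(p)}\sum_{d \mid p} d \cdot N(d) - 1,\]
where $N(d) := \#\{j \in [0, \mu(p)-1] :  \gcd(2^j - 1, p) = d\}$. The key elementary observation, used throughout, is that for any divisor $d$ of $p$ one has $d \mid (2^j - 1)$ if and only if $\mu(d) \mid j$; since $d \mid p$ forces $\mu(d) \mid \mu(p)$, the number of such $j$ in $[0, \mu(p) - 1]$ is exactly $\mu(p)/\mu(d)$.

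For part (1), I would first establish, as a preliminary lemma, that the hypothesis $\mu(p_1^2) = p_1 \mu(p_1)$ forces $\mu(p_1^i) = p_1^{i-1} \mu(p_1)$ for every $i \geq 1$, so in particular $\mu(p) = p_1^{\ell-1}\mu(p_1)$. This is standard lifting: writing $2^{\mu(p_1)} = 1 + K p_1$ with $\gcd(K, p_1) = 1$ (the equivalent form of the hypothesis), one expands $(1 + K p_1)^{p_1^{i-1}}$ modulo $p_1^{i+1}$ and proceeds by induction on $i$, noting that $p_1$ is odd. Granted this, for each $0 \leq i < \ell$ one computes by subtraction
\[N(p_1^i) = \frac{\mu(p)}{\mu(p_1^i)} - \frac{\mu(p)}{\mu(p_1^{i+1})}, \qquad N(p_1^\ell) = 1,\]
with the convention $\mu(p_1^0) = 1$. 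Summing $\sum_{i=0}^\ell p_1^i N(p_1^i)$ and regrouping (the middle indices $1 \leq i \leq \ell-1$ each contribute $p_1^\ell - p_1^{\ell-1}$), the total simplifies to $\mu(p_1) p_1^{\ell - 1} + \ell \,p_1^{\ell-1}(p_1 - 1)$. Dividing by $\mu(p) = \mu(p_1) p_1^{\ell - 1}$ and subtracting $1$ yields $k = \ell(p_1-1)/\mu(p_1)$.

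For part (2), the divisors of $p = p_1 p_2$ are $1, p_1, p_2, p_1 p_2$, and $\mu(p) = L := \mathrm{lcm}(\mu(p_1), \mu(p_2))$. The key observation gives that the number of $j \in [0,L-1]$ with $p_1 \mid (2^j-1)$ is $L/\mu(p_1)$, with $p_2 \mid (2^j-1)$ is $L/\mu(p_2)$, and with both simultaneously is $L/L = 1$. Inclusion–exclusion then produces
\[N(p_1 p_2) = 1,\quad N(p_1) = \tfrac{L}{\mu(p_1)} - 1,\quad N(p_2) = \tfrac{L}{\mu(p_2)} - 1,\quad N(1) = L - \tfrac{L}{\mu(p_1)} - \tfrac{L}{\mu(p_2)} + 1.\]
Substituting into the formula for $k$, the constant part collects into $p_1 p_2 - p_1 - p_2 + 1 = (p_1-1)(p_2-1)$ (whose contribution after division by $L$ is the first summand), while the $L$-proportional terms assemble into $L(1 + (p_1-1)/\mu(p_1) + (p_2-1)/\mu(p_2))$, yielding the remaining two summands after cancellation with the $-1$.

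The main obstacle is the order-lifting lemma $\mu(p_1^i) = p_1^{i-1}\mu(p_1)$ underlying case (1): its induction step requires controlling the $p$-adic binomial expansion of $(1+Kp_1)^{p_1^{i-1}}$ modulo $p_1^{i+1}$, in particular ensuring that the term $\binom{p_1^{i-1}}{2}(Kp_1)^2$ is absorbed (which is where the oddness of $p_1$ enters, through the factor of $\tfrac{1}{2}$). Once this is in hand, both cases are routine arithmetic with the explicit formula for $N(d)$.
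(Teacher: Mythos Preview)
Your proposal is correct and follows essentially the same route as the paper: both arguments feed the explicit counts $N(d)$ into formula \eqref{computaion of k}, using the observation that $d\mid(2^j-1)$ iff $\mu(d)\mid j$, and both rely on the two auxiliary facts $\mu(p_1^i)=p_1^{i-1}\mu(p_1)$ (under the stated hypothesis) and $\mu(p_1p_2)=\mathrm{lcm}(\mu(p_1),\mu(p_2))$, which the paper records as a separate lemma. The only difference is cosmetic: for the order-lifting step the paper argues directly that $p^{k+1}\nmid(2^{\mu(p^k)}-1)$ by expanding $2^{\mu(p^k)p}-1$ as a product and analysing the second factor modulo $p^2$, whereas you invoke the standard $p$-adic binomial expansion of $(1+Kp_1)^{p_1^{i-1}}$; both are routine and exploit the oddness of $p_1$ in the same place.
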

We remark that some primes $p$ do not satisfy $\mu(p^2)=\mu(p)p$, for example, the Wieferich primes~\cite{Wie09}. To prove Corollary \ref{coro2}, we need the following lemma.
\begin{lemma}\label{mu}
Let $p\geq 3$ be prime with $\mu(p^{s})=\mu(p^{s-1})p$ for some integer $s\ge 2$. Let $p_1$ and $p_2$ be prime with $p_1\neq p_2$.
\begin{enumerate}
 \item $\mu(p^\ell)=\mu(p^{\ell-1})p$ for all $\ell\geq s$.
  \item $\mu(p_1p_2)=\mathrm{lcm}(\mu(p_1),\mu(p_2))$.
\end{enumerate}
\end{lemma}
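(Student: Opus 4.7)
The plan is to use two standard order-theoretic tools: the lifting-the-exponent (LTE) lemma for odd primes to control how $\mu(p^\ell)$ grows with $\ell$, and the Chinese remainder theorem to split $\mu(p_1 p_2)$.

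For part (1), I would first derive a closed-form description of $\mu(p^\ell)$. Set $m := \mu(p)$ and $c := v_p(2^m - 1) \geq 1$, where $v_p$ is the $p$-adic valuation. Since $p$ is an odd prime dividing $2^m - 1$, LTE gives, for every positive integer $k$,
\[
v_p(2^{mk} - 1) = v_p\bigl((2^m)^k - 1^k\bigr) = v_p(2^m - 1) + v_p(k) = c + v_p(k).
\]
Any $n$ with $p^\ell \mid 2^n - 1$ must be a multiple of $m$, so writing $n = mk$, the integer $\mu(p^\ell)$ equals $m$ times the smallest $k \geq 1$ for which $c + v_p(k) \geq \ell$. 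This smallest $k$ is $p^{\max(0,\,\ell - c)}$, giving
\[
\mu(p^\ell) = m \cdot p^{\max(0,\,\ell - c)}.
\]
From this formula, $\mu(p^\ell) = \mu(p^{\ell-1}) \cdot p$ holds precisely when $\ell \geq c + 1$. The hypothesis $\mu(p^s) = \mu(p^{s-1})\,p$ therefore forces $s \geq c+1$, and then every $\ell \geq s$ also satisfies $\ell \geq c+1$, yielding (1).

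For part (2), the Chinese remainder theorem gives a group isomorphism
\[
(\mathbb{Z}/p_1 p_2 \mathbb{Z})^{\times} \cong (\mathbb{Z}/p_1 \mathbb{Z})^{\times} \times (\mathbb{Z}/p_2 \mathbb{Z})^{\times},
\]
under which the class of $2$ maps to the pair of its reductions. In any direct product of finite groups, the order of an element $(x,y)$ equals $\mathrm{lcm}(\mathrm{ord}(x), \mathrm{ord}(y))$. Applied to the image of $2$, this gives immediately
\[
\mu(p_1 p_2) = \mathrm{ord}_{p_1 p_2}(2) = \mathrm{lcm}\bigl(\mathrm{ord}_{p_1}(2),\,\mathrm{ord}_{p_2}(2)\bigr) = \mathrm{lcm}(\mu(p_1), \mu(p_2)).
\]

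Neither part presents a real obstacle; the only slightly delicate point is the translation in part (1) of the hypothesis $\mu(p^s) = \mu(p^{s-1}) p$ into the threshold condition $s \geq c+1$, which is what makes the induction on $\ell$ automatic from LTE. If one prefers to avoid invoking LTE as a black box, one can instead induct directly on $\ell$: starting from $2^{\mu(p^{\ell-1})} \equiv 1 + A\,p^{\ell-1} \pmod{p^{\ell}}$ with $p \nmid A$ (which is exactly what $\mu(p^\ell) = \mu(p^{\ell-1})\,p$ encodes at step $\ell$), a direct binomial expansion of $(1 + A p^{\ell-1})^p$ modulo $p^{\ell+1}$ reproduces the same non-vanishing condition at step $\ell+1$, using that $p$ is odd.
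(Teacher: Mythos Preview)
Your argument is correct. Both parts are standard, and you have handled them cleanly.

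The paper proceeds differently in presentation. For part~(1), instead of invoking LTE to obtain the closed form $\mu(p^\ell)=\mu(p)\cdot p^{\max(0,\ell-c)}$ and reading off the threshold $\ell\ge c+1$, the paper runs an explicit induction: it factors $2^{\mu(p^k)}-1=(2^{\mu(p^{k-1})}-1)\cdot P(k)$ and shows by hand that $p^2\nmid P(k)$, whence $p^{k+1}\nmid 2^{\mu(p^k)}-1$ and $\mu(p^{k+1})=\mu(p^k)\,p$. This is precisely the ``direct binomial expansion'' alternative you sketch in your final paragraph. For part~(2), the paper argues by bare divisibility (showing $\mu(p_1p_2)\mid L$ and $L\mid\mu(p_1p_2)$ separately) rather than quoting the Chinese remainder isomorphism. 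Your route is shorter and more conceptual; the paper's is more self-contained and avoids naming LTE or CRT as black boxes. The mathematical content is the same.
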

\begin{proof}

(1) The conclusion holds for $\ell=s$. Assume that the conclusion holds for $\ell =k$ where $k\ge s$, we prove the case $\ell=k+1$. 
We first show that $p^{k+1}\nmid(2^{\mu(p^{k})}-1)$. By the inductive assumption, $\mu(p^k)=\mu(p^{k-1})p$. Then $p^{k}\nmid (2^{\mu(p^{k-1})}-1)$ and 
\begin{align*}
	2^{\mu(p^k)}-1 & =2^{\mu(p^{k-1})p}-1 = (2^{\mu(p^{k-1})}-1)P(k),%\\
	% & = \left(2^{\mu(p^{k-1})}-1\right)\left(1+2^{\mu(p^{k-1})}+\dots+2^{(p-1)\mu(p^{k-1})}\right)\\
	% & = \left(2^{\mu(p^{k-1})}-1\right)\left(p+2^{\mu(p^{k-1})}-1+\dots+2^{(p-1)\mu(p^{k-1})}-1\right).
\end{align*}
where $P(k)=(p+2^{\mu(p^{k-1})}-1+\dots+2^{(p-1)\mu(p^{k-1})}-1)$. When $k=2$, since $p | (2^{\mu(p)}-1)$ and $p^2\nmid (2^{\mu(p)}-1)$, we can write $2^{\mu(p)}=hp+1$ with $p\nmid h$. Then 
\begin{align*}
	P(2) & = \bigl(p+hp+hp(1+2^{\mu(p)})+\dots+hp(1+2^{\mu(p)}+\dots+2^{(p-2)\mu(p)})\bigr)\\
	& = \bigl(p+hp+hp(2+2^{\mu(p)}-1)+\dots+hp(p-1+2^{\mu(p)}-1+\dots+2^{(p-2)\mu(p)}-1)\bigr)\\
	& = \bigl(p+\frac{p-1}{2}p^2h+hp(2^{\mu(p)}-1)+\dots+hp(2^{\mu(p)}-1+\dots+2^{(p-2)\mu(p)}-1)\bigr).
\end{align*}
Note that $p | (2^{m\mu(p)}-1)$ for all $m\geq 1$. We have $p^2\nmid P(2)$. So $p^{3}\nmid(2^{\mu(p^{2})}-1)$. When $k\geq 3$, we have $p^2 | (2^{m\mu(p^{k-1})}-1)$ for all $m\geq 1$. Hence, $p^2\nmid P(k)$. Consequently, $p^{k+1}\nmid(2^{\mu(p^{k})}-1)$.

Since $p^{k}|p^{k+1}$, we have $\mu(p^{k})|\mu(p^{k+1})$. Suppose that $\mu(p^{k+1})=\mu(p^{k})x$ with $x\geq1$. Note that
\begin{align*}
2^{\mu(p^{k})x}-1 % & = \left(2^{\mu(p^{k})}\right)^x-1\\
&  = (2^{\mu(p^{k})}-1)(1+2^{\mu(p^{k})}+2^{\mu(p^{k})2}+\cdots+2^{\mu(p^{k})(x-1)})\\
&  = (2^{\mu(p^{k})}-1)(x+2^{\mu(p^{k})}-1+2^{\mu(p^{k})2}-1+\cdots+2^{\mu(p^{k})(x-1)}-1).
\end{align*}
Since $p^{k+1}|(2^{\mu(p^{k})x}-1)$, $p^{k}|(2^{\mu(p^{k})}-1)$ and $p^{k+1}\nmid(2^{\mu(p^{k})}-1)$, then \[p|(x+2^{\mu(p^{k-1})}-1+2^{\mu(p^{k-1})2}-1+\cdots+2^{\mu(p^{k-1})(x-1)}-1).\]
Since $p|(2^{\mu(p^{k-1})m}-1)$ for all $m\geq1$, we have $p|x$. Hence, by the definition of $\mu(p^{k+1})$, we have $x=p$. Finally $\mu(p^{k+1})=\mu(p^{k})p$.

(2) Let $L=\mathrm{lcm}(\mu(p_1),\mu(p_2))$ and suppose $L=\mu(p_1)x=\mu(p_2)y$ where $x,y\in\mathbb{N}$. Then 
\begin{align*}
	2^L-1 & = (2^{\mu(p_1)}-1)(1+2^{\mu(p_1)}+\dots+2^{(x-1)\mu(p_1)})\\
	& = (2^{\mu(p_2)}-1)(1+2^{\mu(p_2)}+\dots+2^{(y-1)\mu(p_2)}).
\end{align*}
Since $p_1|(2^{\mu(p_1)}-1)$, $p_2|(2^{\mu(p_2)}-1)$ and $p_1\neq p_2$, we have $p_1p_2 | (2^L-1)$. Therefore, $\mu(p_1p_2) | L$. 
On the other hand, since $p_1|p_1p_2$ and $p_2|p_1p_2$, we have $\mu(p_1)|\mu(p_1p_2)$ and  $\mu(p_2)|\mu(p_1p_2)$. Hence, $L| \mu(p_1p_2)$. So that $L=\mu(p_1p_2)$.
\end{proof}

\begin{proof}[Proof of Corollary \ref{coro2}]
(1) If $d|p_1^\ell$, where $p_1$ is prime, then $d=p_1^i$ with $0\leq i\leq \ell.$
Note that for every odd number $p'\geq 3$, if $p'|(2^m-1)$, then there exists an integer $k$ such that $m=k\mu(p')$.  Hence, for any fixed  $i\in [0,\ell-1]$, if $p_1^i|(2^j-1)$, then $j=k\mu(p_1^i)$ for some $k$. It follows that \[\#\left\{j\in [0,\mu(p)-1] :  p_1^i|(2^j-1)\right\}=\frac{\mu(p)}{\mu(p_1^i)}.\]

For any fixed $i\in [0,\ell-1]$, by the fact that $\gcd(2^j-1,p)=p_1^i$ if and only if $p_1^i|(2^j-1)$ and $p_1^{i+1}\nmid (2^j-1)$, we have  \[\#\left\{j\in [0,\mu(p)-1] :  \gcd(2^j-1,p)=p_1^i\right\}=\frac{\mu(p)}{\mu(p_1^i)}-\frac{\mu(p)}{\mu(p_1^{i+1})}\] where $\mu(1)=1$.  By Lemma \ref{mu}, for any fixed  $i\in [1,\ell-1]$, we have
$$\#\left\{j\in [0,\mu(p)-1] :  \gcd(2^j-1,p)=p_1^i\right\}=p_1^{\ell-i-1}(p_1-1).$$
Note that \[\#\left\{j\in [0,\mu(p)-1] :  \gcd(2^j-1,p)=p\right\}=1\] and \[\#\left\{j\in [0,\mu(p)-1] :  \gcd(2^j-1,p)=1\right\}=\mu(p)-p_1^{\ell-1}.\] By \eqref{computaion of k},  we have $k=\frac{p_1-1}{\mu(p_1)}\ell$.

(2) If $d|p_1p_2$, where $p_1\neq p_2$ are prime, then $d\in\{1,p_1,p_2,p\}$. Similarly, we have
\begin{align*}
    \#\left\{j\in [0,\mu(p)-1] :  \gcd(2^j-1,p)=p_1\right\}   &  =\frac{\mu(p)}{\mu(p_1)}-1, \\
\#\left\{j\in [0,\mu(p)-1] :  \gcd(2^j-1,p)=p_2\right\}   &  =\frac{\mu(p)}{\mu(p_2)}-1.
\end{align*}
Note that $\#\left\{j\in [0,\mu(p)-1] :  \gcd(2^j-1,p)=p\right\} =1$. We have
$$\#\left\{j\in [0,\mu(p)-1] :  \gcd(2^j-1,p)=1\right\} =\mu(p)-1-\left(\frac{\mu(p)}{\mu(p_1)}-1\right)-\left(\frac{\mu(p)}{\mu(p_2)}-1\right).$$ By \eqref{computaion of k} and Lemma \ref{mu},  we have $k=\frac{(p_1-1)(p_2-1)}{\mathrm{lcm}(\mu(p_1),\mu(p_2))}+\frac{p_1-1}{\mu(p_1)}+\frac{p_2-1}{\mu(p_2)}$.
\end{proof}

The following proposition gives a description of $p$ when there is an odd cycle in the cycle decomposition of $\tau$. 
\begin{proposition}\label{prop:4}
Let $p\geq 3$ be an odd number.  There is an odd cycle in the cycle decomposition of $\tau$ if and only if $p=mp_1$ for some $m\geq 1$, where $\mu(p_1)$ is odd.
\end{proposition}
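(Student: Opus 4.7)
The plan is to convert the odd-cycle question into a divisibility question about $\mu$ on the divisors of $p$, and then pass from a general divisor to a prime divisor using the monotonicity of $\mu$ under divisibility.

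\textbf{Step 1: Compute cycle lengths of $\tau$ explicitly.} A direct induction (already used implicitly in the proof of Proposition \ref{prop:21}) gives $\tau^{j}(n)\equiv 2^{j}n+2^{j}-1\pmod p$. Hence the cycle of $n$ has length equal to the smallest $j\geq 1$ with
\[
(2^{j}-1)(n+1)\equiv 0\pmod p.
\]
Setting $d:=\gcd(n+1,p)$ and $p':=p/d$, this congruence is equivalent to $p'\mid 2^{j}-1$. By minimality the cycle length of $n$ is exactly $\mu(p')$.

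\textbf{Step 2: Identify the set of cycle lengths.} As $n$ ranges over $\{0,1,\dots,p-2\}$, the integer $n+1$ ranges over $\{1,\dots,p-1\}$, so $d=\gcd(n+1,p)$ ranges over every proper divisor of $p$; conversely every proper divisor of $p$ is attained (take $n+1=d$). Therefore the set of cycle lengths appearing in the cycle decomposition of $\tau$ is exactly
\[
\{\mu(p') :  p'\mid p,\ p'>1\}.
\]
In particular, $\tau$ contains an odd cycle if and only if there exists a divisor $p'>1$ of $p$ with $\mu(p')$ odd.

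\textbf{Step 3: Reduce to prime divisors.} The key elementary fact is: if $a\mid b$ and both are odd integers $\geq 3$, then $\mu(a)\mid\mu(b)$ (since $a\mid b\mid 2^{\mu(b)}-1$ forces $\mu(a)\mid\mu(b)$ by the minimality of $\mu(a)$). With this in hand the equivalence is immediate. For the ``if'' direction, if $p_1$ is a prime divisor of $p$ with $\mu(p_1)$ odd, then $p'=p_1$ itself is a divisor of $p$ with $\mu(p')$ odd, so by Step 2 there is an odd cycle. For the ``only if'' direction, suppose some divisor $p'>1$ of $p$ has $\mu(p')$ odd, and pick any prime divisor $p_1$ of $p'$; then $p_1\mid p$ and $\mu(p_1)\mid \mu(p')$, so $\mu(p_1)$ divides an odd number and is therefore odd. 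Writing $p=mp_1$ with $m\geq 1$ finishes the proof.

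The steps are all routine once Step 1 is in place; there is no real obstacle, since the hard bookkeeping has already been done in Proposition \ref{prop:21}. The only point requiring a moment of care is the ``only if'' direction of Step 3, where one must pass from a composite divisor witnessing an odd $\mu$ to a prime divisor witnessing one; this is handled by the divisibility $\mu(p_1)\mid\mu(p')$ above.
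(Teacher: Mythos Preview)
Your proof is correct and rests on the same key identity as the paper's, namely that $\tau^{j}(n)=n$ iff $p\mid (n+1)(2^{j}-1)$. The organization differs: you compute the exact cycle length of each $n$ as $\mu(p/\gcd(n+1,p))$ and then read off the set of all cycle lengths, whereas the paper argues each direction directly (for the ``if'' part it exhibits the fixed point $m-1$ of $\tau^{\mu(p_1)}$; for the ``only if'' part it factors $p$ using $n+1<p$). Your approach is a bit more informative, since it gives all cycle lengths at once, but both are short and elementary.

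One small remark: the statement does not require $p_1$ to be prime (the paper's own ``only if'' produces a possibly composite $p_1$), so your Step~3 is not strictly necessary. It is harmless, of course, and in fact proves the slightly sharper fact that $p_1$ may always be taken prime.
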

\begin{proof} 
{\it The `if' part.} If $p=mp_1$ for some $m\geq 1$, then $p|m(2^{\mu(p_1)}-1)$. Hence, $\tau^{\mu(p_1)}(m-1)=m-1$. Since $\mu(p_1)$ is odd, there is an odd cycle in the cycle decomposition of $\tau$.

{\it The `only if' part.} If there is an odd cycle in the cycle decomposition of $\tau$, then there exist $m\geq 1$ and $n\in\{0,1,\dots,p-2\}$ such that $\tau^{m}(n)=n$. So $p|(n+1)(2^{m}-1).$ Note that $n+1\leq p-1$. Hence, there exist $p_1\geq2$ and $p_2\geq1$ such that $p=p_1p_2$ and $p_1|(2^{m}-1)$. Since  $p_1|(2^{\mu(p_1)}-1)$, we have $\mu(p_1)|m$. Hence, $\mu(p_1)$ is odd which completes this proof.
\end{proof}
\begin{remark}
Let $p$ be odd and $N_p$ be the number of apwenian series satisfying \eqref{eq:fp}.  Then, Propositions \ref{prop:21} and \ref{prop:4} imply that 
$$N_p=\begin{cases}
    0,  & \text{if $p=mp_1$ for some $m\geq1$ and $p_1\geq 3$ with $\mu(p_1)$ odd }, \\
    2^k, & \text{otherwise},
\end{cases}$$
where $k=\frac{1}{\mu(p)}\sum_{j=0}^{\mu(p)-1}\gcd(2^{j}-1,p) -1.$
 \end{remark}
 
% >>>

\section{The \texorpdfstring{$\pm 1$}{+-1}  apwenian sequences generated by substitutions of constant length} \label{sec:gen} % <<<
In this section, we consider all substitutions of constant length. We show that the only substitutions that have a chance to generate $\pm 1$ apwenian sequences are type II substitutions (see Theorem \ref{thm:general}).

A substitution $\sigma$ (of constant length)  on $\{-1,1\}$ is of the form 
\[\sigma:~1\mapsto v_0v_1\cdots v_{p-1},\quad -1\mapsto w_0w_1\cdots w_{p-1}\]
where $p\ge 2$ is an integer, and $v_i$, $w_i\in\{-1,1\}$ for $i=0,1,\dots,p-1$. To ensure that $\sigma$ has a fixed point, we need $v_0=1$ or $w_0=-1$. Let $\mathbf{d}=\sigma^{\infty}(1)$. In addition, we can assume that $d_0=1$ (which also means $v_0=1$). Otherwise, we investigate $-\mathbf{d}$. Define
\[A:=\left\{j\in [0,p-1]  :  v_j=w_j\right\}.\]
 Then $\mathbf{d}$ satisfies the recurrence relations
\begin{equation}
d_0=1 \text{ and } \forall~ n\geq 0,\,
\begin{cases}
d_{np+j}=v_j, & \text{if }j\in A,\\
d_{np+j}=v_jd_n, & \text{otherwise}.
\end{cases} \label{eq:rec-g}
\end{equation}
Note that
\begin{itemize}
\item if $j\not\in A$, then $w_j=\bar{v}_j$;
\item if $\# A = 0$, then $\sigma$ is a type II substitution;
\item if $\# A = p$, then $\mathbf{d}$ is periodic.
\end{itemize}
In fact, our next result shows that substitutions of constant length, which are not type II, have no apwenian fixed point.

\begin{theorem}\label{thm:general}
Let $\mathbf{d}\in\{-1,1\}^{\infty}$ be given by \eqref{eq:rec-g}. If $\mathbf{d}$ is apwenian, then $\# A=0$.
\end{theorem}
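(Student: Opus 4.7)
The plan is to apply the $\pm 1$ apwenian criterion (Theorem \ref{thm:A}) in its multiplicative form, namely that $\mathbf{d}$ is apwenian if and only if $d_n d_{n+1} d_{2n+1} d_{2n+2} = -1$ for every $n \geq 0$, and to feed it the substitution structure \eqref{eq:rec-g}. Setting $\eta(j) := 1 - [j \in A] \in \{0, 1\}$, the recurrences \eqref{eq:rec-g} take the uniform shape $d_{Np+j} = v_j\, d_N^{\eta(j)}$ (with the convention $d_N^{0} = 1$). Substituting $n = Np + j$ therefore produces, for each $j \in [0, p-1]$ and every $N \geq 0$, an identity
\[
C_j \cdot d_N^{\alpha}\, d_{N+1}^{\beta}\, d_{2N}^{\gamma}\, d_{2N+1}^{\delta}\, d_{2N+2}^{\epsilon} \;=\; -1,
\]
where $C_j$ is a monomial in the $v_i$'s and the exponents $(\alpha, \beta, \gamma, \delta, \epsilon) \in \{0, 1\}^5$ are determined by the $A$-membership of $j, j+1, 2j+1 \bmod p, 2j+2 \bmod p$, with the boundary variables $d_{N+1}, d_{2N+2}$ activated only when $j = p-1$. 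The $n = 0$ instance already forces $d_2 = -1$, so $\mathbf{d}$ takes both values and no non-trivial monomial in the $d_M$'s can be constant as $M$ varies over $\mathbb{N}$.

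Suppose for contradiction that $A \neq \emptyset$. The dichotomy for each such parametric identity is then: either \emph{all} of $\alpha, \beta, \gamma, \delta, \epsilon$ vanish, yielding only a constant constraint on the $v_i$'s; or \emph{exactly one} of them equals $1$, which forces some $d_M$ to be constant in $M$ and contradicts $d_0 \neq d_2$; or \emph{two} of them equal $1$, in which case evaluating at $N = 0$ pins down $C_j \in \{-1, 1\}$ and imposes a functional relation on $\mathbf{d}$ such as $d_{2N} = d_N$ or $d_{2N+1} = -C_j d_N$ for all $N$. The extreme case $A = [0, p-1]$ is dismissed at the outset: then $\sigma(1) = \sigma(-1)$, the sequence $\mathbf{d}$ is $p$-periodic, and $H_n(\mathbf{d}) = 0$ for $n > p$, contradicting apwenian. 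The base case $p = 2$ is finished by direct check: $A = \{0\}$ and $A = \{0, 1\}$ fail at $n = 0$ because $d_2 = v_0 = 1$, while $A = \{1\}$ gives $\sigma(1) = 1, -1$ and $\sigma(-1) = -1, -1$, which fails apwenian at $n = 1$.

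For $p \geq 3$, the plan is to choose $j_0 := \min A$ and apply the criterion at $n = Np + j_0 - 1$ (or at $n = Np - 1$ when $j_0 = 0$, using that $d_{Np} = v_0 = 1$ is constant) to land in the one- or two-exponent case. A one-exponent case is already contradictory, so the enforced functional relation from a two-exponent case must be propagated: combining $d_{2N} = d_N$ with apwenian at $n = N$ (where the substitution $d_{2N+2} = d_{N+1}$ collapses the product) yields $d_{2N+1} = -d_N$, so $\mathbf{d}$ must be the Thue-Morse sequence over $\{-1, 1\}$; the parallel derivation from $d_{2N+1} = -C_j d_N$ either produces Thue-Morse (when $C_j = 1$) or a $2$-automatic sequence that is not the fixed point of any constant-length substitution (its block-structure fails the uniformity imposed by $\mathbf{d} = \sigma(\mathbf{d})$). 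The punchline is that Thue-Morse is a fixed point of a constant-length substitution only for lengths $p = 2^k$, and in that range the unique such substitution is the $k$-fold iterate of the standard Thue-Morse substitution, which is of type II and has $A = \emptyset$, contradicting $A \neq \emptyset$. The main obstacle is the size of the case analysis: within each $j$ there are four sub-cases from $(j, j+1)$ crossed with four from $(2j+1, 2j+2) \bmod p$, with boundary issues at $j = p-1$ and at $j = p/2 - 1$ when $p$ is even. This bookkeeping can be streamlined by passing to the $0$-$1$ projection $\tilde{\mathbf{d}}$ via Lemma \ref{lem:01+-1} and using the simpler criterion \eqref{eq:01}, whose propagation along the block structure inherited from \eqref{eq:rec-g} essentially parallels the analysis carried out in Section \ref{sec:01} for type I substitutions.
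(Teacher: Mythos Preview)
Your approach is essentially the paper's own: translate Theorem \ref{thm:A} into the multiplicative identity $d_nd_{n+1}d_{2n+1}d_{2n+2}=-1$, feed in the recurrence \eqref{eq:rec-g}, and show that a non-empty $A$ forces either a constant sequence or a relation $d_{2N}=\pm d_N$, $d_{2N+1}=\pm d_N$, which is then dispatched via Lemma~\ref{lem:x4}. The paper organises this as Lemmas \ref{lem:0-A}, \ref{lem:j->j+1}, \ref{lem:j->j-1}; your ``take $j_0=\min A$ and look at $n=Np+j_0-1$'' is exactly the contrapositive of Lemma~\ref{lem:j->j+1}.

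There is, however, a real gap. Your ``dichotomy'' (at most two of $\alpha,\beta,\gamma,\delta,\epsilon$ are nonzero) is false at the boundary $j_0-1=\tfrac{p}{2}-1$ when $p$ is even. In that case $2j_0-1=p-1$ and $2j_0=p$, so $d_{2n+1}$ lives in block $2N$ while $d_{2n+2}$ lives in block $2N+1$, and you obtain
\[
C\cdot d_N\, d_{2N}^{\,\eta(p-1)}\, d_{2N+1}^{\,\eta(0)}=-1.
\]
Since $0\notin A$ (by minimality of $j_0=\tfrac p2>0$) we have $\eta(0)=1$; if in addition $p-1\notin A$ then $\eta(p-1)=1$ and three variables survive. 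The relation $d_Nd_{2N}d_{2N+1}=C$ is not covered by Lemma~\ref{lem:x4}, and nothing in your sketch handles it. The paper avoids this precisely by running the induction from \emph{both} ends: Lemma~\ref{lem:0-A} secures $0\notin A$ and $p-1\notin A$, then Lemma~\ref{lem:j->j+1} pushes forward up to $\tfrac p2-1$ while Lemma~\ref{lem:j->j-1} pushes backward down to $\tfrac p2$, so neither chain ever has to cross the problematic index. Your $\min A$ strategy alone cannot do this; you need $\max A$ as well (or the equivalent two-sided argument).

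A second, smaller gap: your treatment of the case $d_{2N+1}=d_N$ (``a $2$-automatic sequence whose block-structure fails the uniformity imposed by $\mathbf d=\sigma(\mathbf d)$'') is a hand-wave. The paper's Lemma~\ref{lem:x4}(ii) gives a concrete contradiction: from $d_{2^kp}$ and $d_{2^kp+1}$ one reads off $0,1\notin A$ and $v_1=-v_0=-1$, whence $d_1=v_1d_0=-1$, contradicting $d_1=d_0=1$. You should supply this (or an equivalent) argument rather than appeal to an unproved structural claim. Similarly, the assertion that Thue--Morse forces $A=\emptyset$ needs the (easy but not stated) fact that no arithmetic subsequence of Thue--Morse is constant.
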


Our strategy to prove Theorem \ref{thm:general} is the following:
\begin{enumerate}
\item Show that $0\notin A$ and $(p-1)\notin A$ (Lemma \ref{lem:0-A});
\item Show by induction that every $j\notin A$. In the following diagrams, both $i \rightarrow j$ and $j\leftarrow i$ mean that $i\notin A$ implies $j\notin A.$
When $p$ is odd,  we prove the relation
\[\overbrace{0\to 1\to\cdots \to \left(\frac{p-1}{2}\right)\to\left(\frac{p+1}{2}\right)\to\cdots (p-2) \to (p-1)}^{\text{Lemma \ref{lem:j->j+1}}}.\] 
Then it is reduced to prove $0\notin A$.
When $p$ is even, we prove the relation
\[
\overbrace{0\to 1\to \cdots \to \left(\frac{p}{2}-1\right)}^{\text{Lemma \ref{lem:j->j+1}}} \,\,\nleftrightarrow\,\, \overbrace{\frac{p}{2}\leftarrow \cdots \leftarrow (p-2) \leftarrow (p-1)}^{\text{Lemma \ref{lem:j->j-1}}}.
\]
Then, in this case, we only need to show $0\notin A$ and $p-1\notin A$.
\end{enumerate}
To achieve this, we need some preparation.
\begin{lemma}\label{lem:x4}
Let $\mathbf{d}\in\{-1,1\}^{\infty}$ be given by \eqref{eq:rec-g} and $x\in\{-1,1\}$ is a constant. If $\mathbf{d}$ is apwenian and it is not the Thue-Morse sequence, then none of the following holds: \\
(i) $\forall\, n\geq 0$,  $d_{2n}=xd_{n}$;\\
(ii) $\forall\, n\geq 0$,  $d_{2n+1}=xd_{n}$.
%(iii) $\forall\, n\geq 0$,  $d_{2n+1}=x$；\\
%(iv) $\forall\, n\geq 0$,  $d_{2n}=x$.
\end{lemma}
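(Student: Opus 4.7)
The plan is to handle case (i) and case (ii) separately. In each case I first apply the $\pm 1$ criterion (Theorem \ref{thm:A}) to constrain $\mathbf{d}$, and only in the one subcase where the criterion alone is not enough do I appeal to the substitution hypothesis \eqref{eq:rec-g}.

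For case (i), setting $n=0$ in $d_{2n}=xd_n$ forces $x=1$, so $d_{2n}=d_n$. Substituting this into the criterion $\frac{d_n+d_{n+1}-d_{2n+1}-d_{2n+2}}{2}\equiv 1\pmod{2}$ collapses it to $\frac{d_n-d_{2n+1}}{2}\equiv 1\pmod{2}$, which forces $d_{2n+1}=-d_n$. Combined with $d_{2n}=d_n$ and $d_0=1$, this is exactly the Thue-Morse recursion, contradicting the assumption that $\mathbf{d}$ is not Thue-Morse.

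For case (ii), substituting $d_{2n+1}=xd_n$ into the criterion yields $\frac{(1-x)d_n+d_{n+1}-d_{2n+2}}{2}\equiv 1\pmod{2}$. When $x=-1$ a parity argument using that $d_n$ itself is odd forces $d_{n+1}=d_{2n+2}$; combined with $d_{2n+1}=-d_n$ and $d_0=1$, this again gives the Thue-Morse sequence. When $x=1$ the criterion instead forces $d_{2m}=-d_m$ for every $m\ge 1$, which together with $d_0=1$ and $d_{2n+1}=d_n$ \emph{uniquely} determines a specific sequence (beginning $1,1,-1,1,1,-1,-1,1,-1,\ldots$) that is plainly not Thue-Morse.

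The main obstacle is to rule out case (ii) with $x=1$, since there the criterion alone is satisfied and only the substitution structure can produce the contradiction. My plan is a short direct argument: assume $\mathbf{d}$ is the fixed point of some substitution $\sigma$ of length $p\ge 2$. The forced recurrences give $d_0=d_1=d_4=1$, so $\sigma(d_1)=\sigma(d_4)=\sigma(d_0)=\sigma(1)$, whence $d_{p+j}=d_j$ and $d_{4p+j}=d_j$ for $0\le j\le p-1$. If $p$ is even, the relations $d_p=1$ and $d_{p+1}=1$ contradict each other via the intrinsic recurrences, since the first forces $d_{p/2}=-1$ while the second forces $d_{p/2}=1$. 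If $p$ is odd, then $d_p=1$ gives $d_{2p}=-d_p=-1$ and hence $d_{4p+1}=d_{2p}=-1$ by the recurrences, contradicting $d_{4p+1}=d_1=1$. This closes case (ii) with $x=1$ and completes the proof.
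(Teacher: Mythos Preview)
Your argument is correct. Cases (i) and (ii) with $x=-1$ match the paper's proof essentially line for line; the only real divergence is in case (ii) with $x=1$. There the paper exploits the recursion \eqref{eq:rec-g} at the indices $2^{k}p$ and $2^{k}p+1$: since $d_{2^{k}}=(-1)^{k}$ oscillates with $k$, the paper deduces $0\notin A$, $1\notin A$ and then $v_1=-v_0$, contradicting $d_1=v_1d_0=1$. Your route is different and a bit cleaner: rather than touching the set $A$ at all, you use the purely combinatorial fact that $d_0=d_1=d_4=1$ forces $\sigma(d_1)=\sigma(d_4)=\sigma(1)$, hence $d_{p+j}=d_j$ and $d_{4p+j}=d_j$; then two short parity checks (on $d_p,d_{p+1}$ when $p$ is even, and on $d_{4p+1}$ via $d_{2p}$ when $p$ is odd) finish the job. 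The paper's approach has the side benefit of isolating $0\notin A$ and $1\notin A$, which foreshadows Lemma~\ref{lem:0-A}, whereas your approach avoids the bookkeeping of $A$ entirely and makes the contradiction more self-contained.
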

\begin{proof}
(i) If for all $n\geq 0$, $d_{2n}=xd_{n}$, then by Theorem \ref{thm:A}, $\mathbf{d}$ is apwenian implies that
\begin{align*}
1 & \equiv \frac{d_{n}+d_{n+1}-d_{2n+1}-d_{2n+2}}{2} \\
	&\equiv \frac{d_{n}-d_{2n+1}}{2}+\frac{1-x}{2}d_{n+1}\\
 & \equiv \frac{d_{n}-d_{2n+1}}{2}+\frac{1-x}{2} ~(\mathrm{mod}~2).
\end{align*}
Hence, $d_{2n+1}=-xd_{n}$ for all $n\geq 0$. Since $d_0=xd_0$, we see $x=1$. Therefore, $\mathbf{d}$ is the Thue-Morse sequence starting with $d_0$.

(ii) If for all $n\geq 0$, $d_{2n+1}=xd_{n}$, then by Theorem \ref{thm:A}, $\mathbf{d}$ is apwenian
yields that $d_{2n+2}=-xd_{n+1}$ for all $n\geq 0$.
%implies that
%\begin{align*}
%1 & \equiv \frac{d_{n}+d_{n+1}-d_{2n+1}-d_{2n+2}}{2} \equiv \frac{d_{n+1}-d_{2n+2}}{2}+\frac{1-x}{2}d_{n}\\
% & \equiv \frac{d_{n+1}-d_{2n+2}}{2}+\frac{1-x}{2} ~(\mathrm{mod}~2).
%\end{align*}
%Hence, $d_{2n+2}=-xd_{n+1}$ for all $n\geq 0$.
If, in addition, $x=-1$, then $\mathbf{d}$ is the Thue-Morse sequence starting with~$1$. If $x=1$, then $d_1=d_0=1$ and for all $k\geq 1$,
\[d_{2^k}=(-1)^{k}d_{0},\quad d_{2^{k}p}=(-1)^{k}d_p,\quad d_{2^{k}p+1}=(-1)^{k-1}d_p.\]
By \eqref{eq:rec-g},
\begin{align*}
(-1)^{k}d_{p} = d_{2^{k}p} = \begin{cases}
v_0, & \text{ if }0\in A,\\
v_0d_{2^{k}}, & \text{ if }0\notin A
\end{cases} & \implies 0\notin A \text{ and }d_{p}=v_0d_0,\\
(-1)^{k-1}d_{p}  = d_{2^{k}p+1} = \begin{cases}
v_1, & \text{ if }1\in A,\\
v_1d_{2^{k}}, & \text{ if }1\notin A
\end{cases} & \implies 1\notin A \text{ and }d_{p}=-v_1d_0.
\end{align*}
So $v_1=-v_0=-1$. Using \eqref{eq:rec-g} again, we obtain $d_1=v_1d_0=-1$, which is a contradiction. \qedhere

\iffalse
(iii) If for all $n\geq 0$, $d_{2n+1}=x$, then it follows from Theorem \ref{thm:A} that for all $n\geq 0$,
\begin{equation}\label{eq:x-1}
\begin{cases}
d_{4n+1}=x,\\
d_{4n+2}=-d_{2n},\\
d_{4n+3}=x,\\
d_{4n+4}=-d_{2n+2}.
\end{cases}
\end{equation}
Moreover, $d_{2^{k+1}+2j}=(-1)^{k-1}$ for all $k\geq 0$ and $0\leq j < 2^{k}$. Hence, for all even $k$,
\[(d_n)_{n=2^{k+1}-3}^{2^{k+2}}=x,1,\underbrace{x,-1,x,-1,\dots,x,-1}_{\text{`}x,-1\text{'} \text{ repeats for } 2^{k} \text{ times}},x,1.\]
So there is a gap of length at least $2^{k+1}-1$ between (the above) two consecutive `$x,1$', which means $\mathbf{d}$ is not uniformly recurrent. Thus the substitution $\sigma$ is not primitive (see \cite[Theorem 10.9.5]{AS03}). This implies that either $v_0=v_1=\cdots=v_{p-1}=1$ or $w_0=w_1=\cdots=w_{p-1}=-1$. In the first case, we obtain $\mathbf{d}=11111\cdots$ which is not apwenian. In the later case, $\mathbf{d}$ has arbitrary long blocks of $-1$. Thus $x=-1$. If $p=2$, then $\mathbf{d}$ is the period doubling sequence on $\{-1,1\}$ which is not apwenian. If $p\geq 3$, then $d_2=-1$ yields that $v_2=w_2=-1$. By \eqref{eq:rec-g}, we have $d_{2p}=v_0d_2=-1$ and $d_{4p+2}=v_2=-1$. However, by \eqref{eq:x-1}, $d_{4p+2}=-d_{2p}$ which is a contradiction.

(iv) If for all $n\geq 0$, $d_{2n}=x$, then it follows from Theorem \ref{thm:A} that for all $n\geq 0$,
\[d_{4n}=d_{4n+2}=x,\quad d_{4n+1}=d_{4n+3}=-d_{2n+1}.\]
The rest of proof is similar as (iii).
\fi
\end{proof}

\begin{lemma}\label{lem:0-A}
Let $\mathbf{d}\in\{-1,1\}^{\infty}$ be given by \eqref{eq:rec-g}. If $\mathbf{d}$ is apwenian, then $0\notin A$ and $p-1\notin A$.
\end{lemma}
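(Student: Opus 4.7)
The plan is to argue by contradiction: assume $0 \in A$ or $p-1 \in A$ and derive a failure of the criterion from Theorem~\ref{thm:A}. The key test indices are $n = mp - 1$ for $m \ge 1$, because then $n+1 = mp$ and $2n+2 = 2mp$ are multiples of $p$, while $n$ and $2n+1$ both fall in the residue class $p-1$ modulo $p$. Consequently, all four entries appearing in the criterion reduce to expressions built from $v_0$, $v_{p-1}$ and from $d_{m-1}, d_m, d_{2m-1}, d_{2m}$, with the exact form depending only on whether $0$ and $p-1$ lie in $A$.

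Three non-trivial subcases arise. If $\{0, p-1\} \subseteq A$, the four entries collapse to the constants $v_{p-1}, 1, v_{p-1}, 1$, so the criterion fraction equals $0$, an immediate contradiction. If $0 \notin A$ but $p-1 \in A$, the criterion at $n = mp-1$ collapses to $d_{2m} = -d_m$ for every $m \ge 1$. Plugging this back into the general criterion forces $d_{2n+1} = d_n$ for $n \ge 1$, and the $n = 0$ instance of the criterion, combined with the derived $d_2 = -d_1$, pins down $d_1 = 1 = d_0$; so $d_{2n+1} = d_n$ for every $n \ge 0$, and Lemma~\ref{lem:x4}(ii) with $x = 1$ gives a direct contradiction.

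The more delicate subcase is $0 \in A$ and $p-1 \notin A$. Here the criterion at $n = mp - 1$ yields $d_{2k+1} = -d_k$ for every $k \ge 0$, and feeding this back into the general criterion adds $d_{2k} = d_k$ for every $k \ge 0$. By Lemma~\ref{lem:x4}(ii) with $x = -1$, this forces $\mathbf d$ to be the Thue--Morse sequence starting with $1$, but without an outright contradiction. One must then rule out the Thue--Morse sequence being a fixed point of $\sigma$ under the constraint $0 \in A$. The hypothesis $0 \in A$ rewrites as $d_{mp} = 1$, equivalently $s_2(mp)$ even, for every $m \ge 0$, where $s_2$ is the binary digit sum: when $p = 2^j$ this already fails at $m = 1$ since $s_2(2^j) = 1$, and when $p$ is not a power of $2$ Cobham's theorem implies that Thue--Morse cannot be $p$-automatic, contradicting its being a fixed point of $\sigma$.

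The main obstacle is precisely this Thue--Morse subcase: the relation $d_{2k+1} = -d_k$ alone is compatible with a genuine apwenian sequence, so one cannot close the argument entirely inside Lemma~\ref{lem:x4} and must invoke an external structural fact. An elementary alternative to Cobham is to track the forced values of $v_0, v_1, \dots$ through $d_j = v_j$ for $j < p$ together with the recursions $d_{2k} = d_k$ and $d_{2k+1} = -d_k$: for each concrete $p$ an explicit inconsistency appears at a small position (e.g.\ at $d_8$ for $p = 3$ and at $d_{14}$ for $p = 5$), but making the argument uniform in $p$ seems to require the digit-sum input used above.
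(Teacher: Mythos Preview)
Your proof is correct and shares the paper's core strategy: test the criterion of Theorem~\ref{thm:A} at $n = mp-1$ and split on which of $0,\,p-1$ lie in $A$. The difference is entirely in how the Thue--Morse exception is handled. The paper opens with the one-line remark that for the Thue--Morse sequence $A=\emptyset$ (equivalently: Thue--Morse is nonconstant along every arithmetic progression $\{np+j:n\ge 0\}$, a true fact the paper does not justify), and thereafter assumes $\mathbf d$ is not Thue--Morse, so that Lemma~\ref{lem:x4} applies directly in both mixed subcases and the two halves ($0\in A$ versus $p-1\in A$) are dispatched symmetrically. You instead let the Thue--Morse possibility surface explicitly in case~3 and eliminate it via Cobham's theorem (for $p$ not a power of $2$) together with the digit-sum check $s_2(2^j)=1$ (for $p=2^j$). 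This is heavier machinery, but it actually closes the gap the paper leaves open. One small point on case~2: Lemma~\ref{lem:x4}(ii) as stated carries the hypothesis that $\mathbf d$ is not Thue--Morse, so you should note that your derived relation $d_{2m}=-d_m$ for $m\ge 1$ already rules this out (since Thue--Morse satisfies $t_{2m}=t_m$); alternatively, the $x=1$ branch of the proof of Lemma~\ref{lem:x4}(ii) does not use that hypothesis.
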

\begin{proof}
Note that for the Thue-Morse sequence $A=\emptyset$. Therefore we may assume that $\mathbf{d}$ is not the Thue-Morse sequence. If $0\in A$, then by Theorem \ref{thm:A}, $\mathbf{d}$ is apwenian means for all $n\geq 0$,
\begin{align*}
1 & \equiv \frac{d_{np+p-1}+d_{(n+1)p}-d_{(2n+1)p+p-1}-d_{(2n+2)p}}{2}\quad (\mathrm{mod}~2)\\
& \equiv \begin{cases}
\frac{v_{p-1}+v_0-v_{p-1}-v_0}{2}, & \text{ if } p-1\in A\\
\frac{v_{p-1}d_{n}+v_0-v_{p-1}d_{2n+1}-v_0}{2}, & \text{ if }p-1\notin A,
\end{cases}\quad (\mathrm{mod}~2)\\
& \equiv
\begin{cases}
 0, & \text{ if } p-1\in A\\
\frac{d_{n}-d_{2n+1}}{2}, & \text{ if }p-1\notin A,
\end{cases}\quad (\mathrm{mod}~2).
\end{align*}
When $p-1\in A$, we have a contradiction $1\equiv 0$. In the latter case, we have $d_{2n+1}=-d_{n}$ for all $n\geq 0$. By Lemma \ref{lem:x4}, $\mathbf{d}$ is not apwenian which is also a contradiction. So $0\notin A$.

The same kind of argument as above shows that $p-1\notin A$.
\end{proof}

\begin{lemma}\label{lem:j->j+1}
Let $\mathbf{d}\in\{-1,1\}^{\infty}$ be given by \eqref{eq:rec-g}. If $\mathbf{d}$ is apwenian, then   $j\notin A$ and $j\neq\frac{p}{2}-1$ imply $j+1\notin A$.
\end{lemma}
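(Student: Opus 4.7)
I argue by contradiction: assume $j\notin A$, $j\neq \frac{p}{2}-1$, and, for a contradiction, $j+1\in A$. If $\mathbf{d}$ were the Thue-Morse sequence then $w_i=-v_i$ for every $i$, hence $A=\emptyset$, contradicting $j+1\in A$; so I may assume $\mathbf{d}$ is not Thue-Morse. Moreover $\mathbf{d}$ cannot be constant (otherwise $H_n(\mathbf{d})=0$ for $n\geq 2$), so $d_n$ takes both values $\pm 1$.

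Next I apply the $\pm 1$ criterion (Theorem~\ref{thm:A}) at every index $np+j$, $n\geq 0$. Using \eqref{eq:rec-g} together with $j\notin A$ and $j+1\in A$, the condition \eqref{eq:-01} becomes
\[
v_j\, d_n + v_{j+1} - d_{2np+2j+1} - d_{2np+2j+2} \equiv 2 \pmod{4} \qquad (n\geq 0).
\]
The plan is to expand the two remaining terms via \eqref{eq:rec-g}, a computation whose shape depends on where $2j+1$ and $2j+2$ sit relative to $p$. Three cases arise: (i) $j<\frac{p-1}{2}$, where $2j+1,2j+2\leq p-1$ both lie in the block of index $2n$; (ii) $p$ odd and $j=\frac{p-1}{2}$, where $2j+1,2j+2$ land at positions $0,1$ of the block of index $2n+1$, and I use $0\notin A$ (Lemma~\ref{lem:0-A}) to write $d_{(2n+1)p}=v_0 d_{2n+1}=d_{2n+1}$; (iii) $\frac{p-1}{2}<j\leq p-2$, where $2j+1-p$ and $2j+2-p$ lie in the block of index $2n+1$.

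In every case the congruence takes the form $v_j d_n + \alpha + \beta\, d_{2n} \equiv 2 \pmod 4$ in Case~(i), or $v_j d_n + \alpha + \beta\, d_{2n+1} \equiv 2 \pmod 4$ in Cases~(ii) and~(iii), where $\alpha,\beta$ are fixed integers (with $|\alpha|\leq 3$ and $|\beta|\leq 2$) determined by which of the relevant indices ($2j+1,2j+2$, their $p$-shifts, or $1$) lie in $A$. Since $d_n,d_{2n},d_{2n+1}\in\{-1,+1\}$, a direct check in each sub-case shows that for the congruence to hold simultaneously for every $n\geq 0$ one needs either $d_n$ constant (excluded) or a rigid relation $d_{2n}=\varepsilon\, d_n$ (Case~(i)) or $d_{2n+1}=\varepsilon\, d_n$ (Cases~(ii) and~(iii)) with $\varepsilon\in\{-1,1\}$, and this contradicts Lemma~\ref{lem:x4}.

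The main obstacle is the bookkeeping: Case~(i) splits into four sub-cases depending on whether each of $2j+1,2j+2$ lies in $A$; Case~(ii) splits into two (whether $1\in A$); Case~(iii) splits into four. Nevertheless the mod-$4$ arithmetic is elementary, since each term on the left-hand side is $\pm 1$, so its sum can take only five values ($\{-4,-2,0,2,4\}$), and the requirement $\equiv 2\pmod 4$ rules out all configurations except the rigid relations above.
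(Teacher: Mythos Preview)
Your proposal is correct and follows essentially the same approach as the paper: assume $j\notin A$, $j+1\in A$, apply the criterion \eqref{eq:-01} at $np+j$, expand the two trailing terms via \eqref{eq:rec-g}, and then do a case split according to membership in $A$, concluding either that $\mathbf d$ is constant or that $d_{2n}=\varepsilon d_n$ (resp.\ $d_{2n+1}=\varepsilon d_n$), which Lemma~\ref{lem:x4} rules out.

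Two minor differences worth noting. First, the paper uses only two outer cases, ``$2j+1<p-1$'' and ``$2j+1\ge p$'', each with four sub-cases; your Case~(ii) (odd $p$, $j=(p-1)/2$) is absorbed into the second case there, and the paper does not need to invoke $0\notin A$ (Lemma~\ref{lem:0-A}) since the four sub-cases already yield a contradiction regardless of whether $0\in A$. Your separate handling is correct but not necessary. Second, you explicitly exclude the Thue-Morse sequence at the outset (needed to apply Lemma~\ref{lem:x4}); the paper leaves this implicit, relying on the fact that $j+1\in A$ already forces $A\neq\emptyset$.
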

\begin{proof}
Suppose $j\notin A$. If $j+1\in A$, then by Theorem \ref{thm:A}, $\mathbf{d}$ is apwenian yields that for all $n\geq 0$,
\begin{equation}\label{eq:25-1}
1\equiv \frac{v_jd_n+v_{j+1}-d_{2np+2j+1}-d_{2np+2j+2}}{2} \quad (\mathrm{mod}~2).
\end{equation}
Note that $j\neq\frac{p}{2}-1$ implies either $2j+1<p-1$ or $2j+1\geq p$.  Hence, there are two cases.

(1) If $2j+1<p-1$, then $2j+2\leq p-1.$ There are four sub-cases.

{\it Case 1}: $2j+1\in A$ and $2j+2\in A$. In this case, $d_{2np+2j+1}=v_{2j+1}$ and  $d_{2np+2j+2}=v_{2j+2}$. By \eqref{eq:25-1}, for all $n\geq 0$,
\[1\equiv \frac{v_jd_n+v_{j+1}-v_{2j+1}-v_{2j+2}}{2} \quad (\mathrm{mod}~2).\]
So $\mathbf{d}$ is a constant sequence which is not apwenian.

{\it Case 2}: $2j+1\in A$ and $2j+2\notin A$.  In this case, $d_{2np+2j+1}=v_{2j+1}$ and  $d_{2np+2j+2}=v_{2j+2}d_{2n}$. By \eqref{eq:25-1}, for all $n\geq 0$,
\[1\equiv \frac{v_jd_n+v_{j+1}-v_{2j+1}-v_{2j+2}d_{2n}}{2} \quad (\mathrm{mod}~2).\]
We have $d_{2n}=xd_n$ for all $n\geq 0$ where $x\in\{-1,1\}$. By Lemma \ref{lem:x4}, $\mathbf{d}$ is not apwenian. 

{\it Case 3}: $2j+1\notin A$ and $2j+2\in A$. The same proof as in Case 2 works.

{\it Case 4}: $2j+1\notin A$ and $2j+2\notin A$. In this case, $d_{2np+2j+1}=v_{2j+1}d_{2n}$ and  $d_{2np+2j+2}=v_{2j+2}d_{2n}$.  By \eqref{eq:25-1}, for all $n\geq 0$,
\begin{align*}
1 & \equiv \frac{v_jd_n+v_{j+1}-v_{2j+1}d_{2n}-v_{2j+2}d_{2n}}{2} \\
& \equiv \frac{v_jd_n+v_{j+1}-v_{2j+1}-v_{2j+2}}{2} \quad (\mathrm{mod}~2).
\end{align*}
We see that $\mathbf{d}$ is a constant sequence which is not apwenian. 

(2) If $2j+1\geq p$, we will consider whether $2j+1-p,2j+2-p$ belong to $A$ or not. In this case, we have similar four cases. We omit the proof here.
\end{proof}

\begin{lemma}\label{lem:j->j-1}
Let $\mathbf{d}\in\{-1,1\}^{\infty}$ be given by \eqref{eq:rec-g}. If $\mathbf{d}$ is apwenian, then $j\notin A$ and $j\neq \frac{p}{2}$ imply $j-1\notin A$.
\end{lemma}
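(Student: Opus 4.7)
The plan is to mirror the argument of Lemma \ref{lem:j->j+1}, but applying the apwenian criterion (Theorem \ref{thm:A}) at index $np+j-1$ instead of $np+j$. Suppose for contradiction that $j\notin A$, $j\neq p/2$, yet $j-1\in A$. Then Theorem \ref{thm:A} gives, for all $n\ge 0$,
\begin{equation*}
1 \equiv \frac{d_{np+j-1}+d_{np+j}-d_{2np+2j-1}-d_{2np+2j}}{2}\pmod 2.
\end{equation*}
Using \eqref{eq:rec-g} and the assumptions, $d_{np+j-1}=v_{j-1}$ (constant in $n$) and $d_{np+j}=v_j d_n$. The hypothesis $j\neq p/2$ ensures $2j\neq p$, so the indices $2j-1$ and $2j$ lie entirely within a single block of the substitution image.

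The proof splits into two main cases. In \emph{Case 1}, $2j<p$, so $2j-1,2j\in[0,p-1]$ and the values $d_{2np+2j-1}$, $d_{2np+2j}$ depend on $d_{2n}$. In \emph{Case 2}, $2j>p$, so $2j-1-p,\,2j-p\in[0,p-1]$ and these values depend on $d_{2n+1}$. Within each case, split into four sub-cases according to whether each of $2j-1$ and $2j$ (suitably reduced mod $p$) lies in $A$. In every sub-case one obtains an identity holding for all $n$ of the shape
\begin{equation*}
    1 \equiv \frac{\alpha + v_j d_n - \beta - \gamma d_{\ast}}{2}\pmod 2,
\end{equation*}
where $\alpha,\beta\in\{-1,1\}$ are constants, $\gamma\in\{-1,0,1\}$, and $d_{\ast}$ is either $d_{2n}$ (Case 1) or $d_{2n+1}$ (Case 2), or else $\gamma$ is replaced by a constant.

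In the sub-cases where the right-hand side is actually constant in $n$ (i.e.\ no $d_{\ast}$ term or $\gamma$-coefficient vanishes), letting $d_n$ vary produces two values differing by $\pm v_j=\pm 1$, which cannot both be odd; alternatively the criterion forces $\mathbf{d}$ to be constant, which contradicts $\mathbf{d}$ being apwenian. In the remaining sub-cases, examining the four sign patterns of $v_j d_n$ versus $\gamma d_{\ast}$ shows the congruence can be satisfied for all $n$ only if $d_{2n}=xd_n$ for all $n$ (in Case 1) or $d_{2n+1}=xd_n$ for all $n$ (in Case 2), for some constant $x\in\{-1,1\}$. Lemma \ref{lem:x4}(i) or (ii) then rules these out, giving a contradiction and completing the proof.

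The main obstacle is the mixed sub-cases where exactly one of $2j-1,\,2j$ lies in $A$: there the congruence involves both $d_n$ and $d_{2n}$ (or $d_{2n+1}$) and a careful enumeration of the four sign patterns is needed to extract a clean relation of the type required by Lemma \ref{lem:x4}; the other sub-cases reduce either to the sequence being constant or to the ``difference equals $\pm v_j$'' trick already used in Lemma \ref{lem:j->j+1}.
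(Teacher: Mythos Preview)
Your proposal is correct and follows essentially the same route as the paper: assume $j-1\in A$ for contradiction, apply Theorem~\ref{thm:A} at $np+j-1$, split according to whether $2j<p$ or $2j>p$ and then into four sub-cases depending on membership of $2j-1$ and $2j$ (reduced mod $p$) in $A$, and conclude via either the constant-sequence argument or Lemma~\ref{lem:x4}. The paper handles Case~1 explicitly and omits the details of Case~2; you correctly note that Case~2 produces $d_{2n+1}=xd_n$ and invokes Lemma~\ref{lem:x4}(ii) rather than (i).
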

\begin{proof}
Suppose $j\notin A$. If $j-1\in A$, then by Theorem \ref{thm:A}, $\mathbf{d}$ is apwenian yields that for all $n\geq 0$,
\begin{equation}\label{eq:26-1}
1\equiv \frac{v_{j-1}+v_{j}d_n-d_{2np+2j-1}-d_{2np+2j}}{2} \quad (\mathrm{mod}~2).
\end{equation}

Note that $j\neq\frac{p}{2}$ implies either $2j-1<p-1$ or $2j-1\geq p$.  Hence, there are two cases.

(1) If $2j-1<p-1$, then $2j\leq p-1.$ There are four sub-cases.

{\it Case 1}: $2j-1\in A$ and $2j\in A$. By \eqref{eq:25-1}, for all $n\geq 0$,
\[1\equiv \frac{v_{j-1}+v_{j}d_n-v_{2j-1}-v_{2j}}{2} \quad (\mathrm{mod}~2).\]
So $\mathbf{d}$ is a constant sequence which is not apwenian.

{\it Case 2}: $2j-1\in A$ and $2j\notin A$. By \eqref{eq:25-1}, for all $n\geq 0$,
\[1\equiv \frac{v_{j-1}+v_{j}d_n-v_{2j-1}-v_{2j}d_{2n}}{2} \quad (\mathrm{mod}~2).\]
We have $d_{2n}=xd_n$ for all $n\geq 0$ where $x\in\{-1,1\}$. By Lemma \ref{lem:x4}, $\mathbf{d}$ is not apwenian.

{\it Case 3}: $2j-1\notin A$ and $2j\in A$. The same proof as in Case 2 works.

{\it Case 4}: $2j-1\notin A$ and $2j\notin A$. By \eqref{eq:25-1}, for all $n\geq 0$,
\begin{align*}
1 & \equiv \frac{v_{j-1}+v_{j}d_n-v_{2j-1}d_{2n}-v_{2j}d_{2n}}{2} \\
& \equiv \frac{v_{j-1}+v_{j}d_n-v_{2j-1}-v_{2j}}{2} \quad (\mathrm{mod}~2).
\end{align*}
We see that $\mathbf{d}$ is a constant sequence which is not apwenian. 

(2) If $2j+1\geq p$, we have similar discussion.  We omit the details here.
\end{proof}

Now we are ready to prove Theorems \ref{thm:general} and \ref{thm:B}.
\begin{proof}[Proof of Theorem \ref{thm:general}]
Suppose that $\mathbf{d}$ is apwenian. By Lemma \ref{lem:0-A}, we have $0\notin A$ and $p-1\notin A$. Then by Lemma \ref{lem:j->j+1}, $0\notin A$ gives that $j\notin A$ for all $j<\frac{p}{2}$. On the other hand, by Lemma \ref{lem:j->j-1}, it follows from $p-1\notin A$ that $j\notin A$ for all $j>\frac{p}{2}-1$.
\end{proof}

\begin{proof}[Proof of Theorem \ref{thm:B}]
	According to Theorem \ref{thm:general}, if $\mathbf{d}$ is apwenian, then 
	%in \eqref{eq:2nd-1}, $w_j=\bar{v}_j$ for $j=0,1,\dots,p-1$. So 
	$\mathbf{d}$ satisfies the recurrence relations for all $n\geq 0$ and for all $j=0,1,\dots,p-1$,
	\[d_{np+j}=v_jd_n.\]
	Therefore, 
	\begin{align*}
		f(z) & = \sum_{n=0}^{\infty}\bigl(d_{np}+d_{np+1}z+\cdots+d_{np+(p-1)}z^{p-1}\bigr)z^{np}\\
		& = \sum_{n=0}^{\infty}\bigl(v_0d_{n}+v_1d_{n}z+\cdots+v_{p-1}d_{n}z^{p-1}\bigr)z^{np}\\
		%& = \bigl(v_0+v_1z+\cdots+v_{p-1}z^{p-1}\bigr)\sum_{n=0}^{\infty}d_nz^{np}\\
		& = \bigl(v_0+v_1z+\cdots+v_{p-1}z^{p-1}\bigr)f(z^p),
	\end{align*}
	which implies that $f(z)$ satisfies \eqref{eq:fh}.
\end{proof}

% >>>

\section{Discussion and more examples}\label{sec:dis}
\subsection{Substitutions of non-constant length}
The question of finding apwenian sequences in fixed points of substitutions of constants length have been settled in the previous sections. It is natural to ask  the same question for fixed points of substitutions of non-constant length. A countable class of fixed points of substitutions of non-constant length are Sturmian.  Actually, there are uncountably many Sturmian sequences. The next result shows that they are not apwenian.

\begin{proposition}\label{th:Sturmian}
All Sturmian sequences on $\{-1,1\}$ or $\{0,1\}$ are not apwenian.
\end{proposition}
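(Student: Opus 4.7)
The plan is to apply the apwenian criteria (Theorems \ref{thm:E} and \ref{thm:A}) to obtain a linear relation between partial sums of the sequence, and then to pass to frequencies to force the density of $1$'s to be rational, contradicting the standard fact that Sturmian sequences have irrational letter frequencies.

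For the $0$-$1$ case, I would suppose $\mathbf{c}\in\{0,1\}^{\infty}$ is Sturmian and apwenian. Then $c_0=1$ and Theorem \ref{thm:E} gives $c_n\equiv c_{2n+1}+c_{2n+2}\pmod 2$ for all $n\geq 0$. Since $\mathbf{c}$ is Sturmian, exactly one of $00,11$ is a forbidden factor. If $00$ is forbidden, the parity condition forces $c_{2n+1}c_{2n+2}=11$ when $c_n=0$ and $c_{2n+1}c_{2n+2}\in\{01,10\}$ when $c_n=1$, so $c_{2n+1}+c_{2n+2}=2-c_n$. Setting $S_N:=c_0+\cdots+c_{N-1}$, summing over $n=0,\dots,N-1$ gives
\[
S_{2N+1}-1 \;=\; \sum_{k=1}^{2N} c_k \;=\; 2N-S_N,
\]
so $S_{2N+1}+S_N=2N+1$. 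Dividing by $N$ and using $S_N/N\to\alpha$ and $S_{2N+1}/N\to 2\alpha$ yields $3\alpha=2$, contradicting irrationality of $\alpha$. If instead $11$ is forbidden, the symmetric computation gives $S_{2N+1}=S_N+1$ and hence $\alpha=0$, again impossible since each letter must appear with positive (irrational) frequency in a Sturmian sequence.

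For the $\pm1$ case, I would pass to $e_n:=(1-d_n)/2\in\{0,1\}$, which is Sturmian on $\{0,1\}$; a direct substitution into Theorem \ref{thm:A} turns the $\pm1$ apwenian condition into
\[
e_n+e_{n+1}+e_{2n+1}+e_{2n+2}\;\equiv\;1 \pmod 2.
\]
If $00$ is forbidden in $\mathbf{e}$, both $e_n+e_{n+1}$ and $e_{2n+1}+e_{2n+2}$ lie in $\{1,2\}$, and the parity forces the unordered pair to be $\{1,2\}$, hence sum $3$; summing over $n=0,\dots,N-1$ and dividing by $N$ yields $S_{2N+1}^{(e)}+2S_N^{(e)}=3N+O(1)$, whence $4\alpha_e=3$, i.e.\ $\alpha_e=3/4$. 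If $11$ is forbidden in $\mathbf{e}$, the analogous computation gives $\alpha_e=1/4$. Both values are rational, again a contradiction.

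The main obstacle is keeping the case analysis clean: four sub-cases in total (two in each of the $0$-$1$ and $\pm1$ versions), each requiring one to correctly identify the factor forced by the apwenian condition together with the forbidden-double-letter constraint. Once that bookkeeping is in place, the counting and passage to the limit is routine, and the conclusion that $\alpha$ is forced to be rational is immediate.
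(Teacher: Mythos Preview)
Your proof is correct and proceeds by a genuinely different route from the paper's. You convert the parity criteria into \emph{exact} integer identities for pairs $c_{2n+1}+c_{2n+2}$ (respectively $e_n+e_{n+1}+e_{2n+1}+e_{2n+2}$) by combining the apwenian congruence with the forbidden-double-letter constraint, sum, and pass to the limit to force the letter frequency to a rational value; this contradicts the standard fact that a Sturmian sequence has irrational letter frequency. The paper instead exploits the balanced property directly: in each sub-case it constructs, from a short seed factor (e.g.\ $aba$ in the $\pm1$ case, or an arbitrarily long $0$-block in the $0$-$1$ type-$0$ case), two factors of the same length whose $a$-counts differ by at least $2$, violating balance.

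Both arguments rest on the same preliminary observation (the parity criterion plus the missing double letter pins down the exact value of the relevant two-term sum), but they diverge in how they derive the contradiction. Your frequency argument is uniform: all four sub-cases are dispatched by the same summation-and-limit template, and the only external input is the irrationality of the slope. The paper's argument is more hands-on and avoids any appeal to frequencies, using only the combinatorial definition of balance; the price is a slightly more bespoke case analysis (e.g.\ showing $010\in\mathcal{F}\Rightarrow 111\in\mathcal{F}$ in the $0$-$1$ type-$1$ case). Your approach generalises more readily, while the paper's is marginally more self-contained.
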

\begin{proof}
Let $\mathbf{d}$ be a Sturmian sequence on $\{a,b\}$ and $\mathcal{F}$ be the set of all subwords of $\mathbf{d}$.

{\it Case 1}: $\{a,b\}=\{1,-1\}$.
Let $\mathbf{d}$ be of type $a$. That is $aa\in\mathcal{F}$ and $bb\notin\mathcal{F}$. If $\mathbf{d}$ is apwenian, then by Theorem \ref{thm:A}, we have $\frac{d_n+d_{n+1}}{2}\equiv 1+\frac{d_{2n+1}+d_{2n+2}}{2}~(\mathrm{mod}~2)$ for all $n\geq0$. So 
\begin{align}
& d_nd_{n+1}=aa \text{ for some } n  \implies d_{2n+1}d_{2n+2}\in\{ab,ba\},\label{eq:aaaba}\\
& d_nd_{n+1}\in\{ab,ba\} \text{ for some } n  \implies d_{2n+1}d_{2n+2}=aa.\label{eq:aaabb}
\end{align}
Since $bb\notin\mathcal{F}$, we see $aba\in\mathcal{F}$. Therefore, $d_{j}d_{j+1}d_{j+2}=aba$ for some $j\ge 0$. By \eqref{eq:aaabb}, we have $d_{2j+1}d_{2j+2}d_{2j+3}d_{2j+4}=aaaa$. By \eqref{eq:aaaba}, we have $d_{4j+3}d_{4j+4}\in\{ab,ba\}$ and $d_{4j+5}d_{4j+6}\in\{ab,ba\}$. Hence, 
\[\big||d_{2j+1}d_{2j+2}d_{2j+3}d_{2j+4}|_a-|d_{4j+3}d_{4j+4}d_{4j+5}d_{4j+6}|_a\big|=2\]
which contradicts the fact that $\mathbf{d}$ is balanced. So $\mathbf{d}$ cannot be apwenian.

{\it Case 2}: $\{a,b\}=\{0,1\}$. If $\mathbf{d}$ is apwenian, then $d_0=1$ and by Theorem \ref{thm:E}, for all $n\geq 0$,
\begin{equation}\label{eq:stur-01}
d_n\equiv d_{2n+1}+d_{2n+2}\quad (\mathrm{mod}~2).
\end{equation}
When $\mathbf{d}$ is of type $0$, by \eqref{eq:stur-01}, $d_n=0$ implies that $d_{2n+1}d_{2n+2}=00$. Consequently, $0^m\in\mathcal{F}$ for any $m\geq1$. Since $\mathbf{d}$ is not (eventually) periodic and $11\notin\mathcal{F}$, there exists an $n_0\ge 1$ such that $10^{n_0}1\in\mathcal{F}$. Therefore, $0^{n_0+2},\,10^{n_0}1\in\mathcal{F}$ which contradicts the fact that $\mathbf{d}$ is balanced. 

When $\mathbf{d}$ is of type $1$, applying \eqref{eq:stur-01} for $n=0$ and $n=1$, we obtain that $d_0d_1d_2d_3d_4\in\{11010,10111\}$ which implies $010\in\mathcal{F}$ or $111\in\mathcal{F}$. If $010\in\mathcal{F}$, then there exists $n\geq0$ such that $d_nd_{n+1}d_{n+2}=010$. By \eqref{eq:stur-01}, this implies that $(d_{2n+i})_{i=1}^{6}\in\{111011,110111\}$. That is to say if $010\in\mathcal{F}$ then $111\in\mathcal{F}$ which contradicts the fact that $\mathbf{d}$ is balanced. Hence, $010\notin\mathcal{F}$. Similarly, $111\notin\mathcal{F}$. Therefore, $\mathbf{d}$ cannot be apwenian.
\end{proof}
The numerical experiment suggests that the fixed points of substitutions of non-constant length (on $\{0,1\}$ or $\{-1,1\}$) can not be apwenian. We propose the following conjecture.
\begin{conjecture}
	The fixed points of substitutions of non-constant length on $\{0,1\}$ or $\{-1,1\}$ can not be apwenian.
\end{conjecture}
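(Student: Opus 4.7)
My plan is to argue by contradiction. Suppose $\mathbf{d}$ is an apwenian fixed point of a substitution $\sigma$ with $|\sigma(1)|=p\neq q$, where $q$ is the length of the image of the other letter ($0$ in the $0$-$1$ case, $-1$ in the $\pm 1$ case). By replacing $\sigma$ with a suitable power and trimming a prefix I may assume $\sigma$ is primitive and $\mathbf{d}$ is not eventually periodic (eventually periodic apwenian sequences are classified by a direct Hankel computation, and none of them is a fixed point of a non-constant length substitution). I treat the two alphabets in parallel, since Lemma~\ref{lem:01+-1} relates apwenian sequences across them but does not translate substitutive structure.

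\textbf{Step 1: subword-frequency identities.} I apply the criterion in the form $c_n\equiv c_{2n+1}+c_{2n+2}\pmod 2$ (Theorem~\ref{thm:E}), and analogously Theorem~\ref{thm:A} in the $\pm 1$ case. Ces\`{a}ro averaging the criterion, and using that for primitive substitutive $\mathbf{c}$ the frequency of a fixed subword in any arithmetic subsequence equals its frequency in $\mathbf{c}$, gives the identity $\alpha=2\alpha_{01}$ where $\alpha$ is the frequency of $1$ and $\alpha_{01}$ of $01$. Iterating the relation to depth $k$ produces similar identities for subwords of length up to $2^{k+1}$. On the other hand, subword frequencies of any fixed point of $\sigma$ are determined by the incidence matrix $M$ and its dominant eigenvalue $\lambda$. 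The plan is to combine both families of identities to force $\lambda$ to be an integral power of $2$.

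\textbf{Step 2: rigidity in the remaining cases.} When $\lambda=2^k$ and $p\neq q$, I would invoke a rigidity argument. The apwenian relation is equivalent, via Lemma~\ref{lem:1-2}, to the mod-$2$ Mahler-type identity $1+x^2 f(x^2)\equiv xf^{\text{odd}}(x)+f^{\text{even}}(x)\pmod 2$, so $\mathbf{c}$ is encoded by a dyadic tree of choices. Simultaneously, $\mathbf{c}$ is built from block concatenations of $\sigma^k(1)$ and $\sigma^k(0)$, whose position boundaries $L_k=|\sigma^k(1)|$ obey an affine recursion in which $p-q$ appears. I would evaluate the apwenian recurrence $c_n\equiv c_{2n+1}+c_{2n+2}$ at $n=L_k-1$ and $n=L_k$, i.e.\ just across each leftmost $\sigma^k$-boundary, and compare with the value of $c_n$ dictated by the substitution structure: since $L_k\pmod{2^j}$ does not stabilize when $p\neq q$, the two determinations should eventually be forced to disagree in some residue class mod $2$.

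\textbf{Main obstacle.} The real difficulty is Step 2: eliminating non-constant length candidates whose incidence matrix happens to admit an integral power-of-$2$ Perron eigenvalue. The naive ``density must be $1/2$'' argument is \emph{not} sufficient, because the period-doubling sequence itself is apwenian, has $\lambda=2$, and has density $\alpha=2/3\neq 1/2$ of $1$'s with $\alpha_{11}=\alpha/2$. Consequently the frequency identities of Step 1 leave a nontrivial locus and one must resort either to the full mod-$2$ $J$-fraction structure from Theorem~\ref{thm:E} or to a Cobham-style automaticity-vs-substitutivity dichotomy. Making either of these rigorous uniformly over all non-constant length primitive substitutions is, presumably, exactly what kept the authors from removing the word ``conjecture''.
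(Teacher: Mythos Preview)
The statement you are addressing is a \emph{conjecture} in the paper, not a theorem: the authors offer no proof, only numerical evidence together with the special case of Sturmian sequences (Proposition~\ref{th:Sturmian}). There is therefore nothing in the paper to compare your attempt against, and your closing sentence correctly identifies that the authors themselves left this open.

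That said, your sketch has gaps beyond the one you acknowledge in Step~2. In Step~1 you invoke the principle that ``for primitive substitutive $\mathbf{c}$ the frequency of a fixed subword in any arithmetic subsequence equals its frequency in $\mathbf{c}$''. This is false: unique ergodicity controls averages along the full orbit, not along arithmetic subsequences, which correspond to a different dynamical system. The period-doubling sequence already shows the failure: $c_{2n}=1$ for every $n$, so the frequency of $1$ along even positions is $1$, not the global frequency $2/3$; likewise the frequency of the block $01$ at positions $(2n+1,2n+2)$ is $2/3$, not the global $\alpha_{01}=1/3$. Hence the Ces\`aro-averaged identity you write down does not follow from the criterion without an independent justification, and the higher-depth identities you plan to iterate are even more delicate.

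Your preliminary reduction is also not safe. Passing to a power of $\sigma$ never creates primitivity if $\sigma$ is not already primitive (the incidence matrix of $\sigma^k$ is $M^k$, which has a zero entry pattern at least as bad as $M$), and there exist non-primitive, non-constant-length substitutions whose fixed points are not eventually periodic, so you cannot simply discard that case. More seriously, ``trimming a prefix'' is illegitimate here: the apwenian property is defined via Hankel determinants anchored at index $0$ and is not shift-invariant (Theorems~\ref{thm:A} and~\ref{thm:E} visibly depend on the value $c_0$), so altering the prefix can destroy the very hypothesis you need. In short, even before the acknowledged obstacle in Step~2, neither the reduction nor Step~1 is in place; the argument does not currently go further than the paper's own statement of the conjecture.
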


\subsection{Hankel determinant of conjugate}
Let $\mathbf{d}\in\{-1,1\}^{\infty}$. We have seen that $\mathbf{d}$ is apwenian if and only if $-\mathbf{d}$ is apwenian. For the sequence $\mathbf{c}\in\{0,1\}^{\infty}$, apparently $-\mathbf{c}\not\in\{0,1\}^{\infty}$. Since $-\mathbf{d}=\bar{\mathbf{d}}$, we also would like to investigate the Hankel determinant of $\bar{\mathbf{c}}$. The following proposition gives a relation between $H_{n}(\mathbf{c})$ and $H_n(\bar{\mathbf{c}}),$ where $\bar{\mathbf{c}}$ is the conjugate of $\mathbf{c}$ defined by $\bar{\mathbf{c}}=(1-c_i)_{i\geq 0}$. 
\begin{proposition}
	Let $\mathbf{c}\in \{0,1\}^\infty$.
	Then,
$H_{n}(\mathbf{c})+H_n(\bar{\mathbf{c}})\equiv 1 ~ (\mathrm{mod}~2)$ for all $n\geq0$ if and only if the sequence $(c_i+c_{i+2})_{i\geq0}\pmod{2}$ is apwenian.
\end{proposition}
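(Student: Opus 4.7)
The plan is to prove the pointwise identity
\[
H_n(\mathbf{c}) + H_n(\bar{\mathbf{c}}) \equiv H_{n-1}(\tilde{\mathbf{c}}) \pmod{2} \qquad (n \geq 1),
\]
where $\tilde{c}_i \equiv c_i + c_{i+2} \pmod{2}$, under the convention $H_0(\tilde{\mathbf{c}}) = 1$. Once this is established, the proposition follows immediately: $\tilde{\mathbf{c}}$ being apwenian means $H_m(\tilde{\mathbf{c}}) \equiv 1 \pmod{2}$ for every $m \geq 1$, and since $H_0 \equiv 1$ trivially, this matches the condition $H_n(\mathbf{c}) + H_n(\bar{\mathbf{c}}) \equiv 1 \pmod{2}$ for all $n \geq 1$.

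To set up the identity, write $\bar{M}_n = J_n - M_n$, where $M_n = (c_{i+j})$ and $J_n = \mathbf{1}\mathbf{1}^T$ is the all-ones matrix. The matrix determinant lemma gives $\det(\bar{M}_n) = (-1)^n[\det(M_n) - \mathbf{1}^T \mathrm{adj}(M_n)\mathbf{1}]$, and reducing modulo $2$ yields $H_n(\mathbf{c}) + H_n(\bar{\mathbf{c}}) \equiv \mathbf{1}^T \mathrm{adj}(M_n)\mathbf{1} \pmod{2}$. A Schur-complement expansion further identifies this quantity, up to a sign, with $\det(B_n)$, where $B_n$ is the $(n+1)\times(n+1)$ bordered matrix with first row $(0,1,\ldots,1)$, first column $(0,1,\ldots,1)^T$, and principal $n\times n$ block $M_n$. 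Hence it suffices to show $\det(B_n) \equiv H_{n-1}(\tilde{\mathbf{c}}) \pmod{2}$.

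The core of the proof is a reduction of $B_n$ by elementary operations mod $2$. First, the row operations $R_i \to R_i + R_{i-2}$ for $i = 3,\ldots,n$ replace $R_i$ by $(0,\tilde{c}_{i-3},\ldots,\tilde{c}_{n+i-4})$, leaving only rows $1$ and $2$ nonzero in column $0$. Expanding along column $0$ and combining the two resulting $n\times n$ determinants by multilinearity in their differing row gives $\det(B_n) \equiv \det(P) \pmod{2}$, where $P$ is the $n\times n$ matrix whose first two rows are $\mathbf{1}^T$ and $\mathbf{b}^T = (c_0+c_1,\ldots,c_{n-1}+c_n)$, followed by the Hankel-type rows $(\tilde{c}_{k-2+j})$ for $k = 2,\ldots,n-1$. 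Next, the column operations $C_j \to C_j + C_{j-1}$ for $j = n,n-1,\ldots,2$ clear row $0$ to $(1,0,\ldots,0)$ and, using the telescoping identity $b_i + b_{i+1} \equiv \tilde{c}_i \pmod{2}$, convert the $\mathbf{b}$-row into $(b_0,\tilde{c}_0,\tilde{c}_1,\ldots,\tilde{c}_{n-2})$. Expanding along the cleared row $0$ produces an $(n-1)\times(n-1)$ matrix $\hat{P}$ which one recognizes as $\hat{P} = L\cdot H_{n-1}(\tilde{\mathbf{c}})$, with $L$ the unit lower-bidiagonal matrix satisfying $L_{ii} = L_{i,i-1} = 1$. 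Since $\det L = 1$, this gives $\det(\hat{P}) = H_{n-1}(\tilde{\mathbf{c}})$ as required.

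The main obstacle is the bookkeeping through these successive row and column operations. The punchline---the factorization $\hat{P} = L \cdot H_{n-1}(\tilde{\mathbf{c}})$---is not immediate; it becomes visible only after one reorganizes the entries via the telescoping $b_i + b_{i+1} \equiv \tilde{c}_i$, which bridges the adjacent-sum sequence $\mathbf{b}$ appearing after the row expansion and the Hankel matrix of $\tilde{\mathbf{c}}$.
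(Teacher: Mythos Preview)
Your argument is correct, and the key identity $H_n(\mathbf{c})+H_n(\bar{\mathbf{c}})\equiv H_{n-1}(\tilde{\mathbf{c}})\pmod 2$ is the same one the paper establishes. The routes differ, though. The paper works directly on the matrix $(1-c_{i+j})$: a round of column differences followed by a split of the last column by linearity yields at once the exact integer identity
\[
H_n(\bar{\mathbf{c}})=(-1)^nH_n(\mathbf{c})+(-1)^{n+1}H_{n-1}(\mathbf{v}),
\]
with no adjugate, no bordering, and no intermediate matrices $P$, $\hat P$. Your approach trades that directness for a more structural viewpoint---matrix determinant lemma, bordered matrix, Schur complement---and then performs row/column operations on the bordered matrix rather than on $\bar M_n$ itself; the $L$-factorisation at the end is a pleasant way to close the computation. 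Both arguments are short, but the paper's is shorter and delivers an integer identity rather than only a mod-$2$ congruence.

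Two small points of hygiene. First, the row operations $R_i\to R_i+R_{i-2}$ must be performed for $i=n,n-1,\dots,3$ (or simultaneously), not in increasing order as you wrote; otherwise $R_{i-2}$ is already altered when $i\ge 5$. Second, in ``$\hat P=L\cdot H_{n-1}(\tilde{\mathbf{c}})$'' the right-hand factor should be the Hankel \emph{matrix} of $\tilde{\mathbf{c}}$, not its determinant. Neither affects the validity of the argument.
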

\begin{proof}
Write $\mathbf{v}=v_0v_1v_2\cdots\in\{0,1\}^{\infty}$ and $v_i\equiv c_i+c_{i+2}~ (\mathrm{mod}~2)$ for all $i\geq 0$. The Hankel determinant of $\mathbf{c}$ and its conjugate are related in the following way:
for all $n\geq 2$, \[H_{n}(\mathbf{c})+H_n(\bar{\mathbf{c}})\equiv H_{n-1}(\mathbf{v}) \quad (\mathrm{mod}~2).\]
In fact, applying certain column operations and row operations, we have
\begin{align*}
H_n(\bar{\mathbf{c}}) & = \left|\begin{matrix}
1-c_0 & 1-c_1 & \cdots & 1-c_{n-1}\\
1-c_1 & 1-c_2 & \cdots & 1-c_{n}\\
\vdots & \vdots & \ddots & \vdots\\
1-c_{n-1} & 1-c_n & \cdots & 1-c_{2n-2}\\
\end{matrix}\right| = \left|\begin{matrix}
c_1-c_0 &  \cdots & c_{n-1}-c_{n-2} & 1-c_{n-1}\\
c_2-c_1 &  \cdots & c_n-c_{n-1} & 1-c_{n}\\
\vdots  & \cdots & \ddots  &  \vdots\\
c_n-c_{n-1}  & \cdots & c_{2n-2}-c_{2n-3} &  1-c_{2n-2}\\
\end{matrix}\right| \\
& = \left|\begin{matrix}
c_1-c_0 &  \cdots & c_{n-1}-c_{n-2} & -c_{n-1}\\
c_2-c_1 &  \cdots & c_n-c_{n-1} & -c_{n}\\
\vdots  & \cdots & \ddots  &  \vdots\\
c_n-c_{n-1}  & \cdots & c_{2n-2}-c_{2n-3} &  -c_{2n-2}\\
\end{matrix}\right| +
\left|\begin{matrix}
c_1-c_0 &  \cdots & c_{n-1}-c_{n-2} & 1\\
c_2-c_1 &  \cdots & c_n-c_{n-1} & 1\\
\vdots  & \cdots & \ddots  &  \vdots\\
c_n-c_{n-1}  & \cdots & c_{2n-2}-c_{2n-3} &  1\\
\end{matrix}\right| \\
& = \left|\begin{matrix}
-c_0 &  \cdots & -c_{n-2} & -c_{n-1}\\
-c_1 &  \cdots & -c_{n-1} & -c_{n}\\
\vdots  & \cdots & \ddots  &  \vdots\\
-c_{n-1}  & \cdots & -c_{2n-3} &  -c_{2n-2}\\
\end{matrix}\right| +
\left|\begin{matrix}
c_1-c_0 &  \cdots & c_{n-1}-c_{n-2} & 1\\
c_2+c_0 &  \cdots & c_n+c_{n-2} & 0\\
\vdots  & \cdots & \ddots  &  \vdots\\
c_n+c_{n-2}  & \cdots & c_{2n-2}+c_{2n-4} &  0\\
\end{matrix}\right| \\
& = (-1)^n H_{n}(\mathbf{c})+(-1)^{n+1}H_{n-1}(\mathbf{v}). \qedhere
\end{align*}
\end{proof}

\subsection{Examples}
We end this section by some examples, which contain sequences given by substitutions of non-constant length or sequences given by substitutions with projection.
From Theorem \ref{thm:E}, one can easily obtain $0$-$1$ apwenian sequences which are not of type I.
\begin{example}
Assume that $\mathbf{c}$ satisfies \eqref{eq:01}. If we take
	$$(c_{2n+1})_{n\geq 0}=(1,0,1,1,0,1,0,1,1,0,1,1,0,\dots)$$
	to be the Fibonacci sequence given by $1\mapsto 10$, $0\mapsto 1$, then by Theorem \ref{thm:E}, we see that
	$$\mathbf{c}=(1,1,0,0,1,1,1,1,1,0,1,1,0,0,1,1,0,\dots)$$
	is an apwenian sequence. Further, by Theorem \ref{thm:E}, any $0$-$1$ sequence $(c_{2n+1})_{n\geq 0}$ can be uniquely extended to an apwenian sequence $\mathbf{c}$.
\end{example}
Certainly, there are apwenian sequences generated by substitutions with projections.

\begin{example}
	Consider the Thue-Morse substitution \[1\mapsto 1\bar 1,\quad \bar 1\mapsto \bar 11,\] where $\bar 1:=-1.$ Its fixed point starting with $1$ is the Thue-Morse sequence $\mathbf{t}$. Now, we apply the morphism \[1\mapsto 11,\quad \bar 1\mapsto \bar 1\bar 1\] to $\mathbf{t}$,
and obtain the sequence
	$$\mathbf{d}=(1,1,\bar{1},\bar{1},\bar{1},\bar{1},1,1,\bar{1},\bar{1},1,1,1,1,\bar{1},\bar{1},\dots).$$
	Then $\mathbf{d}$ is a $\pm 1$ apwenian sequence and it can not be generated by a type II substitution. \end{example}
\begin{proof}
By the definition of $\mathbf{d}$, we know that for all $n\geq 0$,
\begin{equation}
d_{4n}=d_{4n+1}=d_{2n}\quad \text{ and }\quad d_{4n+2}=d_{4n+3}=-d_{2n}.\label{eq:nt2}
\end{equation}
Since $d_{n}\in\{-1,1\}$, using \eqref{eq:nt2}, one can check that $\mathbf{d}$ satisfies \eqref{eq:-01}. For instance, when $n=4k$, 
\[\frac{d_{4k}+d_{4k+1}-d_{8k+1}-d_{8k+2}}{2}=\frac{d_{4k}+d_{4k+1}-d_{4k}+d_{4k}}{2}=\frac{d_{2k}+d_{2k}}{2}=d_{2k}\equiv 1\pmod 2.\]
Hence, by Theorem \ref{thm:A}, $\mathbf{d}$ is $\pm 1$ apwenian. Then it follows from Theorem \ref{thm:t2}(2) that $\mathbf{d}$ can not be the fixed point of a type II substitution of even length. In the following, we show that $\mathbf{d}$ also can not be the fixed point of a type II substitution of odd length.

Suppose on the contrary that $\mathbf{d}$ is the fixed point of a type II substitution of odd length. Then there exist $p\geq 2$, $v_0,v_1,\dots,v_{p-1}\in\{-1,1\}$ such that for all $n\geq 0$ and $j=0,1,\dots, p-1$,
\begin{equation}\label{eq:nt1}
d_{np+j}=v_jd_n.
\end{equation}
Since $d_0=d_1=1$, by using \eqref{eq:nt1}, a direct computation shows that $v_0=v_1=1$. 
Set $p=2q+1$. Then by \eqref{eq:nt1},
\[d_{p^2}=v_0d_p=v_0^2d_1=1 \text{ and }d_{p^2+1}=v_1d_{p}=v_1v_0d_{1}=1.\]
By \eqref{eq:nt2}, we have
\[1=d_{p^2+1}=d_{4(q^2+q)+2}=-d_{2(q^2+q)}=-d_{4(q^2+q)+1}=-d_{p^2}=-1,\]
which is a contradiction.
\end{proof}

\begin{example}
Consider the substitution \[1\mapsto \bar 11,\, \bar1\mapsto 1\bar1.\] Although it has no fixed point, it yields an apwenian sequence in the following way. Let $\iota$ be a substitution on $\{a,1,-1\}$ given by
\[a\mapsto a1,\quad 1\mapsto \bar11,\quad \bar1\mapsto 1\bar1\]
and $\rho$ be a coding
\[a\mapsto 1,\quad 1\mapsto 1,\quad \bar1\mapsto \bar1.\]
Then
	$$\mathbf{d}=\rho(\iota^{\infty}(a)) = (1,1,\bar{1},1,1,\bar{1},\bar{1},1,\bar{1},1,1,\bar{1},1,\bar{1},\bar1,1,\dots)$$
	is apwenian. In fact, $d_0=1$ and for all $n\geq 0$, $d_{2n+1}=d_n,\,d_{2n+2}=-d_{n+1}$ which fulfill the criterion in Theorem \ref{thm:A}.
\end{example}

\section{Concluding remarks}
After we uploaded the first version of this paper to arXiv, J.-P. Allouche kindly informed us that the permutation $\tau$ (defined in subsection \ref{sec:tau}) has already been considered up to a small change of notation (see \cite[Proposition 3]{A83} and \cite[Eq.~5]{EKF00}). In \cite{A83} and \cite{EKF00}, the authors studied the permutation \[\tau': \begin{pmatrix}
	1 & 2 & \cdots & n & n+1 & n+2 & \cdots & 2n\\
	2 & 4 & \cdots & 2n & 1 & 3 & \cdots & 2n-1
\end{pmatrix}\]
and they showed that the number of cycles of $\tau'$ is $\sum\limits_{d|(2n+1),\, d\neq 1}\frac{\phi(d)}{\textrm{ord}_{d}(2)}$ which is equal to $k$ in Theorem \ref{thm:D}.

Quite recently, Allouche, Han and Niederreiter \cite{AHN20} found a connection between  $0$-$1$ apwenian sequences and sequences with perfect linear complexity profile (PLCP) which were defined more that thirty years ago in the study of measures of randomness for binary sequences. For details of PLCP sequences, see for example \cite{AHN20,Ni88,WM86} and references therein. 
\pagebreak[2]

\begin{center}
	\textsc{Acknowledgement}
\end{center}
%\section*{Acknowledgement}

We warmly thank the referees for very precise reports on the previous version of this paper. We appreciate J.-P. Allouche, D. Badziahin and J. Shallit for their valuables comments and suggestions.    

This work is supported by the Fundamental Research Funds for the Central Universities from HZAU (2662019PY022) and from SCUT (2020ZYGXZR041). W. Wu is supported by Guangdong Basic and Applied Basic Research Foundation (2021A1515010056) and Guangzhou Science and Technology program (202102020294).

\end{document}